\def\bkC{{\rm \kern.24em \vrule width.05em height1.4ex depth-.05ex 
\kern-.26em C}}
\def\bksC{{\rm \kern.24em \vrule width.05em height1ex depth-.05ex 
\kern-.26em C}}
\def\bkE{{\rm I\kern-.22em E}}
\def\bkH{{\rm I\kern-.22em H}}
\def\bkN{{\rm I\kern-.17em N}}
\def\bkQ{{\rm \kern.24em \vrule width.05em height1.4ex depth-.05ex 
\kern-.26em Q}}
\def\Q{\bkQ}
\def\bkR{{\rm I\kern-.17em R}}
\def\RR{\bkR}
\def\bkZ{{\rm Z\kern-.32em Z}}
\def\Z{\bkZ}
\def\bksZ{{\rm Z\kern-.22em Z}}
\def\tri{\mathcal{T}}
\DeclareMathOperator{\LST}{LST}
\theoremstyle{plain}
\newtheorem{thm}{Theorem}
\newtheorem*{thm*}{Theorem}
\newtheorem{lem}[thm]{Lemma}
\newtheorem*{lem*}{Lemma}
\newtheorem{cor}[thm]{Corollary}
\newtheorem*{cor*}{Corollary}
\newtheorem{pro}[thm]{Proposition}
\newtheorem*{pro*}{Proposition}
\newtheorem{rem}[thm]{Remark}
\newtheorem*{rem*}{Remark}
\newtheorem{defn}[thm]{Definition}
\newtheorem*{defn*}{Definition}
\newtheorem{rmk}[thm]{Remark}
\newtheorem*{rmk*}{Remark}
\newtheorem*{cla*}{Claim}
\begin{document}
\title{A new combinatorial class of 3--manifold triangulations} 
\author{Feng Luo and Stephan Tillmann}
\begin{abstract}
We define a new combinatorial class of triangulations of closed 3--manifolds, satisfying a weak version of 0--efficiency combined with a weak version of minimality, and study them using twisted squares. As an application, we obtain strong restrictions on the topology of a 3--manifold from the existence of non-smooth maxima of the volume function on the space of circle-valued angle structures.
\end{abstract}
\primaryclass{57M25, 57N10}
\keywords{3--manifold, triangulation, circle-valued angle structure}


\maketitle



\section{Introduction}

In computational topology, it is a difficult problem to certify that a given triangulation of a closed, irreducible, orientable 3--manifold is minimal. Here, the term \emph{triangulation} includes semi-simplicial or singular triangulations. The difficulty mainly stems from the fact that on the one hand, the current computer generated censuses are limited in size (see \cite{Mat2007, Regina}), and on the other hand, it is difficult to find good lower bounds for the minimal number of tetrahedra (see \cite{Mat2003, JRT1}). For many algorithms using 3--manifold triangulations, the 0--efficient triangulations due to Jaco and Rubinstein~\cite{JR} have been established as a suitable platform. However, certifying that a given triangulation is 0--efficient involves techniques from linear programming. 

This paper introduces a new class of triangulations satisfying a weak version of 0--efficiency combined with a weak version of minimality. The weak version of minimality (\emph{face-pair-reduced}) is that certain simplification moves, which can be determined from the 2--skeleton, are not possible. The weak version of 0--efficiency (\emph{face-generic})  puts mild restrictions on the combinatorics of the 2--skeleton.
We show that a 0--efficient or minimal triangulation of a closed, irreducible, orientable 3--manifold is face-generic unless the underlying manifold is one of $S^3,$ $\RR P^3,$ $L(3, 1),$ $L(4, 1),$ $L(5, 1)$ or $L(5,2).$ Moreover, if a triangulation is not face-generic and face-pair-reduced, then it can either be simplified to a face-generic, face-pair-reduced triangulation or it is easy to recognise which of the aforementioned six exceptions is the underlying manifold of the triangulation (Proposition~\ref{pro:algo to simplify triangulation}).

The new notions, face-pair-reduced and face-generic, are motivated by key combinatorial properties of the 2--skeleton of minimal and 0--efficient triangulations respectively. They are the subject of Section~\ref{sec:faces}. A new tool in the study of triangulations is introduced in Section~\ref{sec:squares}: the study of (possibly pinched or immersed) surfaces that have a cell structure consisting of up to two squares, each of which is mapped to a \emph{twisted square} in a tetrahedron. Twisted squares are also used to give a classification of the possible combinatorial types of the (possibly singular) 3--simplices in a face-pair-reduced, face-generic triangulation. In this paper, as is common usage, we call a singular 3--simplex in $M$ a tetrahedron.

We give a key application to highlight the utility of our techniques. Previous work of the first author~\cite{Luo-2009} defined and studied circle-valued angle structures on orientable, closed 3--manifolds using a volume function. It was shown that a smooth maximum of the volume function corresponds to a solution of the \emph{generalised Thurston equation}, whilst a non-smooth maximum corresponds to a \emph{cluster of three 2--quad type solutions} to Haken's normal surface equations. Such a cluster consists of three algebraic solutions with at most two non-zero quadrilateral coordinates, such that the set of their non-zero coordinates includes the three quadrilaterals supported by some tetrahedron.
The present paper shows that (under sufficiently strong hypotheses on the triangulation), the existence of a non-smooth maximum gives strong information about the topology of the manifold. The following is a consequence of the more technical Theorem~\ref{thm:cluster}, which is stated and proved in Section~\ref{sec:Clusters of three 2--quad type solutions}:

\begin{thm}\label{thm:main}
Suppose $M$ is a closed, orientable 3--manifold with a triangulation which is either minimal or (face-pair-reduced and face-generic). If there is a cluster of three 2--quad type solutions to Haken's normal surface equations, then either
\begin{enumerate}
\item $M$ is reducible, or
\item $M$ is toroidal, or
\item $M$ is Seifert fibred, or
\item $M$ contains a non-orientable surface of genus 3.
\end{enumerate}
\end{thm}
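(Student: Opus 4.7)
The plan is to deduce the theorem from the more technical Theorem~\ref{thm:cluster}, after a preliminary reduction that disposes of the minimal case. First, I would observe that any minimal triangulation is automatically face-pair-reduced, since each face-pair simplification move strictly decreases the tetrahedron count. Hence Proposition~\ref{pro:algo to simplify triangulation} applies to a minimal triangulation: either it is also face-generic, reducing to the second clause of the hypothesis, or the underlying manifold is one of $S^3$, $\RR P^3$, $L(3,1)$, $L(4,1)$, $L(5,1)$, or $L(5,2)$. All six exceptional manifolds are Seifert fibred, so conclusion (3) holds. The rest of the proof therefore assumes the triangulation is face-pair-reduced and face-generic, which is the content of Theorem~\ref{thm:cluster}.

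For that case, my plan is to interpret each of the three 2--quad type solutions $S_1, S_2, S_3$ as a (possibly pinched, possibly immersed) normal surface whose square faces are the twisted squares of Section~\ref{sec:squares}. Let $\sigma$ be the distinguished tetrahedron whose three quadrilateral types $q_1, q_2, q_3$ all appear among the non-zero coordinates of the cluster; the definition of a cluster allows various distributions of these across the $S_i$, but each $S_i$ has at most two non-zero quadrilateral coordinates, so the bookkeeping is finite. Using the classification of 3--simplices in face-pair-reduced, face-generic triangulations obtained via twisted squares, I would enumerate the finitely many ways the remaining non-zero quadrilaterals can glue to those in $\sigma$ across 2--faces of $\sigma$ while respecting Haken's matching equations. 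Face-genericity is used to exclude the degenerate face identifications that would produce spurious surface components, and face-pair-reducedness rules out the bigon configurations that would inflate the enumeration.

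Next, for each surviving configuration, I would compute the Euler characteristics and orientability of the $S_i$ from their twisted-square cell structure and analyse their embeddedness. I expect a dichotomy: some $S_i$ (passing to the orientation double cover if necessary) is an essential sphere, yielding reducibility; or some $S_i$ is an essential torus, yielding toroidality; or the union of the $S_i$ is compatible with a Seifert fibration of $M$; or some $S_i$ is a non-orientable surface of genus $3$, namely Dyck's surface. The Seifert conclusion would be deduced by showing that the complement of a regular neighbourhood of the embedded resolution of the cluster consists of $I$-bundles and solid tori that assemble into a global Seifert structure.

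The main obstacle I anticipate is the Seifert case: merely producing an incompressible surface is easy, but producing a genuine fibred structure requires reading off candidate fibre directions from the twisted-square adjacencies in a uniform way across several subcases, and then matching these against the annular pieces of the complement. The Dyck's surface alternative, being the residual case where the Euler characteristic contributions of the cluster cancel to $-1$ while the non-orientable cell structure is forced by the twisted square pattern, should then fall out after the sphere, torus, and Seifert possibilities have been either achieved or excluded.
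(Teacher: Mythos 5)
Your first paragraph is essentially the paper's reduction (modulo the caveat that minimal triangulations are only known to be face-pair-reduced and face-generic when $M$ is irreducible and there are at least $3$ tetrahedra, so you should first dispose of reducible $M$ via conclusion (1) and of the small manifolds, all of which are Seifert fibred). The problem is what comes after. You announce that you will deduce the theorem from Theorem~\ref{thm:cluster}, but your sketch never actually uses that theorem: Theorem~\ref{thm:cluster} asserts that under five explicit hypotheses (no Klein squares; no two-square or three-square configurations as in Figures~\ref{fig:immersed squares} and \ref{fig:square-triple}; no capped 2--square surface that is a non-separating torus or non-orientable) there is \emph{no} cluster. The intended deduction is pure contraposition: a cluster forces one of the five hypotheses to fail, and each failure hands you an explicit embedded surface --- a Klein bottle, a projective plane, Dyck's surface $P^2\# P^2\# P^2$, or a non-separating torus --- from which the four alternatives follow by standard 3--manifold topology (e.g.\ an irreducible, atoroidal $M$ containing an embedded Klein bottle is Seifert fibred; a projective plane in an irreducible $M$ forces $M=\RR P^3$). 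Instead you propose to redo a case analysis from scratch, which amounts to reproving Theorem~\ref{thm:cluster} with extra baggage.

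The concrete gap is your Seifert alternative. You plan to obtain it by showing that ``the complement of a regular neighbourhood of the embedded resolution of the cluster consists of $I$-bundles and solid tori that assemble into a global Seifert structure,'' and you yourself flag this as the main obstacle. Nothing in the combinatorics of 2--quad type solutions gives you control over the complement of these surfaces, and no such fibration is constructed anywhere in the argument; the Seifert conclusion is never obtained this way. It arises only indirectly, from the presence of a one-sided surface (Klein bottle or $P^2$) in an irreducible atoroidal manifold, or because $M$ is one of the exceptional lens spaces. Relatedly, your dichotomy beginning with ``some $S_i$ is an essential sphere, yielding reducibility'' misreads the logic: reducibility is alternative (1) precisely so that it can be assumed away at the outset, and 2--quad type solutions are algebraic solutions (with possibly negative coordinates) rather than embedded normal surfaces, so no sphere is ever extracted from the cluster. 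As written, the proposal leaves the Seifert case unproved and therefore does not establish the theorem.
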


A discussion of this theorem and its consequences can be found in \cite{Luo-2011} (\S1 and \S4.3), where it was first announced for minimal triangulations. In particular, Thurston's orbifold theorem implies that if $M$ does not fall in the first three cases and contains a non-orientable surface of genus 3, then $M$ satisfies Thurston's Geometrisation Conjecture without appeal to Perelman's Ricci flow techniques. The proof of the theorem shows that, for a 3--manifold with face-pair-reduced and face-generic triangulation, it is possible to deduce from the combinatorics of the triangulation which of the (not mutually exclusive) cases occurs (see Lemma~\ref{lem:1-quad type}, Proposition~\ref{pro:2-quad type in 1 tet} and Theorem~\ref{thm:cluster}). As pointed out by the referee, it is worth mentioning that the main application of this paper can be viewed as a step towards realising a variant of Casson's program to prove the Geometrisation Theorem for 3--manifolds. Casson's program originally aimed to reprove Thurston's theorem via ideal triangulations in the case of 3--manifolds with boundary consisting of tori, and this program was later extended to closed 3--manifolds by the first author.

We have developed the new tools and notions for triangulations with a view towards keeping this paper brief, and the details at bay. They can be modified for the study of reducible 3--manifolds. Moreover, there is much scope in developing a full theory of face-pair-reduced and face-generic triangulations of irreducible 3--manifolds as they share many combinatorial properties with 0--efficient and minimal triangulations. This would, in particular, give new information about minimal triangulations of closed hyperbolic 3--manifolds. These remain elusive at the time of writing: they are sparse in the current censuses and no infinite families are known.
  
\textbf{Acknowledgements} 
The first author is partially supported by the USA National Science Foundation (project numbers DMS 1105808 and DMS 1222663). The second author is partially supported under the Australian Research Council's Discovery funding scheme (project number DP110101104), and thanks the Max Planck Institute for Mathematics, where parts of this work have been carried out, for its hospitality. The authors would like to thank the referee for constructive feedback.
  

\newpage

\section{Faces in triangulations}
\label{sec:faces}

The properties of triangulations that we will need are a weak version of 0--efficiency combined with a weak version of minimality. The weak version of minimality (\emph{face-pair-reduced}) is that certain simplification moves, which can be determined from the 2--skeleton, are not possible. The weak version of 0--efficiency (\emph{face-generic}) puts mild restrictions on the combinatorics of the 2--skeleton. 


\subsection{Triangulations and normal surfaces}

We will assume that the reader is familiar with standard definitions, facts and notions about triangulations of 3--manifolds and normal surfaces. 
Our main references are \cite{JR} for triangulations of 3--manifolds and \cite{tillus-normal} for normal surface theory. A triangulation of a compact, orientable 3--manifold $M$ consists of a union of pairwise disjoint 3--simplices, $\widetilde{\Delta},$ a set of face pairings, $\Phi,$ and a natural quotient map $p\co \widetilde{\Delta} \to \widetilde{\Delta} / \Phi = M.$ Since the quotient map is injective on the interior of each 3--simplex, we refer to the image of a 3--simplex in $M$ as a \emph{tetrahedron} and to its faces, edges and vertices with respect to the pre-image. Similarly for images of 2-- and 1--simplices, which will be referred to as \emph{faces} and \emph{edges} in $M.$ 

A triangulation of the closed, orientable, connected 3--manifold $M$ is \emph{minimal} if $M$ has no other triangulation with fewer tetrahedra. A triangulation of $M$ is  \emph{0--efficient} if the only embedded, normal 2--spheres are vertex linking (see \cite{JR} or Section~\ref{sec:Extremal rays with support in one tetrahedron} for a definition of normal surface).


\subsection{Small triangulations}

If the closed, irreducible, orientable 3--manifold $M$ has a (not necessarily minimal) triangulation with just one or two tetrahedra, then $M$ is one of the lens spaces $S^3,$ $\RR P^3,$ $L(3,1),$ $L(4,1),$ $L(5, 1),$ $L(5, 2),$ $L(7, 2),$ $L(8, 3)$ or quaternionic space $S^3/Q_8$ (see, for instance, the census in \cite{Regina}). 
Given such a small triangulation, one can recognise $M$ from the combinatorics of the triangulation or (up to the ambiguity $L(5, 1)$ versus $L(5, 2)$) from $H_1(M).$

A subset of the above list of manifolds of small complexity turns out to play a special role in the analysis of the combinatorics of the 2--skeleton of a triangulation.  For simplicity, we will write $L(p)$ for a lens space with fundamental group $\Z_p$; this mainly saves some space that would be taken up with distinguishing $L(5,1)$ from $L(5,2).$


\subsection{Faces in triangulations}
\label{subsec:Faces in triangulations}

\captionsetup[subfigure]{labelformat=empty}
\begin{figure}[h]
  \begin{center}
    \subfigure[triangle]{\includegraphics[height=2.1cm]{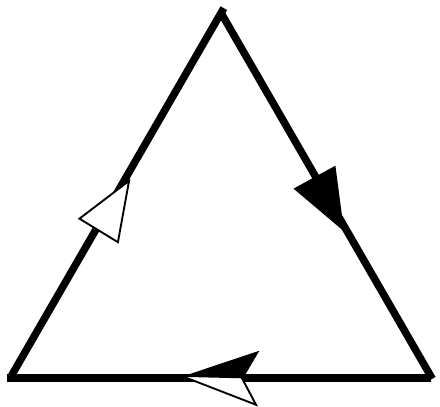}}
    \qquad
     \subfigure[cone]{\includegraphics[height=2.1cm]{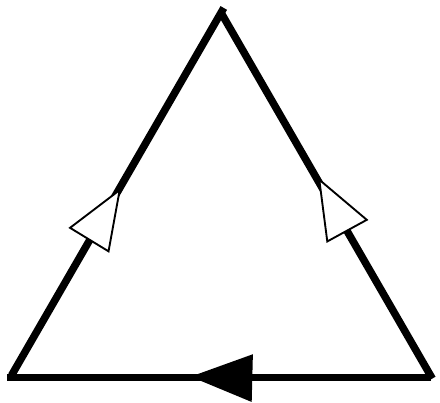}}
  \qquad
     \subfigure[M\"obius]{\includegraphics[height=2.1cm]{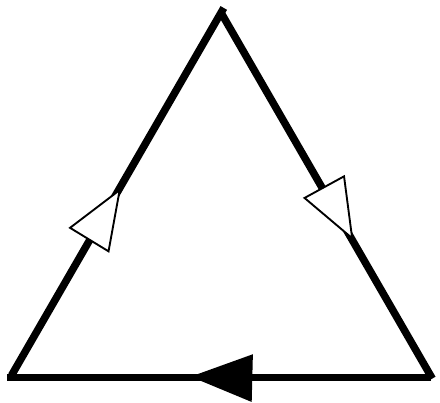}}
  \qquad
     \subfigure[3--fold]{\includegraphics[height=2.1cm]{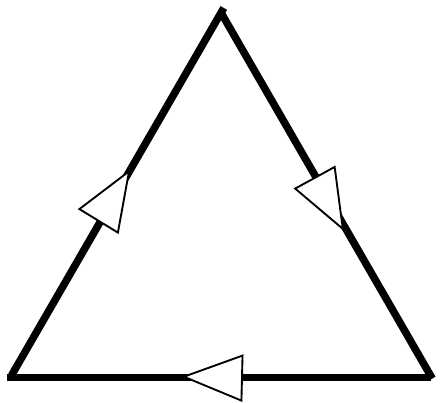}}
     \qquad
     \subfigure[dunce]{\includegraphics[height=2.1cm]{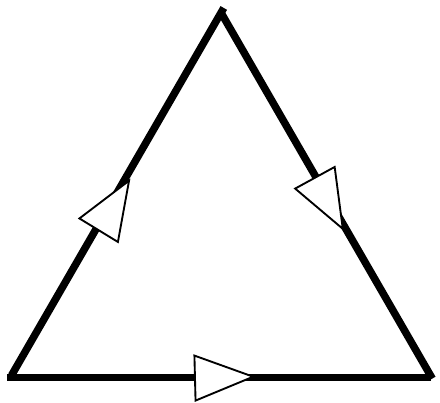}}
          \end{center}
   \caption{Types of faces. The M\"obius, 3--fold and dunce faces have only got one vertex due to the edge identifications. A cone face may have one or two vertices and a triangle face may have up to 3 vertices.}
   \label{fig:faces}
\end{figure}
\captionsetup[subfigure]{labelformat=default}

There are eight types of faces in a triangulation, depending on how vertices or edges are identified (see Figure 1 in \cite{JR}). Ignoring possible identification of vertices, this reduces to the following five types based on the edge identifications only (see Figure~\ref{fig:faces} below). A face with no edge-identifications is termed a \emph{triangle face}; it is a 2--simplex with some or all vertices identified. If a pair of edges is identified, the face is either a cone (possibly with the tip identified with a point on the boundary) and called a \emph{cone face} or it is a M\"obius band and called a \emph{M\"obius face}. In a M\"obius face, we distinguish the \emph{boundary edge} and the \emph{core edge}. If all three edges are identified, the face is either a 3--fold or a dunce hat, and called a \emph{3-fold face} or \emph{dunce face} respectively.

The following results concerning minimal, 0--efficient or arbitrary triangulations are well-known:

\begin{thm}[\cite{JR}, Theorem 6.1]
A minimal triangulation of the closed, irreducible, orientable 3--manifold $M$ is 0--efficient unless $M = \RR P^3$ or $L(3,1).$
\end{thm}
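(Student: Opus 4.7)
The plan is a proof by contradiction along the lines of the crushing arguments of Jaco--Rubinstein. Suppose $\mathcal{T}$ is a minimal triangulation of the closed, irreducible, orientable 3--manifold $M$ which fails to be 0--efficient. Then there exists an embedded normal 2--sphere $S\subset M$ which is not vertex-linking; I would choose $S$ with least weight (minimal number of intersections with the 1--skeleton of $\mathcal{T}$) among all such spheres. Because $M$ is irreducible, $S$ bounds a 3--ball $B\subset M$. The aim is to use $B$ to construct a new triangulation $\mathcal{T}'$ of $M$ with strictly fewer tetrahedra, contradicting minimality, except in carefully identified degenerate situations which force $M$ to be $\mathbb{RP}^3$ or $L(3,1)$.

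First I would exclude the easy case that $S$ consists only of triangle discs: a weight-minimising normal 2--sphere built only from triangles is, by a standard innermost argument using irreducibility together with the fact that triangle discs near a vertex link are parallel, isotopic (through normal surfaces of equal or smaller weight) to a vertex link. This would contradict the choice of $S$. Hence $S$ must contain at least one quadrilateral disc, and at least one tetrahedron $\sigma$ of $\mathcal{T}$ meets $B$ in more than a collar neighbourhood of a vertex link.

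The core step is the crushing operation: simultaneously collapse every component of $S$-parallel product region lying in $B$ to a single copy of $S$, and then collapse $B$ itself. I would verify, by tracking faces and edges case-by-case according to the combinatorial type of each tetrahedron's intersection with $S$, that the quotient is again a triangulation $\mathcal{T}'$ of the same manifold $M$ and that $|\mathcal{T}'|<|\mathcal{T}|$. Triangles of $S$ contribute only collapses of parallel slabs, while each quadrilateral of $S$ eliminates a tetrahedron. Provided at least one quadrilateral survives and the identifications produced are legitimate, one obtains the contradiction to minimality.

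The main obstacle, and the source of the two exceptions, will be the degenerate configurations in which the crushing collapses too much: the 3--ball $B$ may contain so few tetrahedra, or the edge identifications around $B$ may be so restrictive, that the crushed object is either not a triangulation or triangulates only a point. A detailed analysis of these minimal configurations --- essentially, $B$ built from a single tetrahedron with certain face identifications around $S$ --- shows that $M$ must then be a lens space with very small fundamental group, and matching against the small--complexity list in Section 2.2 narrows this down to exactly $\mathbb{RP}^3$ and $L(3,1)$. Verifying that these two manifolds do admit a minimal but non-0--efficient triangulation (each has a standard two-tetrahedron triangulation containing an essential normal 2--sphere) closes the argument and shows the exceptional list is sharp.
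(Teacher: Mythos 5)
This theorem is not proved in the paper under review; it is quoted from \cite{JR} (their Theorem 6.1), so your proposal must be measured against the argument there. You have correctly identified the Jaco--Rubinstein crushing strategy, but the two steps you defer are where essentially all of the content lies, and your account of the second is inaccurate. The assertion that the crushed object is again a triangulation of the \emph{same} manifold $M$ is the technical heart of \cite{JR}, \S4: collapsing $B$ together with the product blocks of its exterior can a priori identify edges to points, create non-manifold identifications, or change the topology, and taming this requires more than a least-weight choice of $S$ --- Jaco and Rubinstein work with a sphere bounding a \emph{maximal} ball, so that the exterior carries no further non-vertex-linking normal spheres and the block-by-block collapse can be certified. ``Provided the identifications produced are legitimate'' is precisely the statement to be proved, not a proviso one may assume.

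Your diagnosis of the exceptional cases is also off target. The crush does not fail because $B$ is ``built from a single tetrahedron''; it fails when the \emph{exterior} of $B$ --- which is where every surviving tetrahedron must have its truncated-tetrahedron core --- consists entirely of product blocks, so that the crushed complex is empty or degenerate. This occurs exactly when $S$ is the boundary of a regular neighbourhood of a spine of its exterior: a one-sided normal projective plane (whence $M=\RR P^3$; note that by Proposition~\ref{pro:0-efficient summary}(1) \emph{no} triangulation of $\RR P^3$ is 0--efficient, so for this manifold the obstruction is independent of minimality) or a 3--fold face (whence, combined with minimality, $M=L(3,1)$). Showing that no other spine configuration arises --- in particular that none of the other lens spaces with two-tetrahedron minimal triangulations is exceptional --- is a genuine case analysis that your sketch does not engage with; exhibiting the two bad triangulations only establishes sharpness, which is the easy direction. (A small terminological point: the offending normal spheres in those triangulations are not ``essential''; they bound balls, as every sphere in an irreducible manifold must. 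They are merely not vertex-linking.)
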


\begin{pro}\label{pro:0-efficient summary}
Suppose $M$ is a closed, orientable 3--manifold with a 0--efficient triangulation.
\begin{enumerate}
\item $M$ irreducible and not $\RR P^3.$ (\cite{JR} Proposition 5.1)
\item The triangulation has one vertex or $M=S^3$ and there are precisely 2 vertices. (\cite{JR} Proposition 5.1)
\item If an edge bounds an embedded disc, then $M=S^3.$ (\cite{JR}, Proposition 5.3)
\item No face is a cone unless $M=S^3.$ (\cite{JR}, Corollary 5.4)
\end{enumerate}
\end{pro}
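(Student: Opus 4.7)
The plan throughout is to apply the defining property of 0-efficiency---every embedded normal 2-sphere is vertex-linking---together with the standard normalisation procedure for embedded surfaces of non-negative Euler characteristic, as developed in \cite{tillus-normal} and \cite{JR}. Each of the four items then reduces to producing, from the hypothesised topological feature, an embedded non-vertex-linking normal 2-sphere.

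For (1), if $M$ were reducible I would normalise an essential embedded 2-sphere to obtain an embedded normal 2-sphere that is still non-trivial and therefore non-vertex-linking, contradicting 0-efficiency. Ruling out $M\cong\RR P^3$ requires a separate argument since $\RR P^3$ has no essential 2-spheres; the idea is that any triangulation of $\RR P^3$ contains an embedded one-sided normal projective plane (obtained by normalising an essential $\RR P^2$), and the boundary of a small regular neighbourhood of this projective plane is an embedded non-vertex-linking normal 2-sphere. For (2), first use (1) to conclude that $M$ is irreducible, so that each vertex link bounds a ball. If there are two distinct vertices, their links are disjoint embedded spheres each bounding a ball, and the two balls together cover $M$, forcing $M=S^3$. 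The additional bound of at most two vertices in the $S^3$ case follows from a cut-and-paste argument on three disjoint vertex-linking spheres, or equivalently from an edge/face count of a 0-efficient triangulation of $S^3$.

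For (3), if an edge $e$ bounds an embedded disc $D$, I would take the boundary $S$ of a small regular neighbourhood of $D$, which is an embedded 2-sphere meeting the triangulation transversely; normalising $S$ and invoking 0-efficiency shows that $S$ bounds a ball on each side, whence $M$ is a union of two balls along $S$ and $M=S^3$. For (4), a cone face is topologically a disc whose boundary is a single edge $e$ of the triangulation; when the tip of the cone is not identified to a base point the cone face is itself an embedded disc with boundary $e$, and (3) applies. The remaining degenerate case (tip identified with a point on the boundary) is handled either by a small transverse pushoff producing a nearby embedded disc with boundary an edge, or by passing directly to the 2-sphere bounding a regular neighbourhood and applying (1).

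I expect the main technical obstacle to lie in the normalisation steps: guaranteeing that the normalisation procedure preserves the essential nature of the sphere in (1), and carefully handling the degenerate vertex identifications in (4) without the pushed-off disc acquiring an unintended self-intersection. These are precisely the issues addressed in \cite{JR}, from which the above sketch extracts only the structural backbone.
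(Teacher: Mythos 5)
The paper offers no proof of this proposition: it is stated as a summary of known results, each item carrying its citation to the precise statement in Jaco--Rubinstein \cite{JR} (Propositions 5.1, 5.3 and Corollary 5.4). So your sketch can only be measured against the arguments in \cite{JR}. Items (1), (3) and (4) do capture the structure of those arguments: normalising an essential sphere (with the standard observation that if a compression splits the sphere into two, at least one piece remains essential), doubling a normalised one-sided projective plane to produce a normal non--vertex-linking sphere in $\RR P^3$ (non--vertex-linking because a normal $\RR P^2$ must carry a quadrilateral), normalising the boundary of a regular neighbourhood of the disc bounded by an edge using the disc as a barrier, and reducing a cone face to item (3). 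Your closing remarks correctly locate the technical weight in the barrier/normalisation bookkeeping.

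Item (2), however, contains a genuine error. The two vertex-linking spheres bound the two \emph{small} balls around the vertices, and these disjoint balls certainly do not cover $M$. Irreducibility only guarantees that a separating embedded sphere bounds a ball on \emph{some} side, and for a vertex-linking sphere that side is already the vertex neighbourhood, so nothing is learned about the other side; concluding $M=S^3$ from "two spheres each bounding a ball" is circular. The argument of \cite{JR} is different: choose an edge $e$ with two distinct endpoints, note that the frontier of a regular neighbourhood of the closed edge $\overline{e}$ is a 2--sphere, and normalise it in the complement using $\overline{e}$ as a barrier; 0--efficiency forces every normal sphere produced to be vertex-linking, and the resulting case analysis shows the complement of $N(\overline{e})$ is a ball, whence $M=S^3$. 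The bound of precisely two vertices is obtained by repeating this analysis, not by a cut-and-paste on three vertex links or an edge/face count. As written, your sketch of (2) would not go through.
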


\begin{pro}
Suppose $M$ is a closed, orientable, irreducible 3--manifold with an arbitrary triangulation. If a face in the triangulation is a 3--fold, then $M=L(3).$
\end{pro}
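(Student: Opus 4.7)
The three-fold face $F$ is, as a topological space, a $2$--complex with one vertex $v$, one edge $e$ (a loop at $v$) and one $2$--cell attached to $e$ by a degree $3$ map (this is what distinguishes a $3$--fold from a dunce hat). Standard computations yield $\chi(F) = 1$ and $\pi_1(F) = \langle e \mid e^3 \rangle = \Z_3$, so $H_0(F) = \Z$, $H_1(F) = \Z_3$, $H_2(F) = 0$. Let $N = N(F) \subset M$ be a closed regular neighbourhood of $F$; then $N$ is a compact orientable $3$--manifold which deformation retracts onto $F$, so $N$ has the same fundamental group and homology as $F$, and $\chi(N) = 1$.

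The first main step is to show $\partial N \cong S^2$. Since $N$ is a compact orientable $3$--manifold, $\partial N$ is a closed orientable surface with $\chi(\partial N) = 2\chi(N) = 2$. By Lefschetz duality and the universal coefficient theorem,
\[
H_2(N, \partial N) \ \cong \ H^1(N) \ \cong \ \Hom(H_1(N), \Z) \ = \ \Hom(\Z_3, \Z) \ = \ 0.
\]
The homology long exact sequence of the pair $(N, \partial N)$ then forces the inclusion-induced map $H_1(\partial N) \hookrightarrow H_1(N) = \Z_3$ to be injective. Since $H_1$ of a closed orientable surface is free abelian, such an injection into a finite group must be zero, so $H_1(\partial N) = 0$; thus each component of $\partial N$ is a $2$--sphere, and the constraint $\chi(\partial N) = 2$ forces $\partial N$ to be a single $2$--sphere.

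Because $M$ is irreducible and $\partial N$ is a $2$--sphere, $\partial N$ bounds a $3$--ball on one side; since $\pi_1(N) = \Z_3 \neq 1$ the ball must lie on the side complementary to $N$, so $M = N \cup_{\partial} B^3$. Attaching a $3$--cell does not alter the fundamental group, whence $\pi_1(M) = \Z_3$. By the elliptisation theorem (a consequence of Perelman's proof of the geometrisation conjecture), the unique closed, orientable $3$--manifold with fundamental group $\Z_3$ is $L(3,1) = L(3)$. The principal technical obstacle is the identification $\partial N \cong S^2$, which I handled via Lefschetz duality; a more combinatorial alternative would be to exhibit $N$ directly as $L(3) \setminus \Int B^3$ using the gluing data of the one or two tetrahedra containing $F$, which would have the additional benefit of bypassing the appeal to Perelman.
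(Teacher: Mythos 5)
Your argument is correct, and its skeleton is the same as the paper's two-sentence proof: take a regular neighbourhood $N$ of the $3$--fold, show $\partial N \cong S^2$, and use irreducibility to conclude that the complement of $N$ is a ball. The differences lie in how the two steps are justified. For the sphere boundary, the paper simply asserts it, implicitly because $N$ is visibly a solid torus (the edge neighbourhood) with a $2$--handle attached along a curve winding three times longitudinally, i.e.\ $N \cong L(3)\setminus \Int B^3$; your Euler-characteristic-plus-Lefschetz-duality derivation is a clean and fully rigorous substitute, and arguably more robust. For the final identification, however, your appeal to elliptisation is a sledgehammer that the paper is at pains to avoid: the introduction explicitly advertises that the paper's conclusions do not rely on Perelman's work, so importing the elliptisation theorem here is out of keeping with the paper even though it is logically valid. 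The elementary route you sketch at the end is the intended one and costs almost nothing: since $N$ is a solid torus union a $2$--handle and the complementary ball is a $3$--handle, $M = N \cup_{\partial} B^3$ admits a genus-one Heegaard splitting, hence is $S^3,$ $S^2\times S^1$ or a lens space, and $\pi_1(M)=\Z_3$ (which you have already computed without any heavy machinery) pins it down as $L(3,1)\cong L(3,2)=L(3).$ I would recommend replacing the elliptisation step with this observation; everything else stands.
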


\begin{proof}
Let $N$ be a regular neighbourhood of the 3--fold $F$ in $M.$ Then $2 = 2\; \chi(F) = 2\; \chi(N) = \chi(\partial N).$ If $\partial N$ is connected, then it is a sphere and the result follows since $M$ is irreducible and the 3--fold is then a spine for $M.$ To see that $\partial N$ must be connected, equip the interior of the triangle that makes up the 3-fold with a transverse orientation. The single edge in the boundary of the 3--fold is a simple closed curve $\gamma$ in $M.$ Let $p\in \gamma$ and $B_p$ be a small regular neighbourhood of $p.$ Then $F \cap B_p$ looks like the shape of a $Y$ times an interval, and there must be at least one component of $B_p \setminus (F \cap B_p)$ with the property that the transverse orientation points both into the component and out of it. But this implies that $F$ cannot separate $N,$ whence $N$ has connected boundary.
\end{proof}

The starting point for our study of triangulations is the following:

\begin{lem}
Suppose $M$ is a closed, orientable, irreducible 3--manifold with an arbitrary triangulation, and $\sigma$ a tetrahedron in $M.$ Suppose three faces of $\sigma$ are M\"obius faces, and the remaining face is either a M\"obius face or a triangle face. Then $M=L(4)$ or $L(5).$
\end{lem}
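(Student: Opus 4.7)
The plan splits into an initial vertex-collapse step followed by a finite combinatorial case analysis. First, I would note that each M\"obius face of $\sigma$ identifies all three of its simplex-vertices to a single vertex in $M$: the quotient of a triangle by an identification of two of its edges is a M\"obius band (rather than a cone) precisely when the identification collapses all three vertices. Since any two distinct faces of $\sigma$ together contain all four of its vertices, the hypothesis that at least three faces of $\sigma$ are M\"obius forces $\pi(0)=\pi(1)=\pi(2)=\pi(3)=v$ for a single vertex $v\in M$. In particular every edge of $\sigma$ is then a loop at $v$, and $\sigma$ contributes four triangular faces to the vertex-link $\lk_M(v)$, which is a 2-sphere.

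Next I would exploit two observations: (i)~a face of $\sigma$ can only be paired in the triangulation with a face of the same combinatorial type, so a M\"obius face pairs only with a M\"obius face and a triangle face pairs only with a triangle face; and (ii)~each M\"obius face of $\sigma$ contributes an identification of a pair of its edges in $M$, namely its core pair. Together with the Euler characteristic constraint $\chi(\lk_M(v))=2$, a finite enumeration of the possible core-pair patterns in $\sigma$ (up to the symmetries of the tetrahedron) produces a short list of admissible local combinatorial types, and in each case one can read off a small subtriangulation of $M$ which is itself a closed 3-manifold: in the four-M\"obius case, $\sigma$'s faces must be paired internally to $\sigma$, giving a one-tetrahedron triangulation; in the three-M\"obius plus one-triangle case, $\sigma$'s triangle face must be paired to a triangle face of a single auxiliary tetrahedron $\sigma'$, giving a two-tetrahedron triangulation. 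Since this subtriangulation is already closed, it must be all of $M$.

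In each of these small triangulations, $H_1(M)$ can be computed directly from the 2-cell relators: each M\"obius face contributes a relation of the form $c^2 b^{\pm 1}=1$ (with $c$ the core edge and $b$ the boundary edge), and the triangle face (when present) contributes a further triangle relation. In every admissible configuration one finds $H_1(M)$ cyclic of order 4 or 5; combined with the irreducibility, closedness, and orientability of $M$, this forces $M\cong L(4)$ or $L(5)$. The main obstacle is the combinatorial bookkeeping of the second step: one must enumerate the admissible core-pair configurations, verify in each that the vertex link is genuinely $S^2$ (so that a 3-manifold is really obtained), and rule out configurations in which a M\"obius face of $\sigma$ could be paired externally so as to produce a strictly larger $M$.
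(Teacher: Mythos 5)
The decisive step of your argument --- that in the four-M\"obius case the faces of $\sigma$ ``must be paired internally to $\sigma$,'' and in the three-M\"obius-plus-triangle case the triangle face must be paired into a single auxiliary tetrahedron, so that $\sigma$ (or $\sigma\cup\sigma'$) is a closed subtriangulation equal to all of $M$ --- is unjustified and false in general. Being a M\"obius face is a statement about how the \emph{edges} of a single $2$--simplex image become identified in the quotient; it says nothing about which tetrahedron lies on the other side of that face under the face pairings $\Phi$. Each of the four faces of $\sigma$ can perfectly well be glued to a face of some other tetrahedron, and the ambient triangulation may have arbitrarily many tetrahedra (the hypothesis is an \emph{arbitrary} triangulation). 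Your observation (i) is vacuous --- the two paired $2$--simplices become literally the same face of $M$ --- and does not supply the constraint you need. Without the ``closed subtriangulation'' claim, the remainder of the plan (computing $H_1$ of a one- or two-tetrahedron complex) has nothing to apply to.

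The missing idea, which is how the paper proceeds, is to work with spines rather than subtriangulations: one enumerates the possible core/boundary-edge patterns among the M\"obius faces (the vertex-link-is-a-sphere condition eliminates one configuration in the three-M\"obius case), and in each surviving case exhibits an explicit $2$--complex inside $M$ --- a twisted square, or a twisted square together with one face --- whose regular neighbourhood has $2$--sphere boundary. Irreducibility then forces the complement of that neighbourhood to be a ball, identifying $M$ with $L(4)$ or $L(5)$ no matter how large the triangulation is (this is the same mechanism as the $3$--fold proposition just before this lemma). Note also that even if your homology computation were available, $H_1(M)\cong\Z_4$ or $\Z_5$ together with irreducibility does not by itself force $M$ to be a lens space: one would need $\pi_1$ rather than $H_1$, and then elliptization --- which the paper is explicitly trying to avoid. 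The explicit spine sidesteps this entirely.
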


\begin{proof}
Suppose there are three M\"obius faces and one triangle face in $\sigma.$ First suppose that each edge of the triangle face is the core edge of a M\"obius face. Then there is (up to symmetry) only one possibility, and one sees that the vertex link cannot be a sphere. Hence some edge of the triangle face is the boundary edge of a M\"obius face. If the core edge of this M\"obius face is not identified with one of the other edges of the triangle face, then not both of the remaining faces can be M\"obius faces. Hence it is identified with one of the other edges of the triangle face. Up to symmetry, there is again only one possibility and one finds a spine for $L(5)$ made up of one \emph{twisted square} (see \S\ref{subsec:Squares in triangulations} for a definition) and one face.

Hence suppose there are four M\"obius faces in $\sigma.$ Given two M\"obius faces, they meet along some edge $e$ simply as a result of being faces of $\sigma.$ Suppose $e$ is a boundary edge of both M\"obius faces. Then the condition that the other two faces are also M\"obius forces the core edges to be identified such that a square is a spine for $L(4).$ Enumerating the remaining cases, one finds only one possibility up to symmetry, and this gives a spine for $L(5).$
\end{proof}

The following result follows from \cite{JR} (Proposition 5.3 and Corollary 5.4) and \cite{JRT} (Lemma 7); see also the proof of Proposition~\ref{pro:algo to simplify triangulation} below.

\begin{lem}\label{lem:simplify cone faces}
Suppose $M$ is a closed, orientable, irreducible 3--manifold with an arbitrary triangulation. 
If a face in the triangulation is a cone or a dunce hat, then the triangulation can be simplified to have fewer tetrahedra or $M$ is homeomorphic with one of $S^3,$ $\RR P^3,$ or $L(3).$
\end{lem}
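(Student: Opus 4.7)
My plan is to handle cone and dunce faces in parallel via a common 2-sphere argument. In either case, $F$ is a contractible 2-cell in $M$ (a cone disc for the cone face, a dunce hat for the dunce face), so a regular neighborhood $N(F)$ is a 3-ball whose boundary $S$ is an embedded 2-sphere in $M$. The loop edge $e$ (the boundary of the cone, or the single edge onto which all three sides of the dunce triangle are identified) determines how $S$ sits with respect to the 2-skeleton.

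The first step is to normalize $S$ as in the proof of Proposition~5.3 of \cite{JR}, obtaining a normal 2-sphere $S'$ isotopic to $S$. If $S'$ is not vertex-linking, then $S'$ is a non-trivial normal 2-sphere in an irreducible manifold, and Lemma~7 of \cite{JRT} supplies a crushing move along $S'$ that strictly reduces the number of tetrahedra while preserving the homeomorphism type of $M$. Irreducibility is crucial here, since it ensures that crushing does not produce a non-trivial connected sum decomposition. This disposes of every case other than the vertex-linking one.

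In the remaining case, $S'$ is vertex-linking and $F$ must lie in a ball neighborhood of a single vertex. I would then enumerate the possible configurations of a cone or dunce face inside such a star: the 2-skeleton inside the ball, the possible identifications of the sides of the triangle defining $F$, and their compatibility with closedness and orientability of $M$. The list is short because only a few tetrahedra are involved, and the preceding lemma on tetrahedra with several M\"obius faces further constrains the possibilities. For a cone face this specializes the argument of Corollary~5.4 of \cite{JR} to the setting without 0--efficiency, producing $S^3$ together with the two small exceptions $\RR P^3$ and $L(3)$; for a dunce face, the pattern $e\cdot e\cdot e^{-1}$ of the attaching map forces $3[e]=0$ in $H_1$ and, by an analysis analogous to the 3--fold case just handled, gives $M=L(3)$.

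The main obstacle is the vertex-linking case analysis: verifying that every combinatorial way a cone or dunce face can sit in the star of a vertex falls into one of the exceptional manifolds. The enumeration is finite, but one must be careful to exclude configurations that would contradict either the face type being a cone/dunce (as opposed to a 3--fold or Möbius) or the underlying manifold being closed and orientable. These case checks parallel, and can be streamlined alongside, the explicit simplification algorithm of Proposition~\ref{pro:algo to simplify triangulation}.
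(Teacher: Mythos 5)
Your overall strategy (thicken the face, normalise the boundary of the neighbourhood, crush or fall back to a case analysis) is in the same spirit as the Jaco--Rubinstein machinery that the paper invokes by citation, but three steps are genuinely broken. First, the regular neighbourhood of a cone face need not be a 3--ball with $S^2$ boundary: the paper explicitly allows the cone tip to be identified with a point of the boundary edge (and in a one--vertex triangulation this always happens), in which case the image of the face is a disc with an interior point glued to a boundary point. That subcomplex carries the relation $rbr^{-1}=1$, so it has fundamental group $\Z$ and Euler characteristic $0$; its regular neighbourhood has \emph{torus} boundary, and your argument never produces a sphere. An extra compression of this torus is needed before normalisation can even begin. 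Second, your dunce-face analysis confuses the dunce hat with the 3--fold: the attaching word $e\,e\,e^{-1}$ gives $[e]=0$ in $H_1$, not $3[e]=0$; the dunce hat is contractible (a spine of the 3--ball), so this branch leads to an $S^3$--type conclusion, not to $L(3)$.

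Third, both halves of your dichotomy are asserted rather than proved, and the exceptional manifolds sit in the wrong half. Crushing a non-vertex-linking normal sphere does not in general preserve the homeomorphism type while strictly reducing the tetrahedron count: the crushed manifold is obtained from $M$ by undoing connected sums, filling boundary spheres and possibly deleting components, and the situations where one cannot recover a smaller triangulation of $M$ itself are exactly the two-tetrahedron triangulations of $\RR P^3$ and $L(3)$ --- that is, the exceptions in the statement arise from the failure of \emph{this} step, not from the vertex-linking case. Conversely, if the normalised sphere becomes vertex-linking or disappears, it does not follow that $F$ lies in the star of a vertex: normalisation moves the sphere, not $F$, and the correct conclusion (via a barrier argument) concerns the complement of $N(F)$. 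The paper itself proves the lemma by quoting Proposition 5.3 and Corollary 5.4 of \cite{JR}, Lemma 7 of \cite{JRT}, and the explicit combinatorial moves of Burton used in the proof of Proposition~\ref{pro:algo to simplify triangulation}; a self-contained version along your lines would need the torus compression for pinched cones, an honest treatment of the degenerate outcomes of crushing, and the deferred enumeration actually carried out.
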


The above observations lead us to the following definition:

\begin{defn}[face-generic]
Suppose $M$ is a closed, orientable 3--manifold. We say that a triangulation of $M$ is \emph{face-generic} if all faces are triangles or M\"obius bands and each tetrahedron has at most two M\"obius faces.
\end{defn}

\begin{rem}
We have the following immediate consequences:
\begin{enumerate}
\item A face-generic triangulation may have more than one vertex. Examples are simplicial triangulations of any manifold, or the natural 2--vertex triangulations of lens spaces.
\item A face-generic triangulation has no edge of degree one, since such an edge can only be obtained from folding two faces of a tetrahedron together and hence creating at least one face that is a cone or dunce hat.
\item A 0--efficient triangulation of $M$ is face-generic unless $M$ is homeomorphic with one of $S^3,$ $L(3),$ $L(4)$ or $L(5).$ 
\item A minimal triangulation of the closed, orientable, irreducible 3--manifold $M$ is face-generic unless $M$ is one of $S^3,$ $\RR P^3,$ $L(3),$ $L(4)$ or $L(5).$ 
\end{enumerate}
\end{rem}

Whilst it is easy to recognise whether or not a triangulation is face-generic, the class of face-generic triangulations is too large for many purposes. We now turn to pairs of faces to further restrict the class of triangulations we wish to consider.


\subsection{Pairs of faces in triangulations}
\label{subsec:Pairs of faces in triangulations}

A combinatorial map will mean a continuous map between pseudo-manifolds $N\to M$ sending cells to cells and which restricted to the interior of each (possibly singular) simplex of each dimension is a homeomorphism onto its image. In particular, $\dim N \le \dim M.$ An example is the map $\widetilde{\Delta} \to M.$

\begin{figure}[h]
  \begin{center}
    \subfigure[$D^2$]{\includegraphics[height=2.8cm]{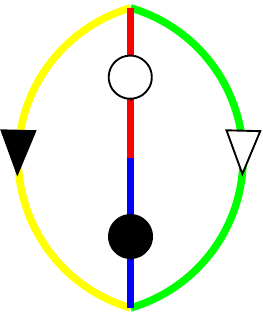}}
    \qquad\qquad
     \subfigure[$S^2$]{\includegraphics[height=2.8cm]{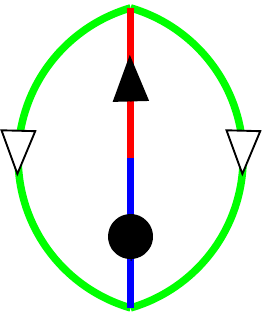}}
  \qquad\qquad
     \subfigure[$P^2$]{\includegraphics[height=2.8cm]{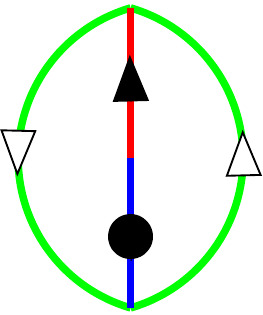}}
  \\
     \subfigure[$D^2$]{\includegraphics[height=2.8cm]{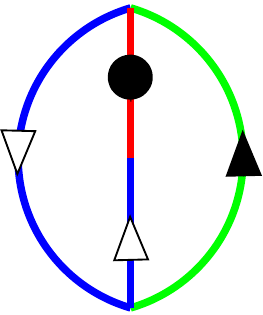}}
     \qquad\qquad
     \subfigure[$D^2$]{\includegraphics[height=2.8cm]{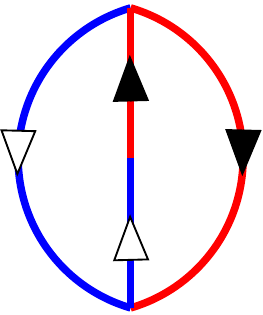}}
      \qquad\qquad
     \subfigure[$S^2/T^2$]{\includegraphics[height=2.8cm]{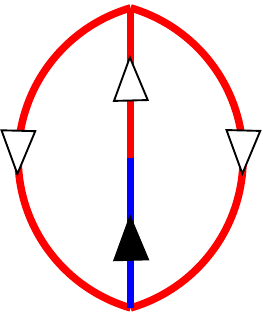}}
          \end{center}
   \caption{Identifications of two distinct faces along at least two edges: Picture (c) does not occur, and all other pictures allow an explicit modification of the triangulation making it smaller. The captions indicate which closed surfaces (if any) the map factors through.}
   \label{fig:Disc subcomplexes}
\end{figure}

The following lemma follows from results by Burton \cite{BAB-2004, BAB-2007}.

\begin{lem}[Disc subcomplexes]\label{lem:disc subcomplex}
Let $M$ be a closed, orientable, irreducible 3--manifold with a face-generic triangulation having at least 3 tetrahedra.
Let $D$ be a disc triangulated with 2 triangles meeting along two edges, and suppose that $f\co D \to M$ is a combinatorial map with the property that the images of the two triangles in $D$ are distinct faces in $M.$
\begin{enumerate}
\item If the two interior edges of $D$ are mapped to distinct edges, then the triangulation can be modified to a triangulation of $M$ having fewer tetrahedra.
\item If the two interior edges of $D$ are mapped to the same edge, then the triangles in $D$ are mapped to M\"obius faces with distinct boundary edges, or the two faces form a spine for $L(4,1)$ and hence $M=L(4,1),$ or the triangulation can be modified to a triangulation of $M$ having fewer tetrahedra.
\end{enumerate}
\end{lem}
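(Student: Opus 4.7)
The disc $D$ has a unique combinatorial structure: two boundary vertices $a,b$ connected by two parallel boundary edges $\beta_1,\beta_2$, a single interior vertex $c$, and two interior edges $e_1=ac$, $e_2=bc$ meeting at $c$. Each triangle $T_i$ uses $e_1,e_2,\beta_i$ as its edges. Writing $F_i=f(T_i)$, the plan is to enumerate the ways $f$ can collapse the boundary data $f(a),f(b),f(\beta_1),f(\beta_2)$ and, for each resulting combinatorial pattern, either produce an explicit simplification of the triangulation or recognise the exceptional configuration.

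For Case~(1), since $f(e_1)\neq f(e_2)$, the map $f$ is injective on the interior $1$--skeleton of $D$, and the image $f(D)=F_1\cup F_2$ is a $2$--complex obtained from $D$ by identifications on $\partial D$ only. The possible topological types of $f(D)$ are enumerated in Figure~\ref{fig:Disc subcomplexes}: an embedded disc, a $2$--sphere, a projective plane, or one of several further quotients. The projective plane case is ruled out because $M$ is orientable. When $f(D)$ is a $2$--sphere, irreducibility implies it bounds a $3$--ball, which is crushed to produce a smaller triangulation of $M$. When $f(D)$ is a disc, one of the tetrahedra adjacent to $F_1\cup F_2$ can be cancelled against its opposite neighbour by a local simplification in the spirit of Burton's crushing moves~\cite{BAB-2004,BAB-2007}; the face-generic hypothesis is used to guarantee that the resulting triangulation does not contain the problematic faces (cones, dunce hats, $3$--folds) that would obstruct the move.

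For Case~(2), $f(e_1)=f(e_2)=e$, so at least two edges of each triangle $T_i$ are identified to $e$ in $F_i$. If in addition $f(\beta_i)=e$, then all three edges of $T_i$ are identified, making $F_i$ a $3$--fold or dunce face, both excluded by face-generic. Hence $f(\beta_i)\neq e$, so $F_i$ has exactly two identified edges and is a cone or M\"obius face; face-generic further excludes cones, so each $F_i$ is a M\"obius face with core edge $e$ and boundary edge $f(\beta_i)$. If $f(\beta_1)\neq f(\beta_2)$, the two M\"obius faces have distinct boundary edges, giving the first conclusion. Otherwise $f(\beta_1)=f(\beta_2)$, and $F_1\cup F_2$ consists of two M\"obius bands sharing both their core and their boundary circles; a direct combinatorial analysis of the adjacent tetrahedra (constrained by the requirement that every vertex link is a $2$--sphere) shows that either $F_1\cup F_2$ is a spine for $L(4,1)$, or an embedded sphere or reducing disc subcomplex is present and enables a simplification.

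The main obstacle is the bookkeeping in Case~(1): different identifications on $\partial D$ yield several combinatorially distinct configurations for $f(D)$ and its adjacent tetrahedra, and for each one must verify that the proposed simplification is realisable as a modification of the triangulation of $M$. The face-generic hypothesis is invoked both to limit which identifications can occur and to ensure that standard reduction moves from~\cite{JR, JRT, BAB-2004, BAB-2007} apply.
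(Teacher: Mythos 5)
Your overall route coincides with the paper's: part~(1) is delegated to Burton's results on two distinct faces meeting along two distinct edges (Lemma~2.7 and Corollary~2.10 of \cite{BAB-2004}, Lemma~3.6 and Corollary~3.8 of \cite{BAB-2007}), and part~(2) starts exactly as in the paper, using face-genericity to rule out cones, dunce hats and 3--folds and conclude that $F_1$ and $F_2$ are M\"obius faces with common core edge $e=f(e_1)=f(e_2)$ and boundary edges $f(\beta_1),f(\beta_2)$. Up to that point the argument is fine.

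The genuine gap is in the remaining subcase of part~(2), where $f(\beta_1)=f(\beta_2)$. You assert that ``a direct combinatorial analysis of the adjacent tetrahedra \dots shows that either $F_1\cup F_2$ is a spine for $L(4,1)$, or an embedded sphere or reducing disc subcomplex is present and enables a simplification''; but that is exactly the disjunction to be proved, and no mechanism is supplied. The missing step is short but essential: once the core edges and the boundary edges are both identified, there are precisely two configurations, distinguished by the orientation with which the two boundary edges are glued. One orientation yields the spine for $L(4,1)$. In the other, $F_1$ and $F_2$ are two \emph{distinct} faces meeting along two \emph{distinct} edges, namely the common core edge and the common boundary edge (the configuration of Figure~\ref{fig:Disc subcomplexes}(f)); this produces a new combinatorial map $f'\colon D\to M$ whose interior edges go to distinct edges, so part~(1) applies and delivers the simplification. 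Without this orientation dichotomy and the reduction to part~(1), your case~(2) is incomplete. A smaller point on part~(1): you dismiss the projective-plane configuration ``because $M$ is orientable,'' but orientability alone does not exclude an embedded $\RR P^2$ ($\RR P^3$ is orientable and irreducible); the exclusion of Figure~\ref{fig:Disc subcomplexes}(c) comes out of Burton's analysis of the face identifications, not from orientability of $M$ as such. Since you are in any case deferring part~(1) to \cite{BAB-2004, BAB-2007}, cite the precise statements there rather than re-sketching them with ad hoc justifications.
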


\begin{proof}
The first part follows directly from Lemma 2.7 and Corollary 2.10 in \cite{BAB-2004}, and Lemma 3.6 and Corollary 3.8 in \cite{BAB-2007}.

For the second part, observe that if the two interior edges are identified, then, because we cannot get cones, dunce hats or 3-folds, the images are M\"obius faces sharing at least the core edges. Suppose the two boundary edges are identified. There are two cases, depending on the orientation of the edges. One case gives a spine for $L(4,1).$ In the other case, the map has image  equivalent to Figure~\ref{fig:Disc subcomplexes}(f) and hence there is a map $f'\co D\to M$ as in part (1).
\end{proof}

\begin{rmk}
The techniques of the lemma also apply to reducible manifolds. In this case, the modification procedure
either results in a triangulation of $M$ having fewer tetrahedra, or $M = M_1\# M_2,$ and one obtains triangulations of $M_1$ and $M_2,$ each having fewer tetrahedra.
\end{rmk}

The above lemma leads us to the following definition  (see also Figure~\ref{fig:Face-pair-reduced}):

\begin{defn}[face-pair-reduced]\label{defn:face-pair-reduced}
Let $D$ be a disc triangulated with 2 triangles meeting along two edges. We will call a triangulation \emph{face-pair-reduced} if 
every combinatorial map $f\co D \to M,$ which maps the two triangles in $D$ to distinct faces in $M,$ has the property that the two interior edges of $D$ are mapped to the same edge, and the triangles in $D$ are mapped to M\"obius faces with distinct boundary edges.
\end{defn}

\begin{figure}[h]
  \begin{center}
     \includegraphics[height=3.9cm]{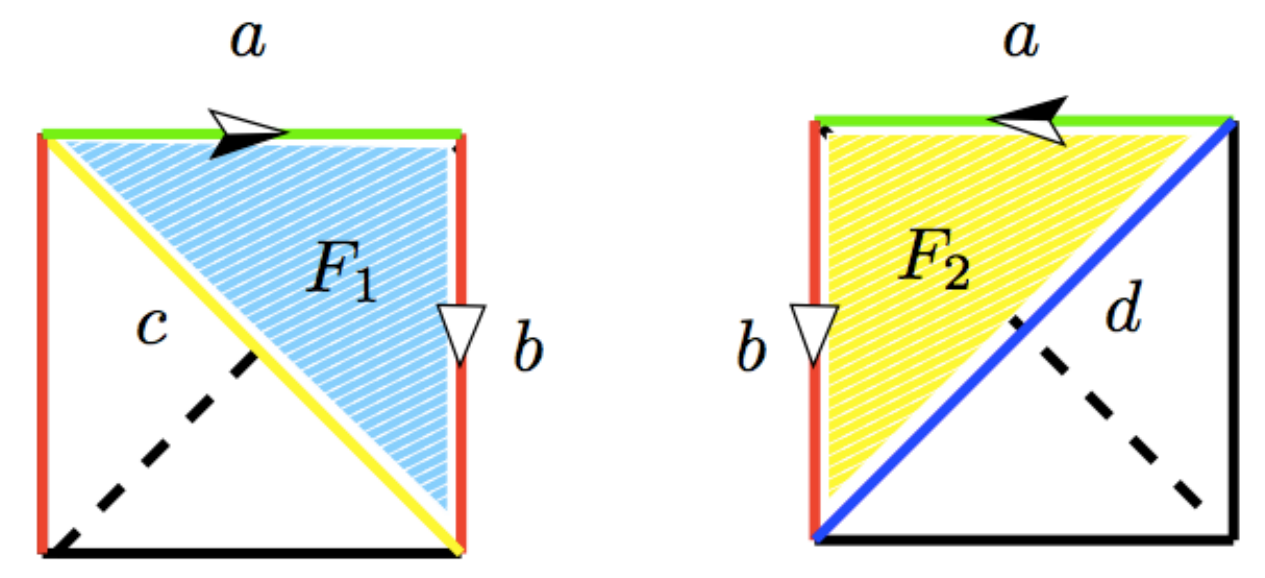}            
  \end{center}
 \caption{Face-pair-reduced. If $F_1\neq F_2,$ then $a=b$ and $c\neq d.$}
 \label{fig:Face-pair-reduced}
\end{figure}

\begin{rem}\label{rem: cone face implies not face-pair-reduced}
We have the following immediate consequences:
\begin{enumerate}
\item A tautological, but most useful, consequence of the definition is that case (1) in Lemma~\ref{lem:disc subcomplex} cannot occur in a face-generic, face-pair-reduced triangulation. Thus, each face is uniquely characterised by two consecutive distinct, oriented edges in its boundary.
\item A face-pair-reduced triangulation may have more than one vertex. Examples are simplicial triangulations of any manifold, and the natural 2--vertex triangulations of lens spaces.
\item A face-pair-reduced triangulation has no cone faces (see Burton~\cite{BAB-2004}, Lemma 2.8).
\item A minimal triangulation of a closed, orientable, irreducible 3--manifold with at least 3 tetrahedra is face-pair-reduced.
\item A 0--efficient and face-generic triangulation may not be face-pair-reduced. An example is given by the 4 tetrahedron triangulation of $S^3$ defined by the following face pairings:

\begin{center}
\begin{tabular}{l | l | l | l | l }
Tetrahedron & Face 012 & Face 013 & Face 023 & Face 123 \\
\hline
    0  &               2 (231)  &  1 (321)  &  0 (312) &   0 (230)\\
    1  &              3 (231) &   3 (023)  &  2 (032)  &  0 (310)\\
    2  &              3 (013)  &  3 (120)  &  1 (032) &   0 (201)\\
    3  &              2 (301)  &  2 (012)  &  1 (013)  &  1 (201)\\
\end{tabular}    
\end{center}    

This triangulation can be simplified to a 1--tetrahedron triangulation of $S^3$ via a sequence of three Pachner moves.
\end{enumerate}
\end{rem}


\subsection{Obtaining small face-pair-reduced, face-generic triangulations}

Every simplicial triangulation is face-pair-reduced and face-generic. In particular, one can convert any triangulation to a face-pair-reduced, face-generic triangulation by performing at most two barycentric subdivisions. However, this increases the number of tetrahedra.

\begin{pro}\label{pro:algo to simplify triangulation}
There is an algorithm which takes as input any triangulation of a closed, orientable, irreducible 3--manifold $M,$ and either outputs a face-pair-reduced, face-generic triangulation of $M$ having at most the same number of tetrahedra and precisely one vertex, or concludes that $M$ is $S^3,$ $\RR P^3,$ $L(3),$ $L(4)$ or $L(5).$
\end{pro}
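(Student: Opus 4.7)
The plan is to give an iterative simplification algorithm whose state is the current triangulation and whose complexity is the lexicographic pair (number of tetrahedra, number of vertices). Each iteration either executes a move that strictly decreases this complexity and iterates on the result, or concludes that $M$ is one of the listed exceptional manifolds. Termination is automatic since the complexity is well-ordered.

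First I would examine the $2$-skeleton. If some face is a $3$-fold, then by the Proposition in Section~\ref{subsec:Faces in triangulations} that a $3$-fold face forces $M = L(3),$ we are done. If some face is a cone or dunce hat, Lemma~\ref{lem:simplify cone faces} either strictly reduces the number of tetrahedra (and we iterate) or concludes $M \in \{S^3, \RR P^3, L(3)\}.$ Once these moves are exhausted, every face is a triangle or a M\"obius band. I would then scan each tetrahedron: if one has three or four M\"obius faces, the lemma on three-M\"obius-face tetrahedra in Section~\ref{subsec:Faces in triangulations} forces $M \in \{L(4), L(5)\};$ otherwise the triangulation is face-generic.

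Next I would enforce face-pair-reducedness. For every pair of distinct faces meeting along two edges, apply Lemma~\ref{lem:disc subcomplex}: the outcome is either a strict reduction in the number of tetrahedra (iterate), recognition of $M = L(4,1),$ or the pair is already of the permitted type in Definition~\ref{defn:face-pair-reduced}. Since the number of offending pairs is bounded by the size of the triangulation, this subprocess terminates and delivers a face-generic, face-pair-reduced triangulation of $M$ with at most the original number of tetrahedra.

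Finally, if the output has more than one vertex, I would apply standard simplification moves from \cite{JR}, \cite{BAB-2004} and \cite{BAB-2007} to reduce the vertex count: edge collapses across an inter-vertex edge with disc link, $2$-$0$ moves along an edge of degree two, and close-the-book moves. Because a face-generic triangulation has no degree-one edges, at least one of these moves is available. After such a move the triangulation may again fail to be face-generic or face-pair-reduced, in which case we iterate the whole procedure; the lexicographic complexity still strictly drops, so termination is preserved. The main obstacle is to verify that for a face-generic, face-pair-reduced triangulation of an irreducible $3$--manifold with two or more vertices, a vertex-reducing move of the above kind always exists. This reduces to a local analysis of the link of an edge with distinct endpoints, combined with face-pair-reducedness and irreducibility of $M,$ to rule out all configurations except those corresponding to the small exceptional manifolds.
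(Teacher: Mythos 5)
Your overall architecture (iterative simplification driven by a terminating complexity measure, with the $3$--fold, cone/dunce, and three-M\"obius-face cases dispatched by the cited lemmas, and face-pair-reducedness enforced via Lemma~\ref{lem:disc subcomplex}) matches the paper's proof. The genuine gap is your final step. You assert that for a face-generic, face-pair-reduced triangulation with more than one vertex ``at least one of these moves is available'' because there are no degree-one edges, and then immediately concede that verifying this is ``the main obstacle,'' deferring it to an unexecuted ``local analysis of the link of an edge with distinct endpoints.'' That analysis is never carried out, so the algorithm as you describe it is not shown to produce a one-vertex output. The absence of degree-one edges does not by itself guarantee that an edge collapse, a $2$--$0$ move, or a close-the-book move applies, and attempting to prove existence of such a move from face-genericity and face-pair-reducedness alone is harder than necessary.

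The paper avoids this entirely by reordering the steps: it reduces to a one-vertex triangulation \emph{first}, citing the Jaco--Rubinstein machinery (proofs of Theorem 5.5 and Proposition 5.1 in \cite{JR}), which applies to an \emph{arbitrary} multi-vertex triangulation of a closed, orientable, irreducible $3$--manifold and, crucially, strictly \emph{reduces the number of tetrahedra} in doing so. This is also what makes the paper's loop terminate: the explicit face-pair operation from \cite{BAB-2004} may keep the tetrahedron count fixed while \emph{increasing} the number of vertices (note that under your lexicographic pair this intermediate state has strictly larger complexity), and it is the subsequent vertex reduction that pushes the tetrahedron count down. To repair your argument, replace the final step by this citation and move it to the front of the loop; you should also run $3$--sphere recognition up front, both to handle $S^3$ separately and because the face-pair operation can split the triangulation into pieces representing $M$ and $S^3$, which must then be told apart.
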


\begin{proof}
For the purpose of this paper, we are merely interested in the existence of an algorithm, not in producing an efficient algorithm.
\begin{enumerate}
\item[(0)] First run the 3--sphere recognition algorithm to decide whether $M$ is $S^3.$ If yes, output $M$ is $S^3.$ Else go to step (1).
\item[(1)] If the triangulation has only one vertex, go to (2). Otherwise the given triangulation has more than one vertex, and one converts it to a 1--vertex triangulation, thereby reducing the number of tetrahedra (see the proofs of Theorem 5.5 and Proposition 5.1 in \cite{JR}). Then go to (2).
\item[(2)] If there are at most 2 tetrahedra, one recognises the manifold and if one exists, outputs a face-pair-reduced, face-generic, 1--vertex triangulation of $M,$ or outputs the homeomorphism type of $M.$ Otherwise go to step (3).
\item[(3)] If a face is a 3--fold, then output $M$ is $L(3)$; else if some tetrahedron has more than 2 M\"obius faces, then $M=L(4)$ or $L(5),$ and the conclusion can be obtained either from the combinatorics of the 2--skeleton, or by computing $H_1(M)$ from the 2--skeleton; else go to step (4).
\item[(4)] Check if the triangulation is face-pair-reduced (recall that existence of cone or dunce faces implies not face-pair-reduced). If ``Yes," then the triangulation is face-pair-reduced and face-generic, and output the triangulation. If ``No," then apply the explicit operation from the proof of Lemma 2.7 in \cite{BAB-2004}. This either gives a triangulation of $M$ having the same number of tetrahedra but more vertices, or it gives triangulations of $M$ and $S^3,$ each having fewer tetrahedra. In the second case, one needs to run the 3--sphere recognition algorithm to determine which triangulation corresponds to $M.$ In either case, one feeds the new triangulation of $M$ back to (1).
\end{enumerate}
In steps (2) and (3), the number of tetrahedra is not affected, so if the algorithm does not terminate in these steps, one arrives at (4) with the same number of tetrahedra. In step (4), the number of tetrahedra either decreases and one goes back to (1), or the number of tetrahedra remains the same, but subsequently decreases in step (1). In particular, if the algorithm does not terminate in steps (2), (3) or (4), then the loop $(4) \to (1) \to (2) \to (3) \to (4)$ decreases the number of tetrahedra. It follows that the algorithm will terminate.
\end{proof}

\begin{rem}
For a reducible 3--manifold, $M,$ the above algorithm can be adapted to produce face-pair-reduced, face-generic triangulations of (not necessarily prime) summands of $M.$
\end{rem}


\section{Squares and tetrahedra in face-generic triangulations}
\label{sec:squares}

The previous section focussed on the study of triangulations using faces and certain pairs of faces. We will now take the combinatorial study of triangulations further by examining twisted squares and pairs of twisted squares. This leads to a classification of the possible types of tetrahedra in a face-pair-reduced, face-generic triangulation.


\subsection{Squares in triangulations}
\label{subsec:Squares in triangulations}

Let $\sigma$ be an oriented tetrahedron in $\widetilde{\Delta}.$ A \emph{twisted square} in $\sigma$ is a properly embedded disc in $\sigma$ such that the boundary of the disc is the union of two pairs of opposite edges of $\sigma;$ see Figure~\ref{fig:quad+square}. Twisted squares first appeared in work of Thurston~\cite{thu}. Notice that each of the three quadrilateral types in $\sigma$ naturally corresponds to the twisted square, which the quadrilateral meets in its corners. The adjective \emph{twisted} will often be omitted.

\begin{figure}[h]
  \begin{center}
     \includegraphics[height=3cm]{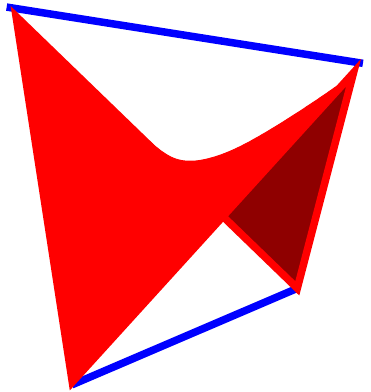}\qquad\qquad\qquad\qquad
     \includegraphics[height=3cm]{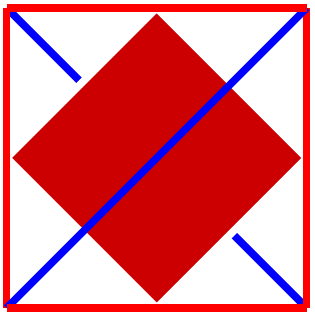}
            \end{center}
 \caption{Twisted square. On the left is a twisted square as realised by a fundamental piece of Schwarz's minimal surface, on the right is a quadrilateral disc and the boundary of its dual square.}
 \label{fig:quad+square}
\end{figure}

A twisted square has the following geometric realisation. Identify $\sigma$ with a regular Euclidean tetrahedron. Then the minimal disk spanning the union of two pairs of opposite edges of $\sigma$ is a twisted square. This is a fundamental piece of Schwarz's minimal surface. We will work with twisted squares combinatorially, but alternative arguments using the Gau\ss--Bonnet formula can be given because this realisation of a twisted square is isometric with an equilateral hyperbolic polygon with all dihedral angles $\pi/3.$

Under the map $\widetilde{\Delta}\to M,$ the edges of a square may become identified; the image may have 4, 3, 2 or 1 distinct edges in $M.$ Labelling the boundary of a square by letters indicating the \emph{unoriented} edges in $M$, we obtain the following basic \emph{partition-types of squares}:
$$
[A] \ abcd \quad [B] \ abac  \quad [C] \ aabc \quad [D] \ abab \quad [E] \ aaab \quad [F]  \ aabb \quad [G] \ aaaa,
$$
where different letters correspond to distinct (unoriented) edges in $M$.

\begin{figure}[h]
  \begin{center}
    \subfigure[A]{\includegraphics[height=2.3cm]{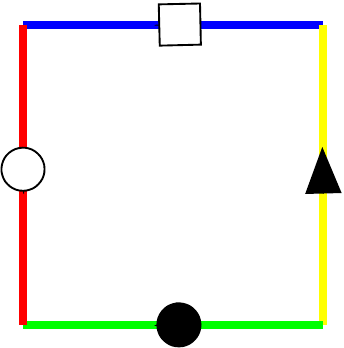}}
    \qquad
     \subfigure[B]{\includegraphics[height=2.3cm]{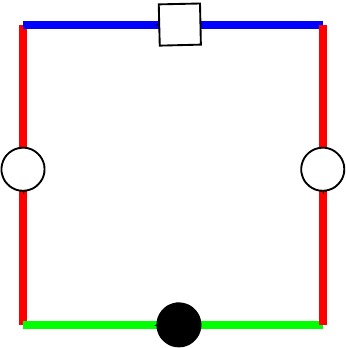}}
  \qquad
     \subfigure[C]{\includegraphics[height=2.3cm]{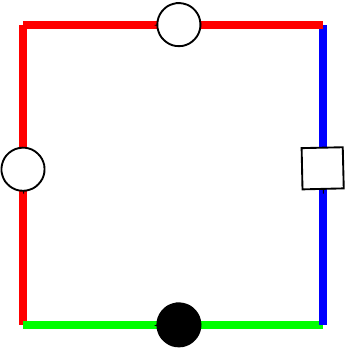}}
  \qquad
     \subfigure[D]{\includegraphics[height=2.3cm]{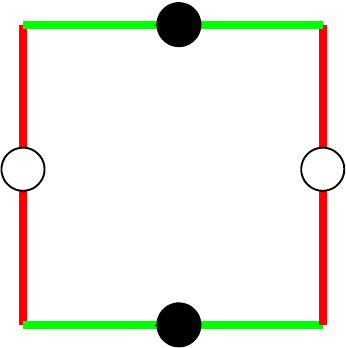}}
     \\
     \subfigure[E]{\includegraphics[height=2.3cm]{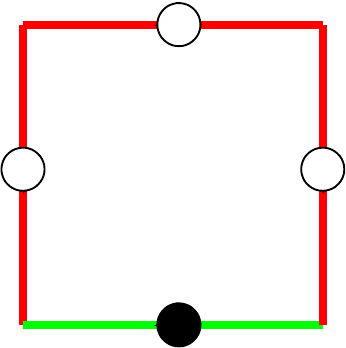}}
    \qquad
     \subfigure[F]{\includegraphics[height=2.3cm]{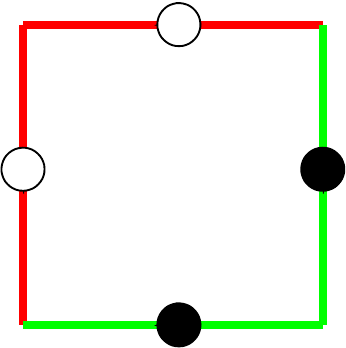}}
   \qquad
     \subfigure[G]{\includegraphics[height=2.3cm]{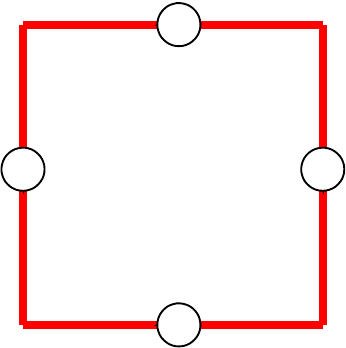}}
      \end{center}
 \caption{The partition types of squares (edges of the same colour are identified; edges of distinct colours are distinct)}
\end{figure}

It will be useful to have names for certain types of squares, as we did for triangles. We will only need names indicating the topology. A square of type $B$ can be an \emph{annulus square} or a \emph{M\"obius square}. A square of type $C$ is a \emph{M\"obius square}, a square of type $D$ is a \emph{torus square} or \emph{Klein square} or \emph{projective square}, a square of type $F$ is a \emph{Klein square}. See Figure~\ref{fig:square-topology}.

\begin{figure}[h]
  \begin{center}
    \subfigure[Annulus]{\includegraphics[height=2.3cm]{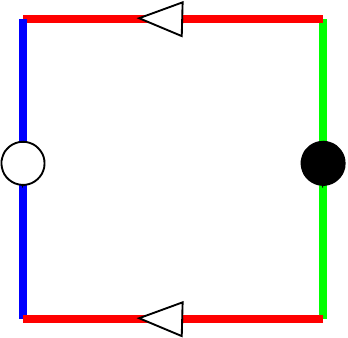}}
 \qquad
     \subfigure[M\"obius]{\includegraphics[height=2.3cm]{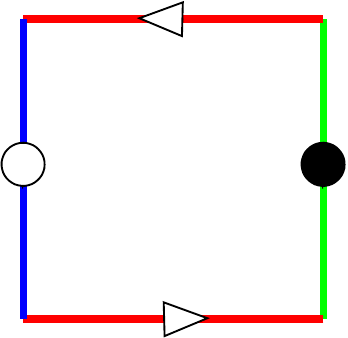}}
     \qquad
     \subfigure[M\"obius]{\includegraphics[height=2.3cm]{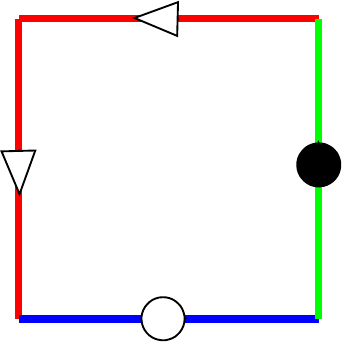}}
    \qquad
     \subfigure[Klein]{\includegraphics[height=2.3cm]{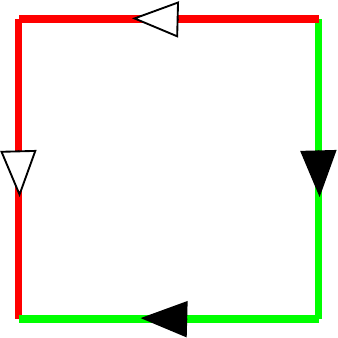}}\\
     \subfigure[Klein]{\includegraphics[height=2.3cm]{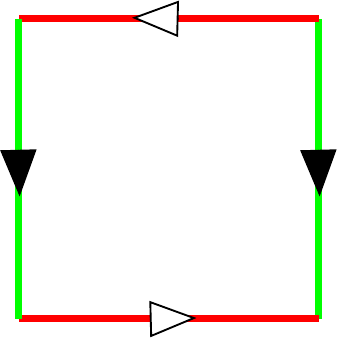}}
 \qquad
     \subfigure[Torus]{\includegraphics[height=2.3cm]{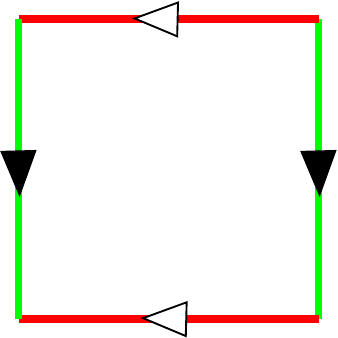}}
      \qquad
     \subfigure[Projective]{\includegraphics[height=2.3cm]{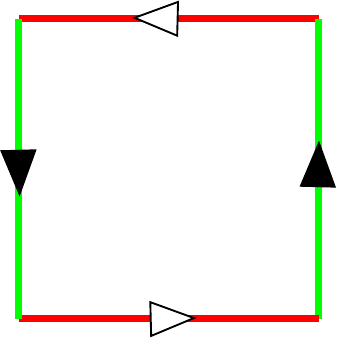}}
          \end{center}
   \caption{Some topological types of squares in a face generic triangulation}
   \label{fig:square-topology}
\end{figure}

We now define what we mean by a \emph{pinched surface} (see also Figure~\ref{fig:pinched surface}). Suppose $S$ is a compact surface and partition $S$ into finite subsets with the property that all but finitely many of these sets are singletons. Then the quotient space is a \emph{pinched surface}. Equivalently, consider a finite collection of pairwise disjoint subsets $V_k$ of $S.$ Then identifying all points in $V_k$ to one point for each $k$ gives a pinched surface.

\begin{figure}[h]
\begin{center}
\includegraphics[height=3.3cm]{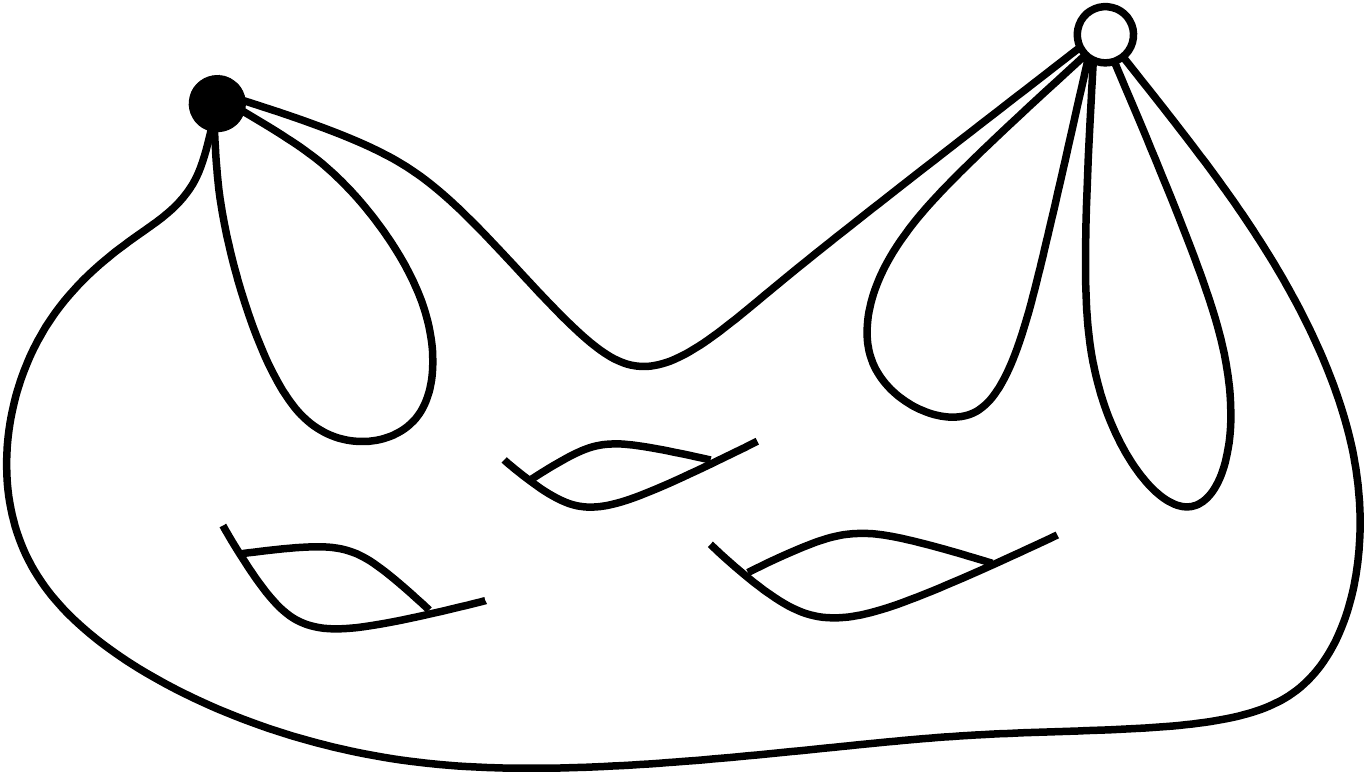}
      \end{center}
 \caption{A pinched surface}
 \label{fig:pinched surface}
\end{figure}

\begin{lem}\label{lem:G-F-D square lemma} 
Suppose $M$ is closed, orientable and has a face-generic triangulation. 
\begin{enumerate}
\item No square is of type $G.$
\item If there is a square of type $F,$ then $M$ contains an embedded Klein bottle.
\item If there is a square of type $D,$ and this square is not an embedded torus in $M,$ then $M$ either contains an embedded projective plane or an embedded Klein bottle.
\item If the triangulation is also face-pair-reduced, then no square of type $D$ is a projective square.
\end{enumerate}
\end{lem}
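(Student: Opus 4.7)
The argument splits into the four parts of the statement; the common thread for (2)--(4) is to read off the abstract closed surface $\Sigma$ obtained from the twisted square disc $Q$ by identifying its boundary according to the partition type and the orientations of those identifications inherited from $M$, and then either verify that the induced map $\Sigma\to M$ is an embedding or rule out the configuration.

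For part (1), the plan is a direct contradiction from face-generic. If $\sigma$ contains a square of type $G$, all four boundary edges of the square are identified to a single edge of $M$; these form two of the three pairs of opposite edges of $\sigma$, and each of the four faces of $\sigma$ contains exactly two of them. Thus every face of $\sigma$ has a pair of identified edges and is a cone, M\"obius, $3$--fold or dunce face, contradicting the face-generic restrictions (no cones, $3$--folds or dunce faces, and at most two M\"obius faces per tetrahedron).

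For part (2) (pattern $aabb$), the two pairs of identified edges are adjacent in $\partial Q$, so each pair lies in a common face of $\sigma$; face-generic forces those two faces to be M\"obius (rather than cone). A M\"obius identification of two edges meeting at a shared vertex does not fix that vertex, and hence collapses all three vertices of the face; applied to both faces, all four vertices of $\sigma$ collapse to a single vertex of $M$. The abstract quotient is then the one-vertex, two-edge, one-face CW complex with attaching word $aabb$, which is the Klein bottle, and since no further identifications of points of $Q$ are forced, the quotient embeds in $M$. For part (3) (pattern $abab$), the identified pairs are opposite in $Q$ and in $\sigma$, so no face of $\sigma$ is forced to have identified edges. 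A four-case analysis on the orientations of the two identifications yields attaching words $aba^{-1}b^{-1}$ (torus, all four corners collapse), $abab^{-1}$ or $aba^{-1}b$ (Klein bottle, all four corners collapse), or $abab$ with only diagonally opposite corners of $Q$ identified (two-vertex, two-edge CW complex, yielding $\mathbb{RP}^2$). A link computation at each vertex confirms $\Sigma$ is a closed surface in each case, and the map $\Sigma\to M$ is injective since no additional identifications of points of $Q$ are available.

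For part (4), the plan is to rule out the projective case of (3) under the face-pair-reduced hypothesis. In that case, a direct computation shows each of the two opposite pairs of faces of $\sigma$ shares all three of its edges in $M$: $\{012\}$ and $\{123\}$ share $\{a,b,\{12\}\}$, while $\{013\}$ and $\{023\}$ share $\{a,b,\{03\}\}$. Let $D$ be a disc with two triangles meeting along two interior edges, and let $f\co D\to M$ send the triangles to $\{012\}, \{123\}$ and the two interior edges of $D$ to $a$ and $b$. If $\{012\}\neq\{123\}$ in $M$, this directly contradicts face-pair-reduced. If $\{012\}=\{123\}$ in $M$, this must come from a self-pairing of $\sigma$; I would enumerate the vertex-correspondence patterns for such a face pairing that are compatible with the projective vertex identifications $\{0\sim 3,\ 1\sim 2\}$, leaving only a few options, and then apply the same dichotomy to the pair $(\{013\},\{023\})$. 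In each subcase the outcome is either a second violation of face-pair-reduced or a vertex of $M$ whose link is computed to have Euler characteristic $0$ rather than $2$, contradicting that $M$ is a manifold. The main obstacle I expect is the bookkeeping required for the vertex-link computation in the self-pairing subcase.
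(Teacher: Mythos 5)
Parts (1) and (2) are correct and essentially the paper's argument: a type $G$ square forces all four faces of $\sigma$ to be M\"obius faces, and a type $F$ square gives an embedded one-vertex Klein bottle once face-generic rules out cone identifications. The first genuine gap is in part (3). In the $abab$ case your abstract complex $\Sigma$ is indeed an $\RR P^2$ with two vertices, but the assertion that ``the map $\Sigma\to M$ is injective since no additional identifications of points of $Q$ are available'' is false: the two vertex classes of $\Sigma$ are carried to vertices of $M$ by the global quotient map $\widetilde{\Delta}\to M$, which may identify them even though the square's own edge identifications do not --- in a one-vertex triangulation (the main case of interest) it necessarily does. The image is then only a \emph{pinched} projective plane. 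The paper repairs exactly this: intersect the pinched $\RR P^2$ with a small vertex-linking sphere to get two disjoint simple closed curves, and cap these off with disjoint discs inside the vertex neighbourhood to produce an embedded $\RR P^2.$ You need this surgery step (or an equivalent one) to reach the stated conclusion.

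Part (4) has two problems. First, your identification of the constrained face pairs is off. With the projective identification $(0\to 2)\sim(1\to 3)$ and $(2\to 1)\sim(3\to 0)$ on the square with boundary cycle $02,12,13,03,$ the corners of the square that become identified are the one at vertex $2$ (lying in face $012$) with the one at vertex $3$ (in face $013$), and the one at vertex $1$ (in face $123$) with the one at vertex $0$ (in face $023$). So the pairs of faces carrying a common pair of consecutive \emph{oriented} edges $(a,b)$ --- the configuration that produces a map of the disc $D$ --- are $\{012,013\}$ and $\{023,123\},$ i.e.\ the pairs adjacent to the two edges $01,23$ \emph{not} on the square, rather than $\{012,123\}$ and $\{013,023\}$ as you claim (those share $a$ and $b$ only as unordered edges, with incompatible corner orientations, so face-pair-reduced says nothing about them; nor do they share all three edges). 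Second, your plan for the case where the forced pair of faces is actually identified (enumerating self-pairings and computing vertex links) is both unexecuted and unnecessary: the face pairing forced by the face-pair-reduced condition is completely determined by the corner matching (for $012\to 013$ it is $0\mapsto 1,$ $1\mapsto 0,$ $2\mapsto 3$), and a direct check shows this pairing \emph{preserves} the induced boundary orientations of the two faces, which is impossible in an oriented triangulation. That single observation --- the paper's argument --- disposes of the projective case with no enumeration at all.
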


\begin{proof}
Since the triangulation is face-generic, no face is a cone, 3--fold or dunce hat. If two consecutive edges of a square are identified, then the face containing them is a M\"obius face. So if some square is of type $G,$ then some tetrahedron has four M\"obius faces. This completes the first case. The second also follows from the induced orientations of the edges.
The third follows from the observation that the possible orientations on the edges of a square of type $D$ either give an embedded torus, and embedded Klein bottle or a pinched projective plane. Intersecting the pinched projective plane with the boundary of a small neighbourhood of the vertex gives two disjoint simple closed curves, and capping these off with disjoint discs in the vertex neighbourhood gives an embedded projective plane in $M.$

The fourth claim follows by drawing in the remaining edges of a tetrahedron containing a square of type $D.$ The two faces are forced to be identified due to Definition~\ref{defn:face-pair-reduced}. However, the induced face pairing will be orientation preserving, contradicting the hypothesis that $M$ is orientable.
\end{proof}


\subsection{Pinched, capped and surgery 2--square surfaces}
\label{subset:pinched 2--square surface}

In this section, we describe certain unions of two twisted squares in distinct tetrahedra which allow us to draw conclusions about the topology of a triangulated 3--manifold.

\begin{lem}\label{lem:capped 2-square surface}
Let $M$ be a closed, orientable, triangulated 3--manifold, and $S$ be a closed, connected surface which has a CW decomposition, whose set of 2--cells consists of at most two squares. Then $-1 \le \chi(S) \le 2.$
Suppose further that $f\co S \to M$ is a combinatorial map that takes the squares to twisted squares in distinct tetrahedra, and distinct edges in $S$ to distinct edges in $M.$ Then there is an embedding $g\co S \to M.$
\end{lem}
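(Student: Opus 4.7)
The plan is to split the lemma into its two parts: the Euler characteristic bound follows from a quick face--edge count, while the embedding claim reduces to showing that $f$ is already injective off the 0--skeleton of $S$ and then repairing it near the vertices.

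For $-1 \le \chi(S) \le 2$: since $S$ is a closed surface whose 2--cells are at most two squares, each edge lies in the boundary of exactly two 2--cells counted with multiplicity, and each 2--cell has four boundary edges; so $2E = 4F$, i.e., $E = 2F$, and $\chi(S) = V - F$. Using $V \ge 1$ and $F \le 2$ gives $\chi(S) \ge -1$, and $\chi(S) \le 2$ is immediate from the classification of closed surfaces.

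For the embedding, let $V_0$ denote the 0--skeleton of $S$. I would first verify by a short case analysis that $f$ is injective on $S \setminus V_0$: interiors of 2--cells of $S$ map homeomorphically to interiors of twisted squares lying in distinct tetrahedra, hence are pairwise disjoint in $M$; interiors of distinct 1--cells of $S$ map to interiors of distinct edges of $M$ by hypothesis, with $f$ a homeomorphism on each open cell; and the interior of a twisted square lies in the interior of a tetrahedron, so it misses the 1--skeleton of $M$ entirely. The only remaining failure of injectivity can therefore occur at vertices.

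Suppose vertices $v_1, \dots, v_\ell$ of $S$ all map to a single vertex $w$ of $M$. I would pick a small regular neighbourhood $N(w) \cong B^3$ whose preimage in $S$ is a disjoint union of small disk neighbourhoods $D_1, \dots, D_\ell$ of the $v_i$. By the injectivity already established on $S \setminus V_0$, the boundary circles $\gamma_i = f(\partial D_i)$ are pairwise disjoint, embedded in $\partial N(w) \cong S^2$. The main obstacle is now to fill these curves by pairwise disjoint embedded 2--disks $\tilde D_i \subset N(w)$. I would do this by an innermost-disk induction on $\ell$: any innermost $\gamma_i$ bounds a disk in $\partial N(w)$ disjoint from the other $\gamma_j$; push that disk slightly into $N(w)$ to cut off a lens-shaped 3-ball capped by it, then apply the inductive hypothesis to the complementary 3-ball, whose boundary sphere still carries the remaining $\gamma_j$. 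Defining $g$ to equal $f$ outside $\bigcup_i D_i$ and to map each $D_i$ homeomorphically onto $\tilde D_i$ extending $f|_{\partial D_i}$, and performing this modification at every vertex of $M$ where a collision occurs, yields a continuous injective map $g \co S \to M$; by compactness of $S$, $g$ is a topological embedding. The disk-filling step in $B^3$ is the only place where the topology of the 3-ball really enters, and is the main technical point to pin down.
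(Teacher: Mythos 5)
Your proposal is correct and follows essentially the same route as the paper: the same edge/face count for the Euler characteristic bound, and the same surgery argument for the embedding (observe that $f$ is injective away from the vertices because the squares lie in distinct tetrahedra and the edges are distinct, then cap off the resulting disjoint simple closed curves on the vertex-linking spheres by pairwise disjoint discs chosen via an innermost-circle argument). The paper's write-up is terser but contains no idea beyond what you have.
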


\begin{proof}
Since there are four edges, two faces and at least one vertex, we have $-1 \le \chi(S) \le 2.$ In fact, $S$ can be viewed as obtained from a hexagon by identifying boundary edges in pairs. Now $f\co S \to M$ is a combinatorial map that takes the squares to squares in distinct tetrahedra, and distinct edges to distinct edges. Then $f(S)$ is a (possibly) pinched surface in $M.$ We can replace $f(S)$ by an embedded surface homeomorphic with $S$ as follows. The surface $f(S)$ is embedded outside the vertex neighbourhoods. The intersection with sufficiently small vertex linking spheres consists of a finite number of simple closed loops, which we may cap off using discs (starting from innermost circles) to obtain an embedded surface homeomorphic with $S.$ This surgery of $f$ gives the desired embedding $g\co S \to M.$
\end{proof}

\begin{figure}
  \begin{center}
    \subfigure[$P\# P\# P,$ $(C,C)$]{\includegraphics[height=2.1cm]{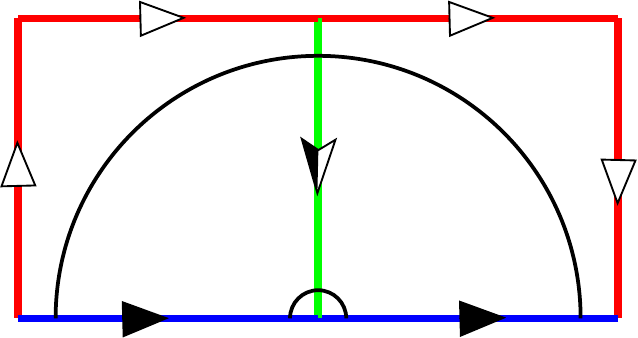}\label{fig:immersed squares-a}}
    \qquad
     \subfigure[$P\# P,$ $(C,C),$ $L(4)$]{\includegraphics[height=2.1cm]{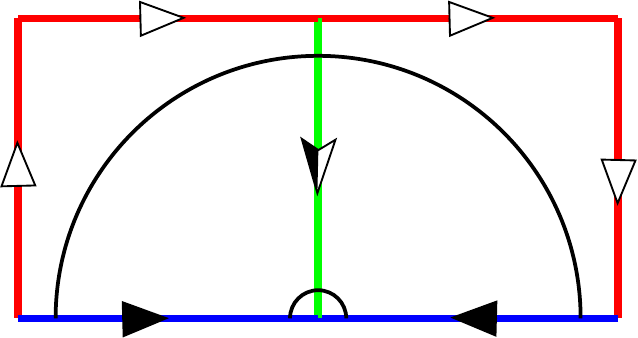}\label{fig:immersed squares-b}}
  \qquad
     \subfigure[$P,$ $(C,C),$ $L(2)$]{\includegraphics[height=2.1cm]{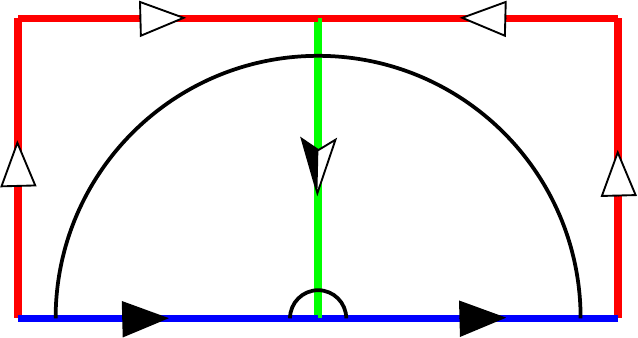}\label{fig:immersed squares-c}}
\\
     \subfigure[$P\# P,$ $(C,C)$]{\label{fig:immersed squares d}\includegraphics[height=2.1cm]{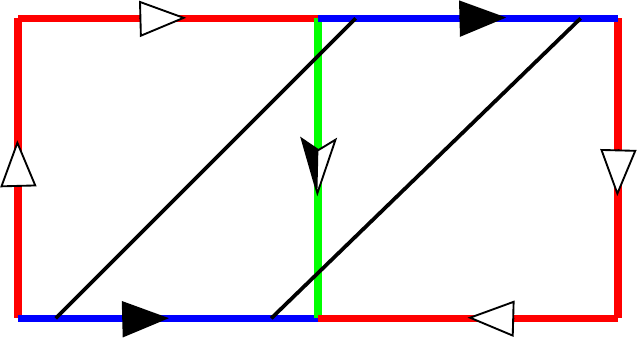}\label{fig:immersed squares-d}}
     \qquad
     \subfigure[$P\# P\# P,$ $(C,E)$]{\includegraphics[height=2.1cm]{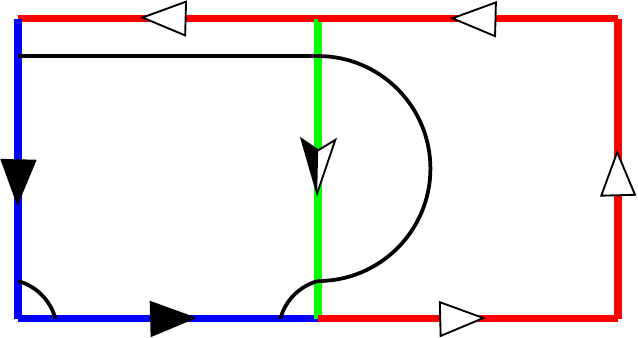}\label{fig:immersed squares-e}}
    \qquad
     \subfigure[$P\# P,$ $(C,E)$]{\includegraphics[height=2.1cm]{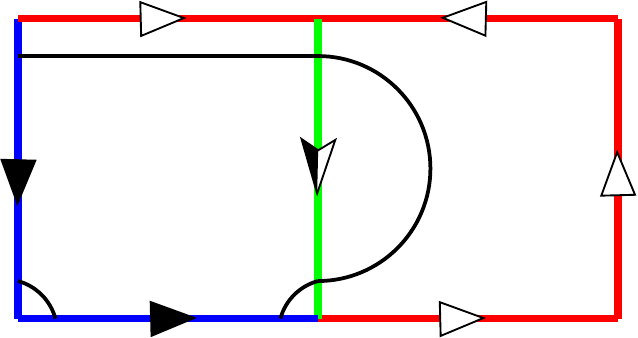}\label{fig:immersed squares-f}}
\\
      \subfigure[$P,$ $(B,B),$ $L(2)$]{\includegraphics[height=2.1cm]{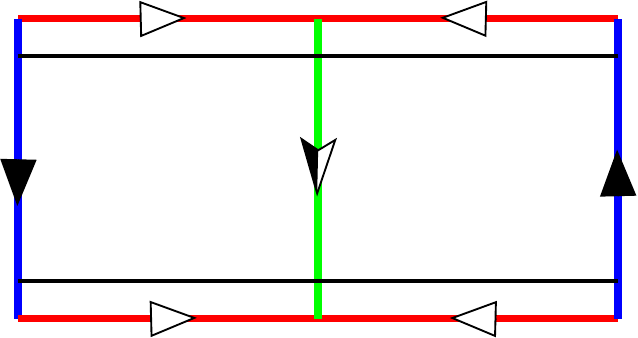}\label{fig:immersed squares-g}}
    \qquad
     \subfigure[$P\# P\# P,$ $(B,B)$]{\includegraphics[height=2.1cm]{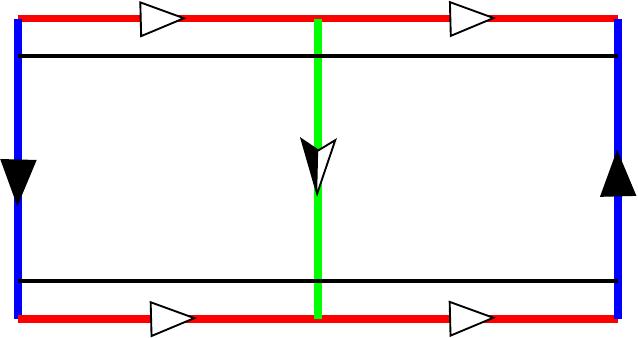}\label{fig:immersed squares-h}}
  \qquad
     \subfigure[$P\# P,$ $(B,B)$]{\label{fig:immersed squares i}\includegraphics[height=2.1cm]{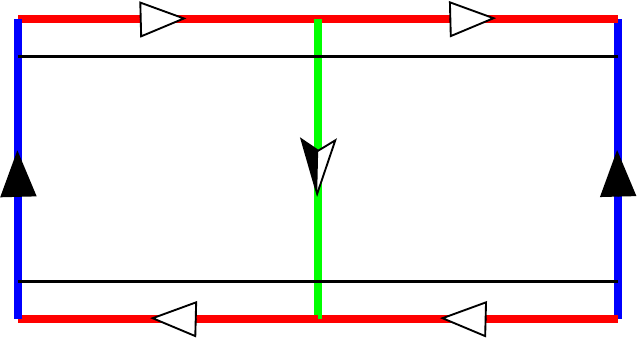}\label{fig:immersed squares-i}}
\\
     \subfigure[$P,$ $(B,B),$ $L(2)$]{\includegraphics[height=2.1cm]{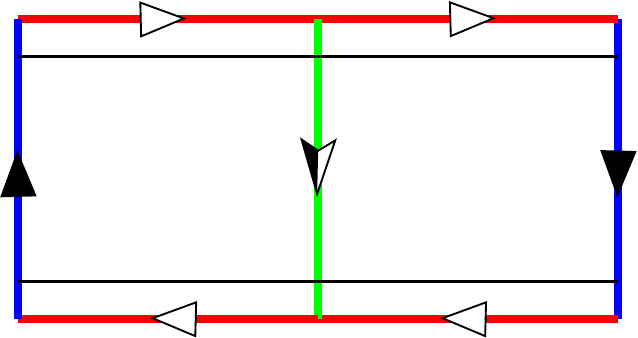}\label{fig:immersed squares-j}}
     \qquad
     \subfigure[$P,$ $(B,B),$ $L(2)$]{\includegraphics[height=2.1cm]{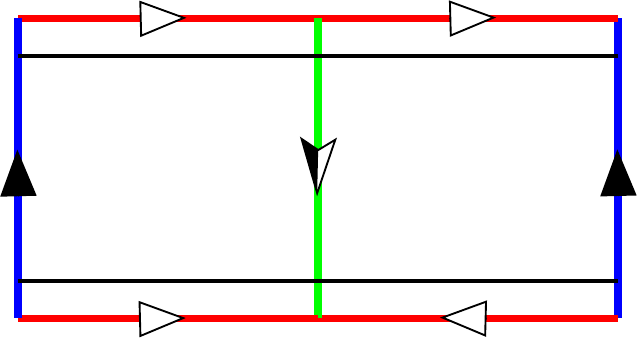}\label{fig:immersed squares-k}}
    \qquad
     \subfigure[$P\# P,$ $(B,B)$]{\includegraphics[height=2.1cm]{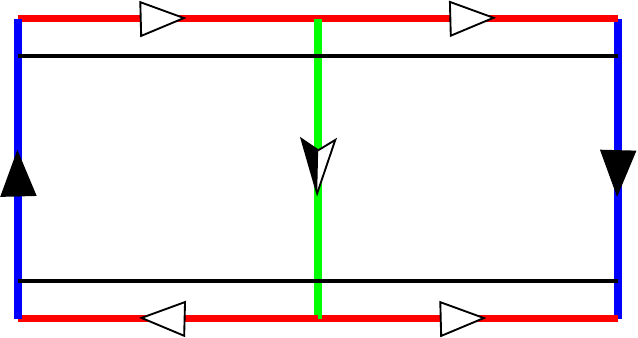}\label{fig:immersed squares-l}}    
\\
     \subfigure[$P\# P\# P,$ $(E,E),$ $L(6)$]{\includegraphics[height=2.1cm]{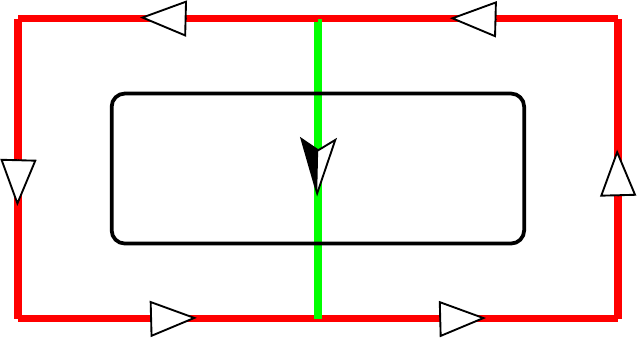}\label{fig:immersed squares-m}}
      \end{center}
 \caption{Pairs of squares in distinct tetrahedra detecting non-orientable surfaces or lens spaces (or lens space summands in case the manifold is reducible). Edges of distinct colours are distinct. The Klein bottles in \ref{fig:immersed squares d} and \ref{fig:immersed squares i} consist of an annulus in the edge neighbourhood and the annulus on the squares. Notice that some of these pictures are equivalent once one deletes the green diagonal from the hexagon.}
\label{fig:immersed squares}
\end{figure}

We call the map $f\co S \to M$ in the statement of Lemma~\ref{lem:capped 2-square surface} a \emph{pinched 2--square surface} in $M,$ and the embedding $g\co S \to M$ of the proof a \emph{capped 2--square surface}.

Now we take this analysis further in order to detect interesting surfaces in $M$ even if there are more edge identifications amongst two twisted squares. We are not trying to get a complete list, but rather list the situations that arise in this paper, and call the resulting surfaces \emph{surgery 2--square surfaces}. The basic fact we use is that if a simple closed curve on the boundary of a solid torus runs along the longitude $2n$ times, then it bounds a properly embedded surface of non-orientable genus $n$ with one boundary component in the solid torus. For instance, if $n=2$ then the surface is $(P\# P) \setminus \text{disc}$. We call such a curve a $2n$--curve on the solid torus, and we include the case $n=0$ to give a (boundary parallel or meridian) disc.

To illustrate the basic technique, suppose a square of type $G$ in $M$ has its boundary edge oriented coherently. Then the edge is a loop in $M,$ and intersecting the square with a regular tubular neighbourhood of that edge in $M$ gives a 4--curve on the neighbourhood, and hence bounds a $(P\# P) \setminus (\text{disc})$ in the tubular neighbourhood. Moreover, we can cap this off with the complement in the square of the neighbourhood and hence get an embedded Klein bottle in $M.$ One can view the square of type $G$ as the image of an immersion of a 1--square Klein bottle, and the surgery procedure has recovered an embedding. In practice, reference to a domain is not required, as the object given to us is the union of two twisted squares in $M.$

Some constructions with two squares are shown in Figure~\ref{fig:immersed squares}, and they should be interpreted as follows. Let $N$ denote a small regular neighbourhood of the red edge.
Suppose $X \subset M$ is the union of two twisted squares contained in distinct tetrahedra. Then $X$ meets $\partial N$ in the shown black curve(s). The intersection $N \cap X$ is replaced by an embedded surface in $N,$ which has the same boundary curve(s). In almost all cases, $X \cap \partial N$ is connected, and (after choosing an orientation) its class in $H_1(N)$ can be read off from the picture, taking orientations into account. In some cases, $X \cap \partial N$ has one component which is zero in $H_1(N)$ (and is hence capped off with a disc) and another which may or may not be trivial. In Figures~\ref{fig:immersed squares d} and \ref{fig:immersed squares i}, one has two parallel non-trivial curves and these are connected by an annulus in $N.$ In this case, an annulus on $X$ gives an orientation reversing isotopy between these curves; whence the surface is a Klein bottle. We will call a surface that arises as described above a \emph{surgery 2--square surface} in $M.$ 


\subsection{Recognising projective planes using three squares}

We continue the analysis of the previous section with three situations that arise at the end of this paper. Suppose there are three pairwise distinct tetrahedra $\sigma_k$ containing squares $S_k < \sigma_k$ such that the edges are identified as shown in Figure~\ref{fig:square-triple}. In the left-most picture, the subcomplex $X$ formed by these squares has an edge $e$ of degree 6 (shown in red). The intersection of the boundary of a small regular neighbourhood $N$ of $e$ with $X$ results in a curve, which is embedded in $\partial N$ and is null-homotopic in $N.$ It hence bounds a disc in $N.$ This curve also bounds a M\"obius band in $X,$ whence $M$ contains an embedded projective plane. A similar surgery argument results in a projective plane in the middle picture, whilst in the last picture, merely capping is required to yield such a surface.

\begin{figure}[h]
  \begin{center}
     \subfigure[Surgery]{\includegraphics[height=4cm]{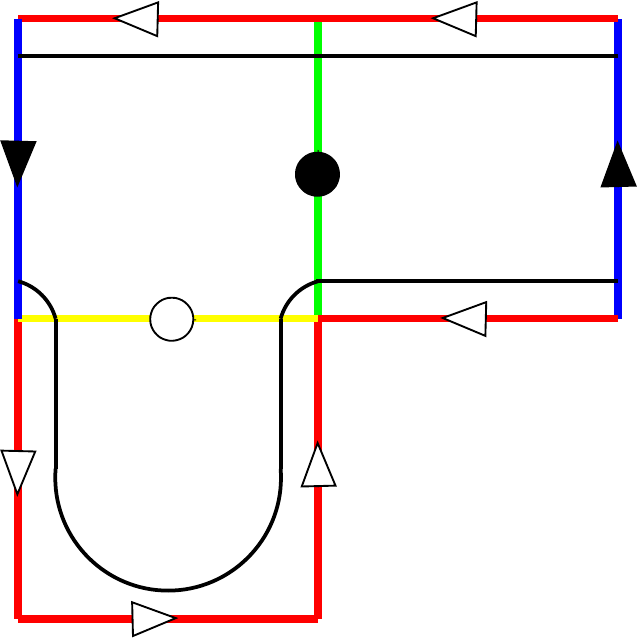} \label{fig:square-triple-a}}\qquad
     \subfigure[Surgery]{\includegraphics[height=4cm]{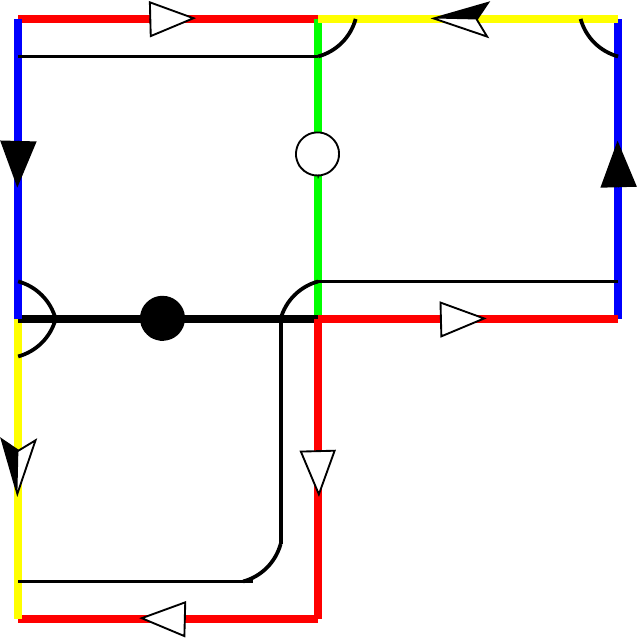} \label{fig:square-triple-b}}\qquad
     \subfigure[Capped]{\includegraphics[height=4cm]{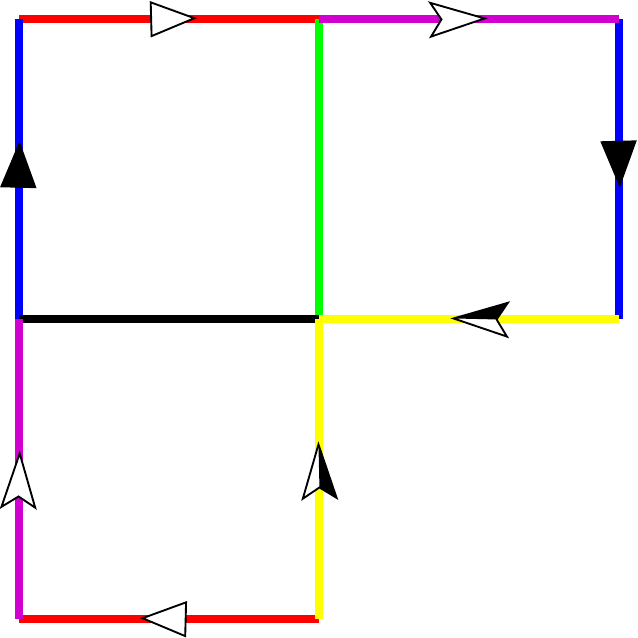} \label{fig:square-triple-c}}
            \end{center}
 \caption{Three squares contained in three pairwise distinct tetrahedra giving a projective plane}
 \label{fig:square-triple}
\end{figure}


\subsection{Partition-types of tetrahedra}

We now describe the combinatorics of a tetrahedron in a face-generic 1--vertex triangulation using the three squares contained in it. The tetrahedron $\sigma$ in $M$ is said to be of \emph{partition-type XYZ} if the three squares supported by it are of partition-types $X,$ $Y$ and $Z$ respectively. The labels are ordered lexicographically.

\begin{lem}\label{lem:edge identifications of tet}
Suppose $M$ is closed, orientable and has a face-generic triangulation. Suppose $\sigma$ is a tetrahedron in $M$, and let $n$ be the number of pairwise distinct edges of $\sigma.$ Then either
\begin{enumerate}
\item $n=6$ and $\sigma$ is of partition-type $AAA;$ or
\item $n=5$ and $\sigma$ is of partition-type $AAC$ or $ABB;$ or
\item $n=4$ and $\sigma$ is of partition-type $AAF,$ $ABE,$ $ACC,$ $BBC$ or $BBD;$ or
\item $n=3$ and $\sigma$ is of partition-type $BBF,$ $BDE$ or $DDD.$
\end{enumerate}
Moreover, in case $DDD,$ all three squares in $\sigma$ are embedded Klein bottles in $M,$ unless one square gives a pinched projective plane and hence and embedded projective plane in $M.$
\end{lem}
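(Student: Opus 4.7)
The approach is to enumerate by $n$, the number of pairwise distinct edges of $\sigma$ in $M$. Two face-generic constraints will be used throughout: (a) no face of $\sigma$ has all three edges identified (else it is a 3--fold or dunce hat), and (b) at most two faces of $\sigma$ have a pair of identified edges (since cones are forbidden and M\"obius faces are capped at two). Lemma~\ref{lem:G-F-D square lemma}(1) excludes squares of type $G$. The key observation is that the boundary of each twisted square consists of four edges of $\sigma$ in cyclic order, alternating between two pairs of opposite edges of $\sigma$; consequently, opposite positions around a square correspond to an opposite-in-$\sigma$ edge pair, and adjacent positions to an adjacent-in-$\sigma$ pair, so that the partition-type of each square can be read directly off the identifications of its four boundary edges.

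The case $n \le 2$ is ruled out because it forces at least four non-triangle faces, contradicting~(b). For $n = 6$, each square is of type $A$: $AAA$. For $n = 5$, the single identified pair is either opposite in $\sigma$ (yielding $ABB$) or adjacent in $\sigma$ (yielding $AAC$). For $n = 4$, a direct analysis of partitions of shapes $(2,2,1,1)$ and $(3,1,1,1)$ under (a) and (b) gives exactly the listed partition-types: two opposite pairs give $BBD$; one opposite plus one adjacent pair give $BBC$; two disjoint adjacent pairs give $AAF$ if the two singleton edges form an opposite pair in $\sigma$ and $ACC$ otherwise; and a face-generic triple of identified edges gives $ABE$. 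For $n = 3$, a similar analysis yields $DDD$ (all three opposite pairs of $\sigma$ as classes), $BBF$ (one opposite pair plus two adjacent pairs), and $BDE$ (a face-generic triple together with a disjoint opposite pair); in particular, the eight partitions consisting of three disjoint adjacent pairs are excluded by~(b) as they each force three M\"obius faces.

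For the final claim, suppose $\sigma$ is of type $DDD$. Each of its three squares is of type $D$, and by Lemma~\ref{lem:G-F-D square lemma}(3) each is an embedded torus, an embedded Klein bottle, or a pinched projective plane (the last producing an embedded projective plane in $M$ via the surgery of Section~\ref{subset:pinched 2--square surface}). To complete the proof, record for each of the three opposite-pair identifications a sign $\epsilon_i \in \{+,-\}$ indicating whether it preserves or reverses the natural vertex orientations of $\sigma$, and compute from $(\epsilon_1, \epsilon_2, \epsilon_3)$ the boundary word of each of the three squares. The topological type of each square is then read off from the standard classification of $4$-gon identifications: words of type $aba^{-1}b^{-1}$ give tori, Klein-type words give Klein bottles, and the word $abab$ gives a projective plane with two distinct vertex classes. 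A direct enumeration of the $2^3 = 8$ sign configurations shows that in every configuration, either all three boundary words are of Klein type (so all three squares are embedded Klein bottles), or at least one is the projective-plane word $abab$; in the latter case, the two vertex classes of that projective plane collapse to a single vertex of $M$ under the edge identifications (all four vertices of $\sigma$ merge to one vertex of $M$ in each of the eight configurations), yielding a pinched projective plane. The main obstacle is the careful tracking of edge orientations through the twisted-square boundaries in the $DDD$ sign-case analysis, and verifying that no sign configuration produces a torus square without simultaneously producing a projective-plane square.
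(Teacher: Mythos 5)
Your proof is correct and follows essentially the same route as the paper's: an enumeration of the edge-identification partitions of $\sigma$ organised by the number $n$ of distinct edges, constrained by the prohibition on cone/3--fold/dunce faces and on more than two M\"obius faces per tetrahedron (the paper merely arranges the cases slightly differently, dispatching type-$F$ squares first before running through $n=6,5,4,3,2$). Your orientation-sign enumeration for the $DDD$ case is actually more detailed than the paper's treatment, which simply asserts that one square gives a projective plane unless all three are Klein bottles; your claimed dichotomy --- every one of the eight sign configurations is either all-Klein or contains the word $abab$ --- does check out.
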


\begin{figure}[h]
  \begin{center}
    \subfigure[AAA]{\includegraphics[height=2.6cm]{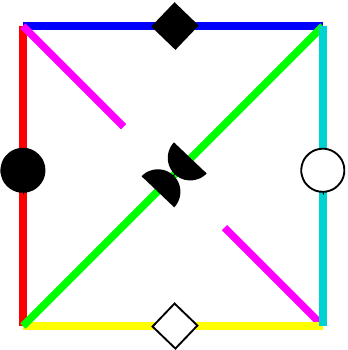}}
    \qquad
     \subfigure[AAC]{\includegraphics[height=2.6cm]{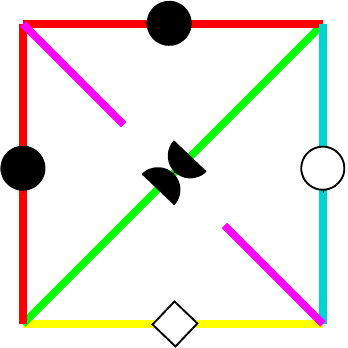}}
  \qquad
     \subfigure[ABB]{\includegraphics[height=2.6cm]{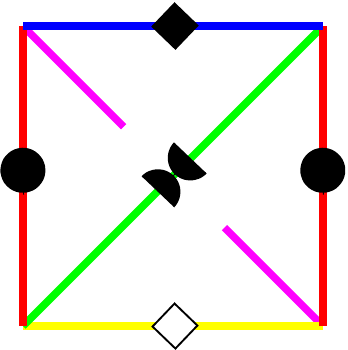}}
  \qquad
     \subfigure[AAF]{\includegraphics[height=2.6cm]{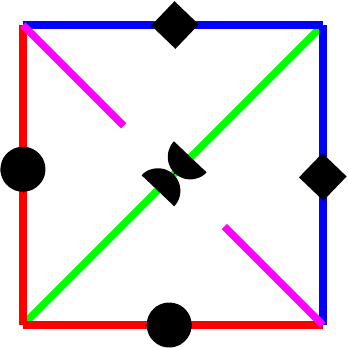}}
     \\
     \subfigure[ABE]{\includegraphics[height=2.6cm]{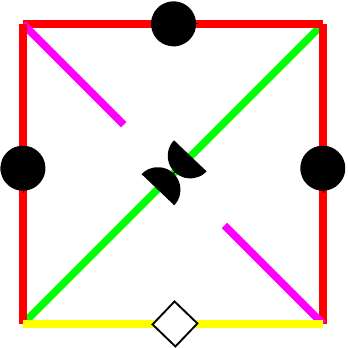}}
    \qquad
     \subfigure[ACC]{\includegraphics[height=2.6cm]{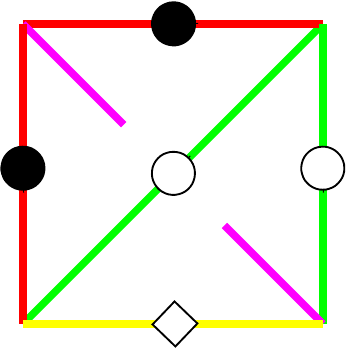}}
   \qquad
     \subfigure[BBC]{\includegraphics[height=2.6cm]{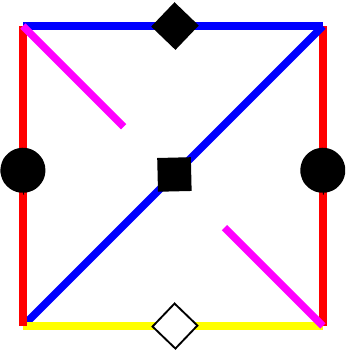}}
      \qquad
     \subfigure[BBD]{\includegraphics[height=2.6cm]{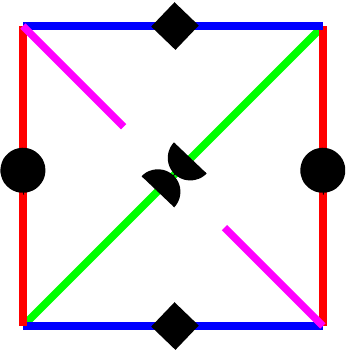}}
     \\
     \subfigure[BBF]{\includegraphics[height=2.6cm]{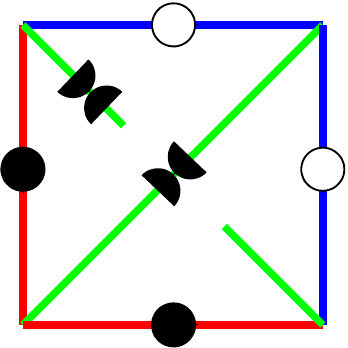}}
    \qquad
     \subfigure[BDE]{\includegraphics[height=2.6cm]{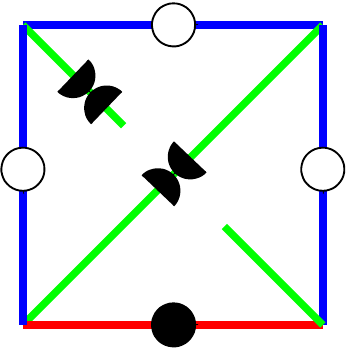}}
   \qquad
     \subfigure[DDD]{\includegraphics[height=2.6cm]{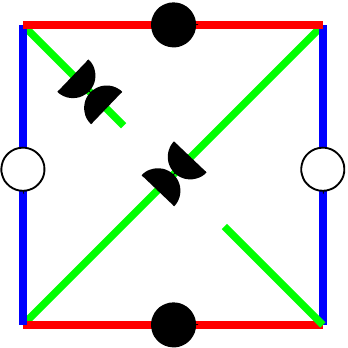}}
      \end{center}
 \caption{The partition types of tetrahedra (edges of the same colour are identified; edges of distinct colours are distinct; faces may or may not be identified)}
\end{figure}

\begin{proof}
Recall that face-generic implies that all faces are M\"obius faces or triangles, and $\sigma$ has at most two M\"obius faces. In particular, no square can be of type $G.$ 

Next suppose that some square is of type $F.$ Since no face is a cone, the two remaining edges of $\sigma$ are distinct from the edges of the square. If these two edges are identified, we have type $BBF$ and $n=3,$ and if they are distinct, we have type $AAF$ and $n=4.$ 

Hence suppose that no square is of type $F$ or $G.$ We enumerate the remaining cases depending on $n.$ First suppose $n=6.$ Then each square must be of partition type $A.$

Suppose $n=5.$ It follows that either a pair of opposite edges is identified, giving case $ABB,$ or a pair of edges incident with a face is identified, giving $AAC.$

Suppose $n=4.$ First assume that three edges are identified. Three given edges are either incident with one vertex, or contained in a unique face or in a unique square. The three edges cannot be incident with one vertex, since then there would be a cone face. Also, the three edges of a face cannot be identified since otherwise there would be a 3--fold face or a dunce face. Hence one of the squares is of type $E.$ Since this square contains exactly two edges of the triangulation of $M,$ the remaining two edges must be distinct and distinct from the edges in $E,$ giving $ABE.$ Next assume that two pairs of edges are identified. If two pairs of opposite edges are identified, we obtain $BBD.$ If one pair of opposite edges is identified and one pair of adjacent edges, the we obtain $BBC.$ If no pair of opposite edges is identified (and hence two pairs of adjacent edges), we obtain $ACC.$

Suppose $n=3.$ No square can be of type $A.$ Suppose there is a square of type $B.$ Then it contains the three distinct edges of $\sigma.$ Enumerating the possible identifications of the remaining two edges with the three edges of the $B$--square gives $BDE,$ $BBF,$ $BBG,$ $BCE.$  We have already assume that no square is of type $F$ or $G,$ and in case $BCE,$ there would be 3 M\"obius faces. Hence the only possibility is $BDE.$
Now then suppose no square is of type $B,$ but there is a square of type $C.$ In each possible case, one finds three or four M\"obius faces. Hence suppose no squares are of type $B$ or $C.$ If there is a square of type $D,$ then one of the remaining edges must be different, and since we don't have a type $B$ square, the remaining edges are identified. This gives $DDD.$ Now one of the squares gives a projective plane unless all three squares are Klein bottles. Last, suppose all squares are of type $E.$ This is not possible.

Suppose $n=2.$ In this case, we can only have squares of type $D$ or $E.$ The only possibility is $DEE,$ which gives four M\"obius faces.
\end{proof}

\begin{cor}\label{cor:tet types}
Suppose $M$ is closed, orientable and has a face-generic triangulation without Klein or projective squares. Then each tetrahedron is combinatorially equivalent to one of eight possibilities; one each falling into the partition types AAA, AAC, ABB, ABE, ACC, BBC, sBBD, sBDE as shown in Figure~\ref{fig:special-tet-types}. In the first six cases, this is just the generic type (up to combinatorial equivalence), but in the latter two, there are prescribed edge identifications. 

If, in addition, the triangulation is face-pair-reduced, then 
\begin{enumerate}
\item case $BBC$ does not occur, and 
\item the two M\"obius faces of every tetrahedron of type $sBDE$ in $M$ are identified (i.e.\thinspace such a tetrahedron is a so-called 1-tetrahedron layered solid torus $\LST(3,2,1)$).
\end{enumerate}
\end{cor}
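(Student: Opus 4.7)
The plan is to begin with the full partition-type classification in Lemma~\ref{lem:edge identifications of tet} and then successively eliminate or restrict cases using the no-Klein-or-projective-square hypothesis. First I would exclude partition-types AAF and BBF, since both contain a type-$F$ square, which is by definition a Klein square. Next, I would exclude type DDD by invoking the concluding sentence of Lemma~\ref{lem:edge identifications of tet}: in any DDD tetrahedron either all three D-squares are Klein bottles or one of them is a projective square, both excluded by hypothesis. This leaves the eight partition-types AAA, AAC, ABB, ABE, ACC, BBC, BBD and BDE.

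For the six types AAA, AAC, ABB, ABE, ACC and BBC, I would enumerate the possible edge identifications up to the natural tetrahedral symmetries and check that the partition pattern alone determines the combinatorial model uniquely, matching the corresponding pictures in Figure~\ref{fig:special-tet-types}. For BBD and BDE, by Lemma~\ref{lem:G-F-D square lemma}(3) the type-$D$ square in such a tetrahedron must be an embedded torus, since any other topological type would yield a Klein or projective square. Requiring the $D$-square to be a torus fixes the relative orientations of its pair of identified opposite edges and, combined with the remaining partition-type identifications, uniquely determines the special combinatorial models sBBD and sBDE of Figure~\ref{fig:special-tet-types}.

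For the face-pair-reduced refinement, I would first eliminate BBC via a disc-subcomplex argument. In a BBC tetrahedron exactly one face is a M\"obius face (the one containing the identified adjacent-edge pair of the $C$-square) and the other three are triangle faces. Direct edge-counting shows that two of these triangle faces share two distinct edges in $M$, namely the identified opposite-edge pair from the $B$-square and the identified adjacent-edge pair from the $C$-square. This yields a disc subcomplex of the type in Lemma~\ref{lem:disc subcomplex}(1), whose interior edges are mapped to distinct edges of $M$. By Definition~\ref{defn:face-pair-reduced}, the two triangle faces would then have to be the same face in $M$; but the only face pairing compatible with this identification would force the two remaining distinct edges of the tetrahedron to coincide, reducing $\sigma$ to three distinct edges and contradicting the BBC pattern.

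For the sBDE case, I would observe that both M\"obius faces of such a tetrahedron share their core edges (both mapped to a single edge by the triple identification of the $E$-square) and also their boundary edges (both mapped to a single edge by the identified opposite-edge pair of the $D$-square). If these two M\"obius faces were distinct in $M$, they would form a disc subcomplex in which the two interior edges are mapped to the same edge (as required) while the two boundary edges would also map to a single edge, violating the distinct-boundary-edges clause of Definition~\ref{defn:face-pair-reduced}. Hence the two M\"obius faces must be identified in $M$, and a direct combinatorial verification shows that the resulting self-pairing realises the 1-tetrahedron layered solid torus $\LST(3,2,1)$. The main obstacle throughout is the careful bookkeeping of edge identifications and orientations, particularly in establishing the BBC contradiction and in verifying that the forced self-identification in sBDE yields precisely the claimed $\LST(3,2,1)$ structure rather than some other orientation-reversing pairing.
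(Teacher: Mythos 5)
Your first paragraph reproduces the paper's own (very short) proof: kill $AAF,$ $BBF,$ $DDD$ via Klein/projective squares, and pin down $sBBD$ and $sBDE$ by forcing the type-$D$ square to be a torus --- for $sBDE$ you should say explicitly, as the paper does, that the absence of cone faces is what fixes the orientations coming from the type-$E$ square. Your argument for claim (2) is also the intended one: in $sBDE$ the two M\"obius faces share both core edge (the triply identified class) and boundary edge (the $D$-square's other class), so if they were distinct they would give a disc as in Figure~\ref{fig:Face-pair-reduced} with $a=b$ \emph{and} $c=d,$ contradicting Definition~\ref{defn:face-pair-reduced}. (Note the paper's printed proof only covers the first paragraph of the statement, so here you are supplying detail the authors omit.)

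Your elimination of $BBC,$ however, has a genuine gap. Write the identifications as $01=23$ (the opposite pair) and $02=03$ (the adjacent pair); the two triangle faces you invoke are then $012$ and $013,$ which indeed share the edges $01$ and $02=03$ of $M.$ But their union is \emph{not} a disc, so Definition~\ref{defn:face-pair-reduced} is vacuous for this pair. The reason is that face-genericity forces the face $023$ to be a M\"obius face rather than a cone, which means the identification of $02$ with $03$ must interchange endpoints ($0\mapsto 3,$ $2\mapsto 0$). Gluing the triangles $012$ and $013$ along $01$ (the identity, since this is literally a common edge of $\sigma$) and along $02\sim 03$ (the endpoint-swapping map) then collapses the six vertices to only two classes, giving $\chi=0$: the resulting complex is an annulus or M\"obius band, not the disc $D$ of Lemma~\ref{lem:disc subcomplex}; the same count rules out the other two pairs of triangle faces. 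The argument can be repaired by pairing the M\"obius face $023$ with one of $012$ or $013$: these faces are necessarily distinct in $M$ (different edge multisets), and depending on the orientation of the identification $01=23,$ exactly one of the gluings $(01\sim 23,\ 02\sim 03)$ or $(01\sim 23,\ 03\sim 02)$ produces a redundant vertex identification and hence a genuine disc whose two interior edges map to the \emph{distinct} edges $01$ and $02$ of $M,$ contradicting face-pair-reducedness. As written, your proof of claim (1) does not go through.
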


\begin{figure}[h]
  \begin{center}
    \subfigure[AAA]{\includegraphics[height=2.5cm]{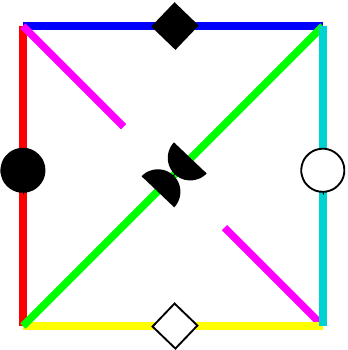}\label{fig:special-tet-types-AAA}}
    \qquad   \qquad
     \subfigure[AAC]{\includegraphics[height=2.5cm]{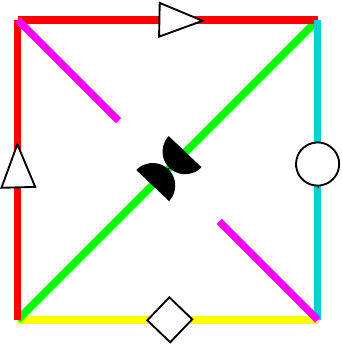}\label{fig:special-tet-types-AAC}}
  \qquad   \qquad
     \subfigure[ABB]{\includegraphics[height=2.5cm]{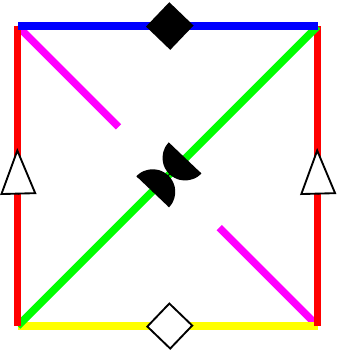}\label{fig:special-tet-types-ABB}}
      \qquad   \qquad
     \subfigure[ABE]{\includegraphics[height=2.5cm]{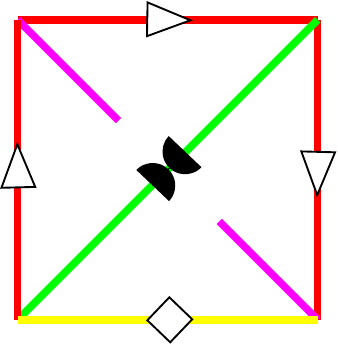}\label{fig:special-tet-types-ABE}}
    \\
     \subfigure[ACC]{\includegraphics[height=2.5cm]{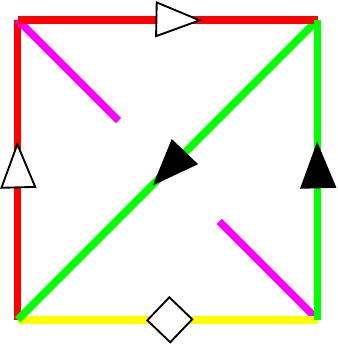}\label{fig:special-tet-types-ACC}}
   \qquad   \qquad
     \subfigure[BBC]{\includegraphics[height=2.5cm]{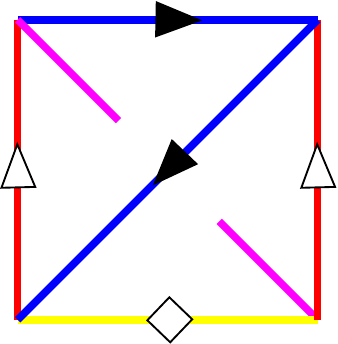}\label{fig:special-tet-types-BBC}}
      \qquad   \qquad
     \subfigure[sBBD]{\includegraphics[height=2.5cm]{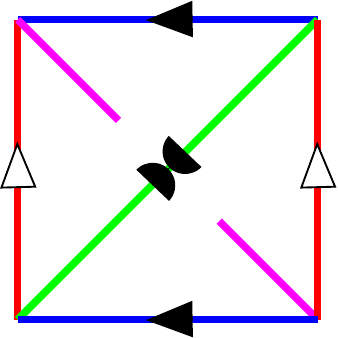}\label{fig:special-tet-types-BBD}}
      \qquad   \qquad
     \subfigure[sBDE]{\includegraphics[height=2.5cm]{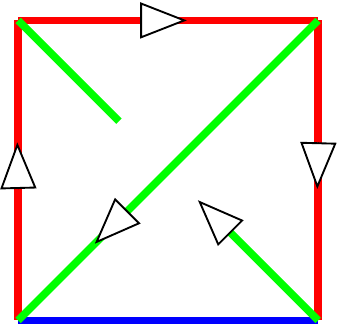}\label{fig:special-tet-types-BDE}}
       \end{center}
 \caption{The remaining possibilities for tetrahedra up to combinatorial equivalence (edges of the same colour are identified; edges of distinct colours are distinct)}
 \label{fig:special-tet-types}
\end{figure}

\begin{proof}
Refer to the types listed in Lemma~\ref{lem:edge identifications of tet}. In each of the cases $AAF,$ $BBF,$ and $DDD,$ there is a Klein or projective square, and hence they cannot occur.

Since we assume there are no Klein or projective squares, the squares of type $D$ in $BBD$ and $BDE$ give tori.   This determines $sBBD,$ and also, together with the hypothesis that there are no cone faces, determines $sBDE.$
\end{proof}



\section{Extremal rays with support in one tetrahedron}
\label{sec:Extremal rays with support in one tetrahedron}

There is an algebraic approach to normal surface theory due to Haken (see \cite{JR}), which arises from the fact that the normal discs in the cell structure of a normal surface match up across faces in the triangulation. Another algebraic approach (which goes back to Thurston and Jaco and can be found in Tollefson~\cite{tollefson}), relies on the key observation that each normal surface is uniquely determined (up to normal isotopy) by the quadrilateral discs in its cell structure, and that the normal surface meets a small neighbourhood of each edge of the triangulation in a disjoint union of discs. Intuitively, these equations arise from the fact that as one circumnavigates the earth, one crosses the equator from north to south as often as one crosses it from south to north. In terms of the abstract neighbourhood of an edge, one picks an orientation of the edge (so that north and south are defined) and observes that triangles meeting the edge remain in one of the hemispheres, whilst quadrilaterals either cross from north to south (in which case the corner of the quadrilateral at $e$ is given sign $-1$) or from south to north (in which case the corner of the quadrilateral at $e$ is given sign $+1$). See Figure~\ref{fig:slopes}. 

\begin{figure}[h]
    \centering
    \subfigure[The abstract neighbourhood]{%
        \label{fig:matchingquadbdry}%
        \includegraphics[height=4cm]{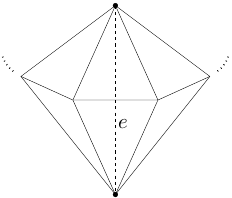}}
    \qquad
    \subfigure[Positive slope $+1$]{%
        \label{fig:matchingquadpos}%
        \includegraphics[height=4cm]{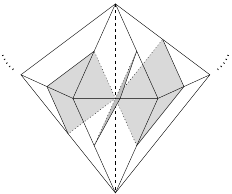}}
    \qquad
    \subfigure[Negative slope $-1$]{%
        \label{fig:matchingquadneg}%
        \includegraphics[height=4cm]{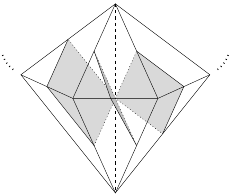}}
    \caption{Slopes of quadrilaterals}
    \label{fig:slopes}
\end{figure}

Given the edge $e$ and a quadrilateral type $q,$ the total weight $\text{wt}_e(q)$ of $q$ at $e$ is the sum of signs over all corners of $q$ at $e$ (where the empty sum has value $0$). For the quadrilateral type $q$ introduce the variable $x(q).$
The $Q$--matching equation at $e$ is then defined as $0 = \sum_q \text{wt}_e(q) x(q),$ where the sum is taken over all quadrilateral types. For instance, placing a quadrilateral of each type in a fixed tetrahedron $\sigma$ gives a so-called \emph{tetrahedral solution} to these equations: this has $x(q)=1$ for each quadrilateral type $q$ supported by $\sigma$ and $x(q)=0$ for all others. See \cite{tollefson, tillus-normal} for more details.

We call a solution to the matching or $Q$--matching equations a \emph{1--quad type solution} if it has exactly one non-zero quadrilateral coordinate, and a \emph{2--quad type solution} if it has exactly two non-zero quadrilateral coordinates. The non-zero quadrilateral coordinates need not have the same sign. If $x$ is \emph{any} solution to the matching or $Q$--matching equations with support in exactly one tetrahedron, then one may add a multiple of a tetrahedral solution to $x$ so that the result has at most two quadrilateral coordinates that are non-zero and so that both are non-negative.

\begin{figure}[h]
  \begin{center}
    \subfigure[A]{\includegraphics[height=2.5cm]{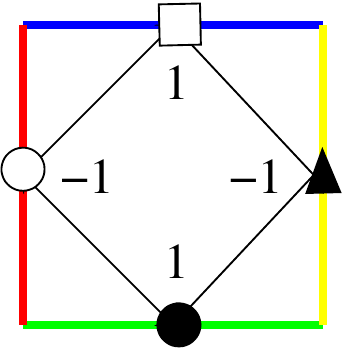}}
    \qquad \qquad
     \subfigure[B]{\includegraphics[height=2.5cm]{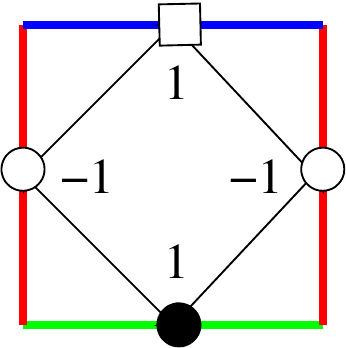}}
\qquad \qquad
     \subfigure[B]{\includegraphics[height=2.5cm]{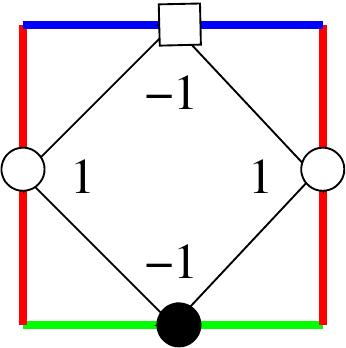}}
  \\
     \subfigure[C]{\includegraphics[height=2.5cm]{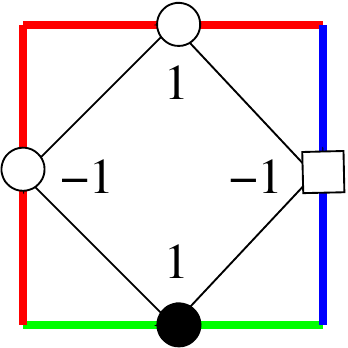}}
  \qquad \qquad
     \subfigure[D]{\includegraphics[height=2.5cm]{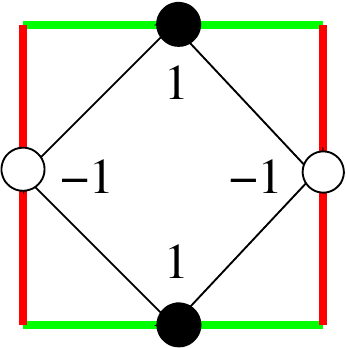}}
     \qquad \qquad
     \subfigure[E]{\includegraphics[height=2.5cm]{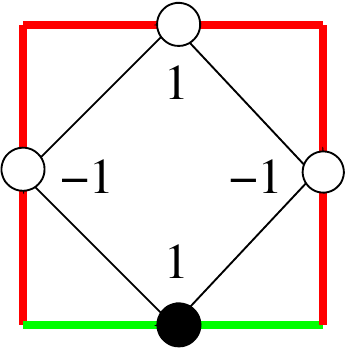}}
    \\\
    \subfigure[E]{\includegraphics[height=2.5cm]{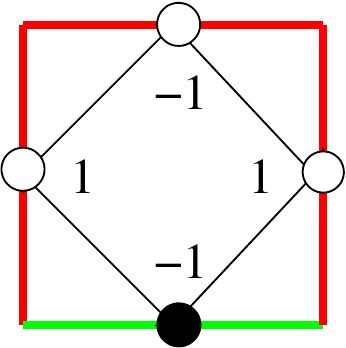}}
    \qquad \qquad
     \subfigure[F]{\includegraphics[height=2.5cm]{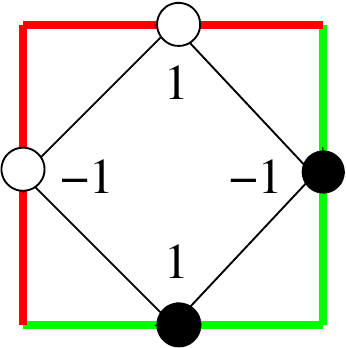}}
   \qquad \qquad
     \subfigure[G]{\includegraphics[height=2.5cm]{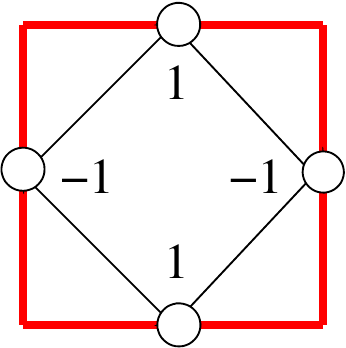}}
      \end{center}
 \caption{The signs of corners of quadrilateral discs shown with the dual twisted squares (edges of the same colour are identified; edges of distinct colours are distinct)}
\label{fig:partition types of quads}
\end{figure}

\begin{lem}\label{lem:1-quad type}
Let $M$ be a closed, orientable 3--manifold with an arbitrary triangulation. The tetrahedron $\sigma$ in $M$ supports a 1--quad type solution if and only if at least one square in $\sigma$ is of type $F$ or $G.$ 

If the triangulation is face-generic, then $M$ supports a 1--quad type solution if and only if it contains a Klein square.

If the triangulation is 0--efficient and no face is a cone, then $M$ supports a 1--quad type solution if and only if it either contains a Klein square or a square is a spine for $L(4,1).$
\end{lem}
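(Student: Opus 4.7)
The plan is to reduce each part to a combinatorial check on the partition type of the square dual to the quadrilateral $q$ supporting the solution. The key input, recorded in Figure~\ref{fig:partition types of quads} and visible in the twisted square realisation of Figure~\ref{fig:quad+square}, is that the four corners of any quadrilateral, traced around its boundary cycle, carry signs $+,-,+,-$ alternately.

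For part (1), a $1$-quad type solution with $x(q) = c \neq 0$ supported on a single quadrilateral $q$ in a tetrahedron $\sigma$ satisfies the $Q$-matching equations precisely when $\text{wt}_e(q) = 0$ at every edge $e$ of the triangulation. I would run through the seven partition types in turn. In type $A$ each edge of $M$ is hit by a single corner, giving weight $\pm 1;$ in types $B,$ $C,$ $E$ at least one edge still receives only a single corner after the identifications; in type $D$ the pattern $abab$ places the two identified edges at the non-adjacent cycle positions $1$ and $3,$ so the alternating signs align to give weight $\pm 2$ at each identified edge; only in types $F$ (pattern $aabb$) and $G$ (pattern $aaaa$) do the identifications pair cyclically-adjacent corners, whose $+$ and $-$ signs cancel. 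The converse direction of (1) is immediate on taking $x(q) = 1.$

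For part (2), Lemma~\ref{lem:G-F-D square lemma}(1) rules out type $G$ in any face-generic triangulation, so combined with (1), $M$ supports a $1$-quad solution if and only if some tetrahedron contains a type $F$ square. Since by the definitions in Section~\ref{subsec:Squares in triangulations} every type $F$ square is a Klein square, this yields the iff.

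For part (3), the face-generic case is covered by (2). In a non-face-generic $0$-efficient triangulation, the only further possibility from (1) is a type $G$ square, whose four boundary edges all identify to a single edge $a$ and whose boundary winds four times around $a.$ A coherent winding makes the square together with $a$ a spine for $L(4,1),$ matching the lemma's phrasing; the remaining orientation patterns are ruled out by $0$-efficiency, as they would force embedded projective planes or reducing $2$-spheres incompatible with the hypotheses. The main obstacle is the sign-alternation analysis in part (1), particularly the type $D$ case, where the non-adjacency of the paired corners in the $abab$ pattern is essential for deducing $\text{wt}_e(q) = \pm 2 \neq 0$ rather than $0.$
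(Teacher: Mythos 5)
Your part (1) is essentially the paper's argument, carried out more explicitly: the paper simply records that signs agree at opposite corners and differ at adjacent corners of the quadrilateral and then reads the answer off Figure~\ref{fig:partition types of quads}, whereas you do the case check by hand, including the key observation that in type $D$ the identified edges sit at \emph{opposite} positions of the boundary cycle, so the alternating signs reinforce to $\pm 2$ rather than cancel. That is correct and is the substantive content of the first assertion.

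For parts (2) and (3) the paper's proof is a single sentence (``the remainder follows from Lemma~\ref{lem:G-F-D square lemma}''), and your treatment is at a comparable level, but one step deserves a flag. In part (2) you deduce the equivalence from ``every type $F$ square is a Klein square,'' which only yields the implication (1--quad solution $\Rightarrow$ Klein square). The converse needs (Klein square $\Rightarrow$ type $F$ square), and by the paper's own definitions in \S\ref{subsec:Squares in triangulations} a Klein square may instead be of type $D$, whose reduced weights are $(-2\mid 2)$ and which therefore supports no 1--quad type solution on its own. So, as written, neither your argument nor the paper's one-line proof establishes the ``if'' direction in that generality; the direction that is actually used later (in Corollary~\ref{cor:face-generic+no Klein squares => no cluster in one tet}) is the one you both prove. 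Likewise, in part (3) your mechanism for excluding incoherently wound type-$G$ squares (``projective planes or reducing spheres'') is not the one the paper's framework supplies: adjacent identified edges of the square force the containing face to be a cone or a M\"obius band, cones are excluded by 0--efficiency (Proposition~\ref{pro:0-efficient summary}), and a tetrahedron with four M\"obius faces is exactly the configuration whose analysis in \S\ref{subsec:Faces in triangulations} produces the spine for $L(4,1)$. These are refinements of detail rather than a different method; your overall route coincides with the paper's.
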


\begin{proof}
In an orientable 3--manifold, the signs at the corners of a quadrilateral disc in a tetrahedron are the same at opposite corners and opposite at adjacent corners (see \cite{tillus-normal}). A quadrilateral disc has its four corners on the boundary of a twisted square; see Figure~\ref{fig:partition types of quads}. There is a 1--quad type solution if and only if the signs of the square at each edge sum to zero. It can now be deduced from the figure that there is a 1--quad type solution if and only if there is a twisted square of type $F$ or $G.$
This proves the first assertion. 

The second assertion follows as in the proof of Lemma~\ref{lem:G-F-D square lemma}.

Hence assume that the triangulation is 0--efficient and no face is a cone. Then a square of type $F$ is a Klein square and a square of type $G$ is a spine for $L(4,1).$ 
\end{proof}

The lemma implies that the existence of a 1--quad type solution is easy to deduce from the combinatorics of a triangulation.

\begin{pro}\label{pro:2-quad type in 1 tet}
Suppose a triangulation of the closed, orientable 3--manifold $M$ supports no 1--quad type solution. 

If the triangulation is face-generic, then there is no 2--quad type solution supported by precisely one tetrahedron $\sigma.$

If the triangulation is 0--efficient and there is a 2--quad type solution supported by precisely one tetrahedron $\sigma$, then either a face of $\sigma$ is a cone (in which case $M$ is homeomorphic with $S^3$) or $\sigma$ is of type $DEE$ (in which case $M$ is homeomorphic with $L(5)$). 
\end{pro}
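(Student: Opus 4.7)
The plan is to translate the existence of a 2--quad type solution supported on a single tetrahedron $\sigma$ into a family of $Q$--matching equations indexed by the edges of $M$, and then rule out positive solutions by a combinatorial case analysis based on the partition--type of $\sigma.$ Denote the three pairs of opposite edges of $\sigma$ by $P_1, P_2, P_3$ with dual quadrilaterals $q_1, q_2, q_3.$ Orientability of $M$, together with the fact that opposite corners of a quadrilateral disc carry equal signs while adjacent corners carry opposite signs, implies that for a consistent choice of global orientations $q_i$ has a fixed sign at every corner on any edge of $P_j.$ Computing directly, for each edge $e$ of $M$ with $k_j$ pre--images in $P_j,$ one obtains (up to a global sign)
\[
\text{wt}_e(q_1) = k_2 - k_3, \qquad \text{wt}_e(q_2) = k_3 - k_1, \qquad \text{wt}_e(q_3) = k_1 - k_2.
\]
The $Q$--matching equation at $e$ for a 2--quad type solution on $(q_a, q_b)$ with $x_a, x_b > 0$ then reads $\text{wt}_e(q_a)\, x_a + \text{wt}_e(q_b)\, x_b = 0,$ and by Lemma~\ref{lem:1-quad type} the absence of 1--quad type solutions forbids squares of $\sigma$ of types $F$ or $G.$

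For the face--generic conclusion, Lemma~\ref{lem:edge identifications of tet} restricts the partition--type of $\sigma$ to one of $AAA$, $AAC$, $ABB$, $ABE$, $ACC$, $BBC$, $BBD$, $BDE$, $DDD$ (the types $AAF$ and $BBF$ being excluded by the absence of $F$--squares). For each such type the list of edges of $M$ in the image of $\sigma$ together with their profiles $(k_1, k_2, k_3)$ is immediate from the combinatorics, and a routine inspection of the resulting weight vectors shows that for every choice of pair $(q_a, q_b)$ there exists at least one edge $e$ at which $\text{wt}_e(q_a) = 0$ and $\text{wt}_e(q_b) \neq 0;$ this forces $x_b = 0,$ contradicting the 2--quad type hypothesis.

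For the 0--efficient conclusion, one must additionally allow partition--types with cone, dunce or 3--fold faces, or with more than two M\"obius faces, since 0--efficiency is weaker than face--generic. If $\sigma$ has a cone face then Proposition~\ref{pro:0-efficient summary}(4) yields $M = S^3,$ which is the first alternative. Otherwise, a case analysis of edge--identification profiles with few distinct edges, excluding those introducing $F$-- or $G$--squares (which would contradict the no 1--quad type hypothesis), isolates $DEE$ as the only new partition--type that can support a 2--quad type solution; in $DEE$ the two edges $\alpha, \beta$ have profiles $(k_1,k_2,k_3) = (1,0,2)$ and $(1,2,0),$ yielding the explicit 2--quad type solutions $(1,2,0)$ and $(1,0,2).$ To conclude $M = L(5)$ in the $DEE$ case: Proposition~\ref{pro:0-efficient summary}(2) gives a single vertex, and since $DEE$ has $n = 2$ edges, Euler characteristic forces $F = T + 1;$ the four M\"obius faces of $\sigma$ must then pair within $\sigma$ itself (else 0--efficiency or the absence of cone faces would be violated), producing a one--tetrahedron triangulation whose face pairings, compared against the small--triangulation list in \S2.2, identify $M$ with $L(5).$

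The case analysis for the face--generic part is tedious but mechanical once the weight formula above is in hand; it can be streamlined by observing that the quantities $k_2 - k_3$ and $k_1 - k_3$ must have the same sign (or vanish) at every edge. The main obstacle is in the $DEE$ case of Part 2, namely verifying that 0--efficiency, the single vertex hypothesis and the explicit edge combinatorics force $\sigma$ to constitute the entire triangulation of $M,$ rather than being embedded in a larger one that could a priori give a different manifold.
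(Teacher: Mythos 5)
Your face-generic argument is sound but takes a genuinely different route from the paper: you enumerate the admissible partition-types of $\sigma$ from Lemma~\ref{lem:edge identifications of tet} and inspect the weight vectors type by type, whereas the paper works in the opposite direction. It normalises the solution to $x(p)=\alpha>\beta=x(q)>0$ (adjusting by a tetrahedral solution), writes the weights on the six edges of $\sigma$ as $\alpha,\alpha,\beta-\alpha,\beta-\alpha,-\beta,-\beta$, and observes that the vanishing of the edge sums forces each positive-weight edge to be identified with exactly two negative-weight edges. That single observation collapses everything to two identification patterns up to symmetry: three edges incident with a vertex are identified (hence a cone face), or the $DEE$ pattern (which requires $\alpha=2\beta$) --- and it needs no prior classification of tetrahedra. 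Your enumerative check is nonetheless valid: the claim that for each admissible type and each pair $(q_a,q_b)$ some edge has $\mathrm{wt}_e(q_a)=0\neq\mathrm{wt}_e(q_b)$ does hold for all nine types, so the first assertion is proved, albeit with many more cases.

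The 0--efficient half has two genuine gaps. First, the sentence ``a case analysis \ldots isolates $DEE$ as the only new partition-type'' is precisely the content that needs proving; outside the face-generic world the edge-identification patterns are no longer controlled by Lemma~\ref{lem:edge identifications of tet}, and you need the weight-pattern argument above (or an equivalent) to reduce to the cone-face-or-$DEE$ dichotomy. Second, your derivation of $M=L(5)$ in the $DEE$ case is incorrect as written: the Euler characteristic computation assumes that the triangulation of $M$ has only the two edges and the faces of $\sigma$, i.e.\ that $\sigma$ is the entire triangulation, but $n=2$ only counts the distinct edges \emph{of $\sigma$}, and nothing prevents the triangulation from containing further tetrahedra, edges and faces. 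The paper avoids this entirely: a twisted square together with a face of the $DEE$ tetrahedron forms a spine for $L(5)$ (its regular neighbourhood is a punctured $L(5)$ with sphere boundary), and irreducibility of $M$ from Proposition~\ref{pro:0-efficient summary}(1) then yields $M\cong L(5)$ regardless of the size of the triangulation. You correctly flagged this step as the main obstacle, but the route you propose --- forcing a one-tetrahedron triangulation --- does not close it.
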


\begin{proof}
Suppose the 2--quad type solution $x$ does not vanish on the two quadrilateral coordinates $p, q < \sigma.$ If $x(p)= x(q),$ then one can adjust by a tetrahedral solution to obtain a 1--quad type solution which does not vanish precisely on the remaining quadrilateral coordinate supported by $\sigma,$ contradicting the assumption that there is no 1--quad type solution.

After possibly adjusting by a tetrahedral solution and relabelling, we may assume that $x(p)>x(q)>0.$ Let $\alpha=x(p)$ and $\beta=x(q).$ Since the space of all solutions to the $Q$--matching equations is a rational polyhedral cone and there is no 1--quad type solution, it follows that $\alpha$ and $\beta$ are linearly dependent over $\Q.$ This fact, however, is irrelevant.

Up to orientation, we may assume that the weights on the six edges of $\sigma$ are $\alpha, \alpha, \beta-\alpha, \beta-\alpha, -\beta, -\beta.$ Since there are precisely two positive weights, and we assume that there is no 1--quad type solution, it follows that the two edges of $\sigma$ with weight $\alpha$ are distinct in $M.$ Since $\alpha\neq \beta,$ is follows that each edge of positive weight is identified with at least two edges of negative weight. Whence with precisely two edges of negative weight. Up to symmetry and ignoring orientation of edges, this gives precisely two possible patterns for the identification of the edges of $\sigma.$ In one case, there are three edges incident with a vertex of $\sigma,$ which must be identified. But then a face is a cone, giving a contradiction when the triangulation is face-generic, and the conclusion that $M$ is $S^3$ when the triangulation is 0--efficient.

In the other case, $\sigma$ is of type $DEE$ and a square and a face of $\sigma$ form a spine for $L(5).$ However, there is no tetrahedron of type $DEE$ in a face-generic triangulation according to Corollary~\ref{cor:tet types}.
\end{proof}

\begin{cor} \label{cor:face-generic+no Klein squares => no cluster in one tet}
If there is a 1--quad type or 2--quad type solution supported by a single tetrahedron in a 0--efficient triangulation, then the underlying manifold is $S^3$, $L(5)$ or contains an embedded Klein bottle. Moreover, a face-generic triangulation without Klein squares does not admit such a solution.
\end{cor}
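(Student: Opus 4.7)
The plan is to deduce the statement directly by combining the three preceding results---Lemma~\ref{lem:1-quad type}, Proposition~\ref{pro:2-quad type in 1 tet} and Lemma~\ref{lem:G-F-D square lemma}---together with the basic properties of $0$--efficient triangulations listed in Proposition~\ref{pro:0-efficient summary}. I split the analysis according to whether the given solution is of $1$--quad or $2$--quad type.

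Suppose first there is a $1$--quad type solution. The $0$--efficient incarnation of Lemma~\ref{lem:1-quad type} yields that the supporting tetrahedron either contains a Klein square or one of its squares is a spine for $L(4,1).$ In the Klein square case, the square is of partition type $F$ or $D$; Lemma~\ref{lem:G-F-D square lemma}(2) handles type $F$ directly, while type $D$ forces, by Lemma~\ref{lem:G-F-D square lemma}(3), either an embedded projective plane or an embedded Klein bottle in $M.$ The projective plane alternative is ruled out by Proposition~\ref{pro:0-efficient summary}(1) together with the standard observation that an embedded projective plane in an orientable, irreducible $3$--manifold forces the manifold to be $\RR P^3.$ In the spine case, $M = L(4,1),$ which is classically known to contain an embedded Klein bottle. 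Suppose next there is a $2$--quad type solution supported in a single tetrahedron $\sigma.$ If a $1$--quad type solution also exists, the previous paragraph applies; otherwise, the $0$--efficient incarnation of Proposition~\ref{pro:2-quad type in 1 tet} says that either a face of $\sigma$ is a cone and $M = S^3,$ or $\sigma$ is of partition type $DEE$ and $M = L(5).$

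The ``Moreover'' clause follows by contrapositive chaining: under the face-generic hypothesis and the absence of Klein squares, the face-generic case of Lemma~\ref{lem:1-quad type} rules out any $1$--quad type solution, and the face-generic case of Proposition~\ref{pro:2-quad type in 1 tet} then rules out any $2$--quad type solution supported in a single tetrahedron. The only step that requires any genuine care, as opposed to quotation of the preceding results, is the verification in the $1$--quad case that a type $D$ Klein square in a $0$--efficient triangulation yields an embedded Klein bottle rather than merely a projective plane; the remainder of the argument is a matter of assembly.
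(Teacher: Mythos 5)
Your overall assembly is correct and is essentially the paper's intended argument: the corollary is stated without proof precisely because it is a direct concatenation of Lemma~\ref{lem:1-quad type} and Proposition~\ref{pro:2-quad type in 1 tet}, and your handling of the 2--quad case (splitting on whether a 1--quad type solution exists, so that the proposition's hypothesis is met) and of the ``Moreover'' clause is exactly right.

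There is, however, one slip in your 1--quad case analysis, and it is in the place you single out as needing ``genuine care.'' By the first assertion of Lemma~\ref{lem:1-quad type}, the square dual to the non-zero quadrilateral of a 1--quad type solution is of partition type $F$ or $G$ --- never of type $D$, since a type $D$ quadrilateral has reduced weights $(-2\mid 2)$ and cannot balance on its own. So your careful elimination of the type $D$ projective-plane alternative via Proposition~\ref{pro:0-efficient summary}(1) is addressing a case that does not arise. Conversely, the case you do need --- a type $G$ square --- is not covered by Lemma~\ref{lem:G-F-D square lemma}, whose parts (2) and (3) concern only types $F$ and $D$; this is exactly where both alternatives of the lemma's 0--efficient statement live (the $L(4,1)$ spine, and the remaining ``Klein square'' possibility). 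To close it, either invoke the surgery construction of \S\ref{subset:pinched 2--square surface}: a type $G$ square with coherently oriented boundary edge meets the boundary of a regular neighbourhood of that edge in a 4--curve, which bounds $(P\#P)\setminus(\text{disc})$ there, yielding an embedded Klein bottle; or note that a type $G$ square forces four M\"obius faces, so $M=L(4)$ or $L(5)$ by the unnumbered lemma of \S\ref{subsec:Faces in triangulations}. With that substitution the proof is complete; the remaining (shared with the paper) looseness is that Lemma~\ref{lem:G-F-D square lemma} formally assumes face-genericity, but the 0--efficient exceptions are small lens spaces that are either in the conclusion or contain Klein bottles.
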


As an example, the Seifert fibered space $S^2(\;(2,1),\; (2,1),\; (3,-1)\;)$ has a minimal 4--tetrahedron triangulations with a 1-quad type solution; this is triangulation \#1 of this manifold in Burton's closed, orientable manifold census \cite{Regina}. This triangulation is 0--efficient, face-generic and contains a Klein square.

\begin{cor}
Let $M$ be a closed, orientable 3--manifold with face-generic triangulation $\tri.$ 
If there are no Klein squares, then each tetrahedral solution lies on an extremal ray in $Q(\tri),$ the polyhedral cone of all non-negative solutions to the $Q$--matching equations.
\end{cor}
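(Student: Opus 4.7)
The plan is to assume the tetrahedral solution $x$ supported on a single tetrahedron $\sigma$ decomposes as $x = y + z$ with $y, z \in Q(\tri)$, and to show both $y$ and $z$ are non-negative multiples of $x$. Since $x$ has support equal to the three quadrilateral coordinates of $\sigma$ and $y, z \ge 0$, both summands are automatically supported on $\sigma$, so the whole argument takes place inside a single tetrahedron.

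The crucial input is that no $1$--quad type solution exists under the hypotheses: Lemma~\ref{lem:1-quad type} characterises $1$--quad type solutions in face-generic triangulations as arising from a square of type $F$ or $G$, while Lemma~\ref{lem:G-F-D square lemma} forbids type $G$ in any face-generic triangulation and type $F$ squares are Klein squares by definition. Proposition~\ref{pro:2-quad type in 1 tet} then rules out $2$--quad type solutions supported in a single tetrahedron. Combining these, every non-zero non-negative solution in $Q(\tri)$ supported on $\sigma$ must have all three quadrilateral coordinates of $\sigma$ strictly positive.

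To finish, I would write $x = (1,1,1)$ in the quadrilateral coordinates of $\sigma$ and $y = (a_1, a_2, a_3)$ with $a_i \in [0,1]$. If some $a_i = 0$, then $y$ has at most two non-zero coordinates and by the previous paragraph must vanish; symmetrically, if some $a_i = 1$, then $z = x - y$ has a zero coordinate and must vanish, giving $y = x$. Otherwise all $a_i \in (0,1)$, and setting $t = \min_i a_i$, the element $y - t x$ is a non-negative solution supported on $\sigma$ with at least one zero coordinate, hence vanishes; thus $y = t x$ is a non-negative multiple of $x$, and $x$ lies on an extremal ray of $Q(\tri)$.

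I do not expect a serious obstacle: the argument reduces entirely to the two exclusion results (Lemma~\ref{lem:1-quad type} and Proposition~\ref{pro:2-quad type in 1 tet}) already established, together with an elementary manipulation inside a single tetrahedron. The only point requiring care is verifying that type $F$ squares are exactly the Klein squares excluded by hypothesis, which is immediate from the topological classification of squares preceding Figure~\ref{fig:square-topology}.
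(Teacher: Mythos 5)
Your proposal is correct and is exactly the intended deduction: the paper states this corollary without a written proof precisely because it follows from Lemma~\ref{lem:1-quad type} and Proposition~\ref{pro:2-quad type in 1 tet} (summarised in Corollary~\ref{cor:face-generic+no Klein squares => no cluster in one tet}) by the standard argument you give, namely that any summand in a decomposition of the tetrahedral solution is non-negative, hence supported in the same tetrahedron, and must either vanish or have full support there, after which subtracting $t\,x$ with $t=\min_i a_i$ forces proportionality. The one point you flag --- that the absence of Klein squares together with face-genericity rules out squares of type $F$ and $G$ and hence all $1$--quad type solutions --- is indeed supplied by Lemma~\ref{lem:G-F-D square lemma} and the second assertion of Lemma~\ref{lem:1-quad type}, so there is no gap.
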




\section{Clusters of three 2--quad type solutions}
\label{sec:Clusters of three 2--quad type solutions}

We are now in a position to state, and prove, the main application of our study of face-pair-reduced, face-generic triangulations:

\begin{thm}\label{thm:cluster}
Suppose $M$ is a closed, orientable 3--manifold with face-pair-reduced, face-generic triangulation such that 
\begin{enumerate}
\item there are no Klein squares,
\item no two squares are identified as shown in Figure~\ref{fig:immersed squares},
\item no three squares are identified as shown in Figure~\ref{fig:square-triple},
\item no capped 2--square surface is a non-separating torus,
\item no capped 2--square surface is non-orientable.
\end{enumerate}
Then there is no cluster of three 2--quad type solutions. 
\end{thm}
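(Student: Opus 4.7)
The plan is a proof by contradiction. Suppose a cluster of three 2--quad type solutions exists with common tetrahedron $\sigma.$ By hypothesis (1) and Lemma~\ref{lem:1-quad type}, no 1--quad type solution exists; Proposition~\ref{pro:2-quad type in 1 tet} then rules out any 2--quad solution supported in a single tetrahedron. Hence each of the three solutions has support in exactly two distinct tetrahedra. Because the three quadrilaterals $p_1,p_2,p_3$ of $\sigma$ all lie in the union of supports and each solution can contain at most one of them (else it would be supported in $\sigma$ alone), each solution pairs exactly one $p_i$ with some quadrilateral $q_i$ in a tetrahedron $\sigma_i\neq\sigma.$

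Next I would translate the $Q$--matching equations for each pair $(p_i,q_i)$ into combinatorial constraints on the dual twisted squares $S_i\subset\sigma$ and $T_i\subset\sigma_i.$ At every edge $e$ of $M,$ the signed corner weights of $p_i$ and $q_i$ at $e$ must either both vanish or be non-zero with the same ratio $-b_i/a_i,$ where $a_i=x(p_i)$ and $b_i=x(q_i).$ Combined with hypothesis (1) and Lemma~\ref{lem:G-F-D square lemma}, which exclude Klein, projective and type--$G$ squares, this restricts $T_i$ (given the partition type of $S_i$) to a short finite list of combinatorial possibilities. For each admissible configuration, the union $S_i\cup T_i$ either maps in as a capped 2--square surface by Lemma~\ref{lem:capped 2-square surface} --- whose topology is restricted by hypotheses (4) and (5) --- or produces a surgery 2--square surface, and hypothesis (2) then eliminates all configurations of Figure~\ref{fig:immersed squares}.

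By Corollary~\ref{cor:tet types} the tetrahedron $\sigma$ is combinatorially one of $AAA, AAC, ABB, ABE, ACC, sBBD, sBDE.$ In each type, the three squares $S_1, S_2, S_3$ in $\sigma$ pairwise share two opposite edges of $\sigma,$ so the identifications forced between $\partial S_i$ and $\partial T_i$ propagate across $i=1,2,3.$ The crux is to verify that, for every type of $\sigma$ and every consistent assignment of $T_1,T_2,T_3,$ the resulting configuration in $M$ is obstructed: some pair $(S_i,T_i),$ $(T_i,T_j)$ or $(S_i,T_j)$ realises a forbidden configuration from Figure~\ref{fig:immersed squares}, or three of the squares realise a configuration from Figure~\ref{fig:square-triple}, or a capped 2--square surface arising from the construction violates (4) or (5).

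The hard part will be this final case analysis. The $AAA$ case, with six distinct edges in $\sigma,$ requires the most bookkeeping because the three $T_i$ must interlock with six distinct edges of $\sigma;$ the highly-identified cases $sBBD$ and $sBDE$ are delicate in the opposite direction --- the repeated edges of $\sigma$ force multiple $T_i$ to meet at the same edge of $M,$ which directly activates the three-square configurations of Figure~\ref{fig:square-triple}. Careful enumeration, using the type restrictions on $T_i$ from the previous paragraph and the sharing of edges among the $S_i,$ should exclude every case.
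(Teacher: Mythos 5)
Your skeleton matches the paper's: rule out single--tetrahedron support via Lemma~\ref{lem:1-quad type} and Proposition~\ref{pro:2-quad type in 1 tet}, classify the admissible companion types for each pair of dual twisted squares using the $Q$--matching equations, then run a case analysis over the combinatorial type of the common tetrahedron from Corollary~\ref{cor:tet types}. But there is a genuine gap in your closing mechanism. You assert that every surviving configuration is killed by one of: a two--square configuration from Figure~\ref{fig:immersed squares}, a three--square configuration from Figure~\ref{fig:square-triple}, or a capped 2--square surface violating (4) or (5). That list is not sufficient. Several configurations survive all five hypotheses and must be excluded by arguments of a different kind: (i) the face-pair-reduced property forces specific face identifications (two distinct faces sharing two consecutive distinct edges cannot both exist), and in an oriented triangulation these forced identifications are sometimes impossible because they would have to be orientation-preserving, or they force the whole triangulation to consist of two tetrahedra (which is why the proof also needs the standing assumption of at least three tetrahedra); (ii) in the cases $ABE$, $ACC$, $AAC$ and one orientation of $AAA$, the decisive contradiction is that certain normal arcs assemble into two simple closed curves on the vertex link meeting transversely in a single point, impossible on a sphere. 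Without these orientation and vertex-link arguments the enumeration does not close.

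A second, smaller gap concerns the organisation of the case analysis. You plan to treat all eight tetrahedron types symmetrically and guess that the heavily identified types $sBBD$ and $sBDE$ are handled by Figure~\ref{fig:square-triple}. The efficient (and, as far as I can see, necessary) route is to first show that no square of the common tetrahedron can be of partition type $B$: a type-$B$ quadrilateral only admits a type-$B$ companion, and the unique surviving $(B,B)$ identification requires both squares to be annulus squares, whereas in $ABB$, $sBBD$ and $sBDE$ at least one $B$--square is a M\"obius square; the one remaining type $ABE$ is then excluded by the vertex-link argument above. This reduces the analysis to $AAA$, $AAC$ and $ACC$. You should also make explicit the dichotomy between the companion tetrahedra $\sigma_1,\sigma_2,\sigma_3$ being pairwise distinct (which is what activates Figure~\ref{fig:square-triple}) and coinciding (which is where the vertex-link, orientation and non-separating-torus arguments are needed); your sketch conflates these two regimes.
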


\begin{rem}
The two minimal triangulations of $\RR P^3$ have clusters of three 2--quad type solutions. However, one of these triangulations is face-generic and not face-pair-reduced, and the other is not face-generic.
\end{rem}

\begin{proof}[Proof of Theorem~\ref{thm:main} using Theorem~\ref{thm:cluster}]
Let $M$ be a closed, orientable 3--manifold. We may suppose that $M$ is irreducible and atoroidal (since otherwise we are in the first or second case) and that we are given a minimal triangulation. According to \S\S\ref{subsec:Faces in triangulations}--\ref{subsec:Pairs of faces in triangulations}, either the triangulation is face-pair-reduced and face-generic or $M$ is a Seifert fibered space (which yields the third case). 
Hence assume that the minimal triangulation is face-pair-reduced, face-generic and that we have a cluster of three 2-quad type solutions. Then one of the five hypotheses in Theorem~\ref{thm:cluster} does not hold. 

The negation of (1) implies that $M$ is toroidal or a Seifert fibered space.

The negation of (2) or (5) implies that $M$ contains an embedded non-orientable surface of Euler characteristic $-1,$ $0$ or $1.$ In the first case, there is an embedded non-orientable surface of genus 3, in the second $M$ is toroidal or Seifert fibered and in the third $M$ is real projective 3--space and hence Seifert fibered.

The negation of (3) implies that $M$ is real projective 3--space.

The negation of (4) is that $M$ contains a non-separating torus. This is homologically non-trivial. Since $M$ is irreducible, every sphere is homologically trivial, and hence the torus must be incompressible, and hence $M$ is toroidal.
This concludes the proof of Theorem~\ref{thm:main}.
\end{proof}

\begin{proof}[Proof of Theorem~\ref{thm:cluster}]
For the remainder of this section, suppose the hypotheses of Theorem~\ref{thm:cluster} are satisfied and that the triangulation has at least 3 tetrahedra, but that there is a cluster of three 2--quad type solutions. Suppose that $M$ is oriented and that all tetrahedra are oriented coherently. Denote the solutions $x_1, x_2, x_3$ and suppose $\sigma_0$ is a tetrahedron in their common support. We denote the pairwise distinct quadrilateral types $q_1, q_2, q_3<\sigma_0$ such that $x_i(q_i)\neq 0.$ For each solution $x_i,$ there also is tetrahedron $\sigma_i$ and quadrilateral $p_i < \sigma_i$ such that $x_i(p_i)\neq 0.$ The tetrahedra $\sigma_1, \sigma_2, \sigma_3$ are not necessarily pairwise distinct. However, since the triangulation is face-generic and has no Klein squares, it follows from Corollary~\ref{cor:face-generic+no Klein squares => no cluster in one tet} that $\sigma_1, \sigma_2$ and  $\sigma_3$ are distinct from $\sigma_0.$

\emph{Outline:} A classification of the possible types of tetrahedra in a face-pair-reduced, face-generic triangulation was given in Corollary~\ref{cor:tet types}. We will first focus on one 2--quad type solution $x$ supported on quadrilateral types $q$ and $p$ and determine the possible partition-types for the squares associated with the quadrilateral types. With this information, we then show that $\sigma_0$ has no square of type $B.$ We finally show that $\sigma_0$ has no square of type $A,$ hence contradicting the classification of tetrahedra in Corollary~\ref{cor:tet types}. 

\emph{Orientation:} In some arguments, we will only analyse certain situations up to combinatorial equivalence or up to orientation in order to limit the number of cases to be considered. However, this has to be taken into account when more than one of these situations are put together into a common framework since $M$ is oriented and all tetrahedra are oriented coherently.


\emph{Partition-types of quadrilaterals:}
The $Q$--matching equations and signs of corners of quadrilateral discs have been reviewed in \S\ref{sec:Extremal rays with support in one tetrahedron}. Let $\sigma$ be an oriented tetrahedron in $M,$ containing the quadrilateral disc $q.$ Then $q$ has signs $+1, +1, -1 , -1$ at the edges of a twisted square in $\sigma.$ In $M,$ these signs could be at 4, 3, 2 or 1 distinct edges as shown in Figure~\ref{fig:partition types of quads}. We list these cases according to the partition type of the square containing the corners of the quadrilateral, and refer to this as the \emph{partition type} of the quadrilateral.

The notation we use is explained with an example. The partition $(1, -1\mid1\mid-1)$ indicates that the signs are at three distinct edges in $M$ and the respective weights are $1$ and $-1$ at one edge, $1$ at the second edge and $-1$ at the third edge. Lemma~\ref{lem:1-quad type} implies that no square is of type $F$ or $G$ since the triangulation is face generic and has no Klein square, so we have the following cases:
\begin{enumerate}
\item[A] $(1\mid1\mid-1\mid-1)$
\item[B] $(1,1 \mid -1 \mid -1 )$  or $(-1,-1\mid1\mid1)$                                                  
\item[C] $(1, -1\mid1\mid-1)$
\item[D] $(-1,-1\mid1,1)$
\item[E] $(1,1,-1\mid-1)$ or $(-1,-1,1\mid1)$
\end{enumerate}

Let $x$ be a 2--quad type solution to the $Q$--matching equations with support in two distinct tetrahedra, $\sigma_a$ and $\sigma_b.$ After possibly scaling and relabelling the tetrahedra, we may assume that $x(q)=1$ and $x(p)=t,$ where $q<\sigma_a,$ $p<\sigma_b$ and $0 < |t| \le 1.$ To simplify notation, we let $s = |t|.$ Note that according to Lemma~\ref{lem:1-quad type} and Proposition~\ref{pro:2-quad type in 1 tet}, neither $\sigma_a$ nor $\sigma_b$ supports a 1--quad type or 2--quad type solution, and $q<\sigma_a$ and $p<\sigma_b$ are of partition-type $A$, $B$, $C$, $D$ or $E.$ If $q$ is of partition-type $X$ and $p$ is of partition-type $Y,$ we will say that $x$ is a partition-type $(X,Y)$ solution. We will now determine the possible partition-types $(X,Y),$ as well as the corresponding weights $(1, s).$ 

We first consider the possible weights on the edges of $\sigma_b.$ It follows from the $Q$--matching equations, that the sum of all weights at each edge has to equal zero. Since the partition type takes the number of pairwise distinct edges and their weights into account, in order to determine which types can possibly balance other types, one needs to determine how many edges have non-zero weight. This is given by the \emph{reduced partition types of quadrilaterals}, where only non-zero weight sums are counted:
\begin{enumerate}
\item[$A'$] $(1\mid1\mid-1\mid-1)$
\item[$B'$] $(2\mid-1\mid-1)$  or $(-2\mid1\mid1)$                                                  
\item[$C'$] $(1\mid-1)$
\item[$D'$] $(-2\mid2)$
\item[$E'$] $(1\mid-1)$ 
\end{enumerate}
Since no further identifications are allowed between the edges of a given square, it follows that the only potential partition-types $(X, Y)$ for the solution $x$ are 
\begin{enumerate}
\item[] $(X,X),$ where $X \in \{ A, B, C, D, E\},$ or
\item[] $(X, Y),$ where $X, Y \in \{C, D, E\}$ and $X\neq Y.$
\end{enumerate}

We now narrow down the profile of \emph{some} of the partition-types using the five hypotheses of Theorem~\ref{thm:cluster}. Our strategy is to consider the twisted squares $S_q \subset \sigma_a$ and $S_p \subset \sigma_b$ dual to $q<\sigma_a$ and $p<\sigma_b$ respectively. 


\emph{Claim 0: \ For each of the partition types $(A,A),$ $(B,B),$ $(C,C),$ $(C,D),$ $(D,C)$ there is (up to combinatorial equivalence) a unique identification between the edges of $S_q$ and $S_p$  as shown in Figure~\ref{fig:companion-types}. Moreover, the type $(C, E)$ does not occur.}

\begin{figure}[h]
  \begin{center}
      \subfigure[(A, A)]{\includegraphics[height=2.9cm]{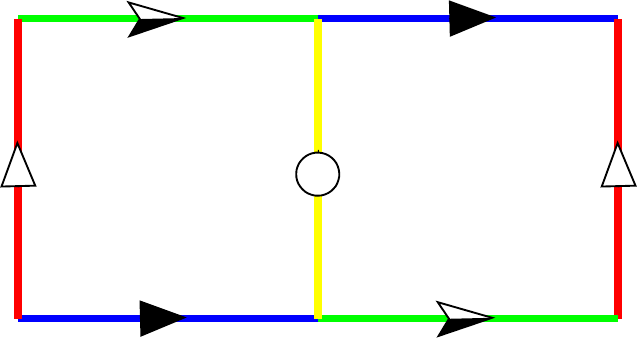}\label{fig:companion-types-a}}
    \qquad   \qquad
    \subfigure[(B, B)]{\label{fig:companion-types BB}\includegraphics[height=2.9cm]{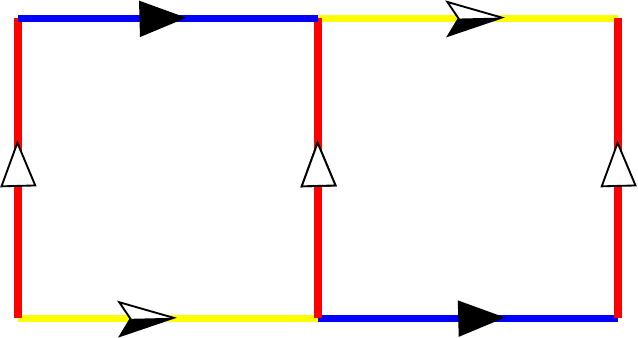}\label{fig:companion-types-b}}
 \\
     \subfigure[(C, C)]{\includegraphics[height=2.9cm]{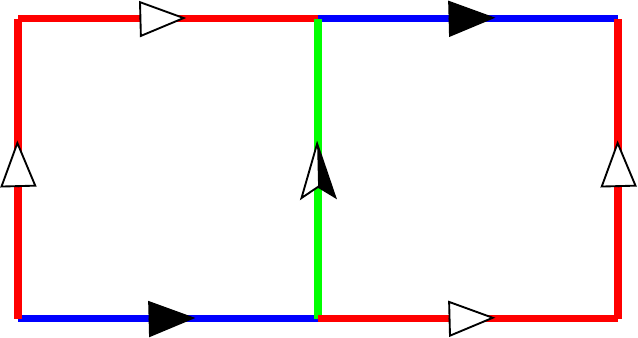}\label{fig:companion-types-c}}
   \qquad   \qquad
 \subfigure[(C, D)]{\includegraphics[height=2.9cm]{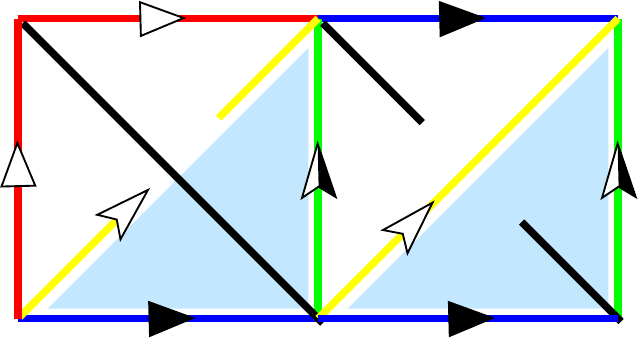}\label{fig:companion-types-d}}
          \end{center}
 \caption{Shown tetrahedra are coherently oriented in each pair (but not necessarily between different pairs); edges of distinct colours are distinct; except the black edges could be any colour (restrictions apply) and do not need to be of the same colour; the shaded faces are identified.}
 \label{fig:companion-types}
\end{figure}

We will prove the claim one partition type at a time:

$(A, A)$: Each of the squares $S_q$ and $S_p$ has four pairwise distinct edges in $M.$ The signs of the corners of $q$ and $p$ sum to zero at each edge in $M,$ and hence each edge in $S_q$ is identified with a unique edge of $S_p$ and the signs at these edges are opposite. In particular, the union of $S_q$ and $S_p$ gives a capped 2-square surface $S.$ Due to hypothesis (5), $S$ must be orientable. Lemma~\ref{lem:capped 2-square surface} states that $-1\le \chi(S)\le 2,$ and hence $S$ is a torus or sphere. Since the signs of corners of $p$ and $q$ cancel in pairs and alternate around the squares, there are only two possible pictures, a hexagonal torus and a sphere. The hexagonal torus is shown in Figure~\ref{fig:companion-types-a}, and we will now show that the case of a sphere does not occur. 

In this case, the identifications of the edges of the squares are as shown in Figure~\ref{fig:companion-types-AA}. 
We may assume that $\sigma_a$ is the left-hand tetrahedron in each of Figure~\ref{fig:companion-types-a-s-1} and \ref{fig:companion-types-a-s-2} and oriented as shown, and there are two possibilities for the orientation of $\sigma_b.$ Definition~\ref{defn:face-pair-reduced} implies that the two faces shaded in Figure~\ref{fig:companion-types-a-s-1} must be identified, which is not possible in an oriented 3--manifold. 
Applying Definition~\ref{defn:face-pair-reduced} to Figure~\ref{fig:companion-types-a-s-2} using the red and green edges forces the two shaded faces to be identified. The remaining "back faces" are similarly identified using the blue and yellow edges. The same reasoning applies to the front faces. Thus, the triangulation of $M$ consists of exactly two tetrahedra (it is in fact a triangulation of $S^3$ with 4 vertices and 6 edges), contradicting the fact that we have at least 3 tetrahedra. 

\begin{figure}[h]
  \begin{center}
      \subfigure[]{\includegraphics[height=2.9cm]{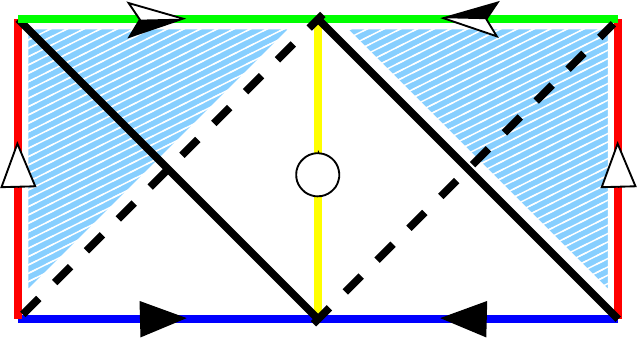}\label{fig:companion-types-a-s-1}}
    \qquad   \qquad
    \subfigure[]{\label{fig:companion-types BB}\includegraphics[height=2.9cm]{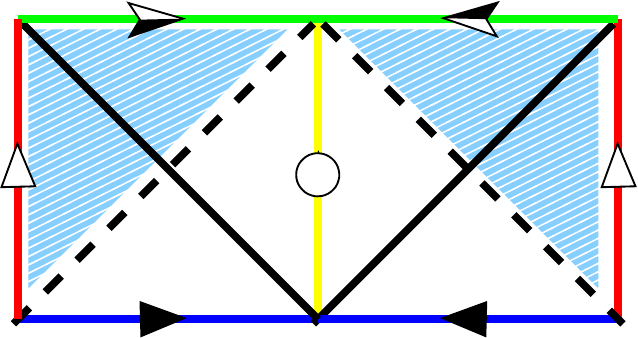}\label{fig:companion-types-a-s-2}}
          \end{center}
 \caption{Shown tetrahedra for $(A,A)$ are coherently oriented in each pair; edges of distinct colours are distinct, except the black edges could be any colour (restrictions apply) and do not need to be of the same colour.}
 \label{fig:companion-types-AA}
\end{figure}

$(B, B)$: If each of the squares $S_q$ and $S_p$ is of type $B,$ then their reduced partition types must be $(2\mid-1\mid-1)$ and $(-2\mid1\mid1),$ with the edges of weights $2$ and $-2$ identified. A square of type $B$ is either an  annulus or a M\"obius square. This gives three main cases. 

Suppose that one square is a M\"obius square and the other is an annulus square. Up to combinatorial equivalence, this gives two possibilities, shown in Figures~\ref{fig:immersed squares-k} and \ref{fig:immersed squares-l}.

Next, suppose that both squares are M\"obius squares. Up to combinatorial equivalence, this gives three possibilities. Two of these possibilities are shown in Figures~\ref{fig:immersed squares-i} and \ref{fig:immersed squares-j}. The remaining is shown in Figure~\ref{fig:companion-types-bb-mo}. One now argues as in case $(A,A)$: there are two possible cases depending on the orientations of $\sigma_a$ and $\sigma_b.$ Definition~\ref{defn:face-pair-reduced} again implies that one of these cannot occur due to the orientability of $M,$ and the other forces the four faces of $\sigma_a$ and $\sigma_b$ to be identified, giving a two tetrahedron triangulation. We already have the desired contradiction, but would like to point out that the four face pairings imply that all squares in the identification space are of type A, contradicting the fact that $S_q$ and $S_p$ are of type $B.$ So this situation cannot occur in a face-generic, face-pair reduced, orientable 3--manifold triangulation.

\begin{figure}[h]
  \begin{center}
    \subfigure[Two M\"obius squares]{\label{fig:companion-types BB}\includegraphics[height=2.9cm]{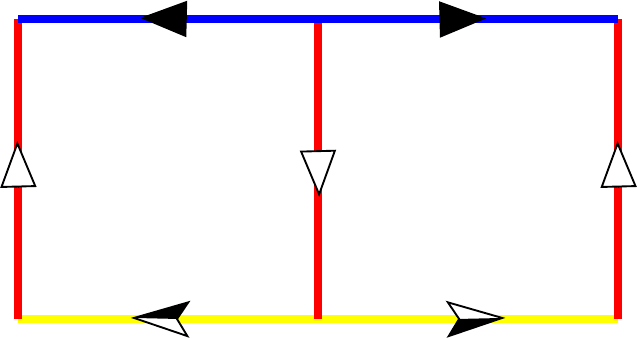}\label{fig:companion-types-bb-mo}}
     \qquad   \qquad   
     \subfigure[Two annulus squares]{\includegraphics[height=2.9cm]{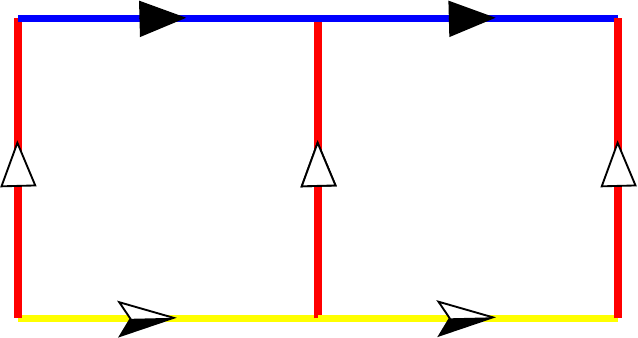}\label{fig:companion-types-bb-an}}
          \end{center}
 \caption{Shown are the squares for the remaining possibilities for $(B,B),$ which cannot occur in a face-generic, face-pair reduced, orientable 3--manifold triangulation.}
 \label{fig:companion-types-BB}
\end{figure}

Last, suppose both squares are annulus squares. Up to combinatorial equivalence, this gives four possibilities. Two of these possibilities are shown in Figures~\ref{fig:immersed squares-g} and \ref{fig:immersed squares-h}. The third is shown in Figure~\ref{fig:companion-types-bb-an} and, exactly as above, ruled out with Definition~\ref{defn:face-pair-reduced}. This only leaves the one possibility shown in Figure~\ref{fig:companion-types-b}.

$(C, C)$: Each square of type $C$ is topologically a M\"obius band, possibly with some vertices identified. Since a square of type $C$ has reduced partition type $(1\mid -1),$ each of the edges of degree one in one square is identified with a degree one edge of opposite sign of the other square. If this is the only identification between edges of the squares, then 
the union of $S_q$ and $S_p$ is a \emph{non-orientable} pinched 2-square surface (either a Klein bottle or $P^2\# P^2\# P^2$). We therefore have a non-orientable capped 2-square surface in $M,$ contradicting hypothesis (5).

The only other possibility is that, in addition, the degree two edges in $S_q$ and $S_p$ are also identified, so the 2-complex made up of $S_q$ and $S_p$ has one degree 4 edge and two degree 2 edges. Up to combinatorial equivalence, there are six possibilities. Four of these are shown in Figures~\ref{fig:immersed squares-a}--\ref{fig:immersed squares-d}. The fifth possibility is shown in Figure~\ref{fig:companion-types-cc}, and, as above, Definition~\ref{defn:face-pair-reduced} implies that this is not possible. 
The last possibility is shown in Figure~\ref{fig:companion-types-c}.

\begin{figure}[h]
  \begin{center}
  \includegraphics[height=2.9cm]{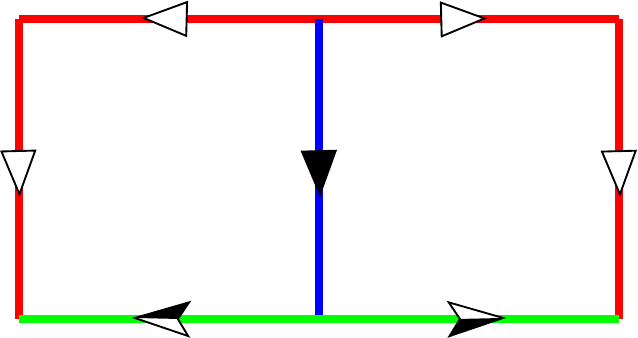}\label{fig:companion-types-cc-cont}
          \end{center}
 \caption{Shown are the squares for the fifth possibility for $(C,C),$ which cannot occur in a face-generic, face-pair reduced, orientable 3--manifold triangulation.}
 \label{fig:companion-types-cc}
\end{figure}

$(C, D)$ or $(D,C)$: Lemma~\ref{lem:G-F-D square lemma}  and hypothesis (1) imply that the square of type $D$ is a torus square. So up to combinatorial equivalence, there is only one possibility for the identification of the two squares. Moreover,  Definition~\ref{defn:face-pair-reduced} implies that two faces of $\sigma_a$ and $\sigma_b$ are identified and thus, up to orientation, there is only one possibility, shown in Figure~\ref{fig:companion-types-d}. The face identification yields an identification of two \emph{diagonals}. Definition~\ref{defn:face-pair-reduced} also implies that this diagonal cannot be identified with any of the edges in the square of type $D,$ and face-generic implies that it can also not be identified with the core edge of the M\"obius square of type $C.$

$(C, E)$: This case does not occur: the only two possibilities are shown in Figures~\ref{fig:immersed squares-e} and \ref{fig:immersed squares-f}, giving either a Klein bottle or $P^2\# P^2\# P^2.$


\emph{Claim 1: \ The tetrahedron $\sigma_0$ is of type $AAA$, $AAC$ or $ACC.$}

To prove the claim, suppose that at least one of the 2--quad type solutions is of type $B$ in $\sigma_0.$ According to \emph{Claim 0}, the only possibility for a companion for a type $B$ solution is type $B$ and as shown in Figure~\ref{fig:companion-types BB}. In particular, the squares of type $B$ must be annuli. Now we assume that there are three 2--quad type solutions, and their types are given by the type of $\sigma_0.$ Refer to the types of tetrahedra in Figure~\ref{fig:special-tet-types}. In ABB, BBC, sBBD, sBDE, one square of type $B$ is a M\"obius square. Hence $\sigma_0$ is of type $ABE.$

We know that a companion for a square of type $A$ (respectively $B$) is also of type $A$ (respectively $B$). Suppose that the companion square of type $A$ is supported by $\sigma_1,$ and the companion square of type $B$ is supported by $\sigma_2.$ 

If $\sigma_1 \neq \sigma_2,$ then (using the classification from \emph{Claim 0}), the square of type $A$ in $\sigma_1,$ the square of type $B$ in $\sigma_2$ and the square of type $E$ in $\sigma_0$ form a 2--complex that is combinatorially equivalent to the one shown in Figure~\ref{fig:square-triple-a}, contradicting the hypothesis of Theorem~\ref{thm:cluster}.

\begin{figure}[h]
  \begin{center}
      \subfigure[Shown are $\sigma_0,$ $\sigma_1$ and part of the vertex link for one possible orientation of $\sigma_1.$]{\includegraphics[height=3.7cm]{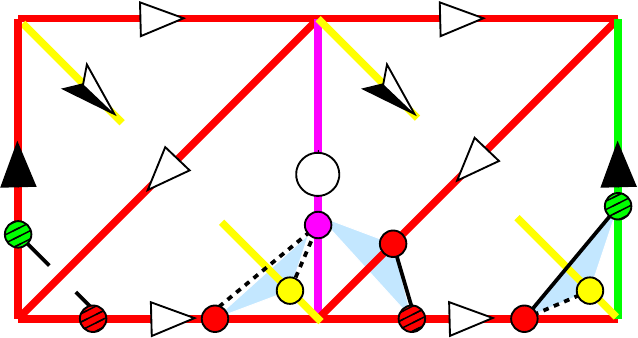}
       \qquad\qquad
          \includegraphics[height=3.3cm]{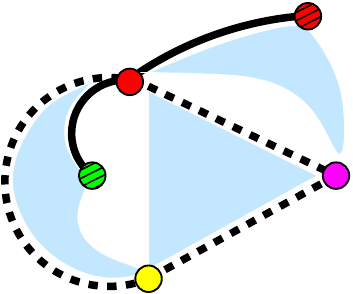}}
    \\
    \subfigure[The same as above for the other possible orientation of $\sigma_1.$]{\includegraphics[height=3.7cm]{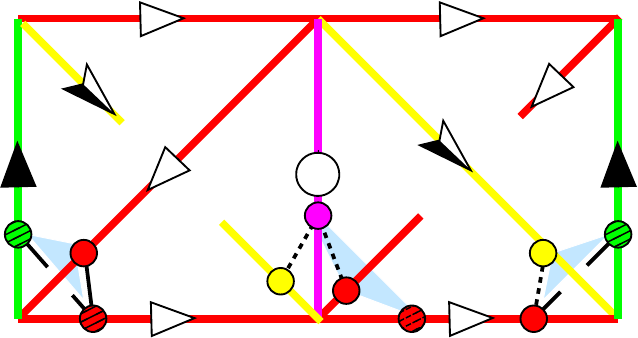}
          \qquad\qquad
          \includegraphics[height=3.3cm]{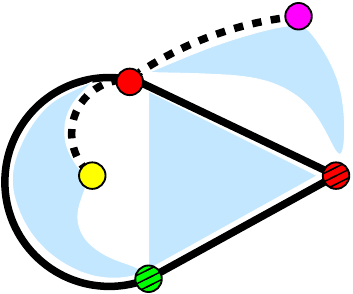}
    }
          \end{center}
 \caption{(ABE) The two possible orientations for the companion of a tetrahedron of type ABE. In each case, the indicated normal arcs form two simple closed curves that meet transversely in one point on the vertex link (the shaded triangles in the link are viewed from the vertex). } 
 \label{fig:ABEfinalcases}
\end{figure}

Hence $\sigma_1=\sigma_2.$ Without loss of generality, we may assume that the tetrahedron $\sigma_0$ of type $ABE$ is oriented as in Figure~\ref{fig:special-tet-types}. Since $\sigma_1$ contains squares of type $A$ and $B,$ this tetrahedron is either of type $ABE$ or of type $ABB,$ due to the classification in Figure~\ref{fig:special-tet-types}. However, if $\sigma_1$ is of type $ABB,$ then the two squares of type $B$ in $\sigma_1$ meet in a single edge $e$ in $M,$ and this edge is not contained in the square of type $A$ in $\sigma_1.$ However, in $\sigma_0$ this edge is in the intersection of the squares of types $A$ and $B.$ Whence $\sigma_1$ is of type $ABE.$ 

We need to distinguish two cases, depending on whether $\sigma_1$ is oriented as in Figure~\ref{fig:special-tet-types} or whether it is oppositely oriented. These cases are shown in Figure~\ref{fig:ABEfinalcases} (a) and (b) respectively. Examining the normal curves indicated in Figure~\ref{fig:ABEfinalcases} shows that the dashed arcs form a simple closed loop on the vertex link, and so do the solid arcs, but these two loops meet transversely in a single point. This contradicts the fact that the vertex link is a sphere. It follows that none of the 2--quad type solutions is of type $B$ in $\sigma_0,$ leaving only the three stated possibilities for the type of $\sigma_0.$


\emph{Claim 2: \ The tetrahedron $\sigma_0$ is not of type $ACC.$}

Suppose $\sigma_0$ is of type $ACC;$ we may assume that it is oriented as shown in Figure~\ref{fig:special-tet-types-ACC}. We know that a companion for a square of type $A$ is again of type $A.$ Supposing that the companion square of type $A$ is supported by $\sigma_1,$ there are two possible orientations for $\sigma_1.$ In either case, a M\"obius face of $\sigma_0$ has two distinct edges in common with a face of $\sigma_1.$ Since the triangulation is face-pair reduced, the faces must be identified. Since face pairings are orientation reversing, this only leaves one of the two possible orientations of $\sigma_1.$  Again applying the fact that the triangulation is face-pair reduced, one finds another pair of faces that is be identified. Examining the resulting vertex link shows that it cannot be a sphere; refer to Figure~\ref{fig:ACCfinalcase}.

\begin{figure}[h]
  \begin{center}
      \includegraphics[height=3.7cm]{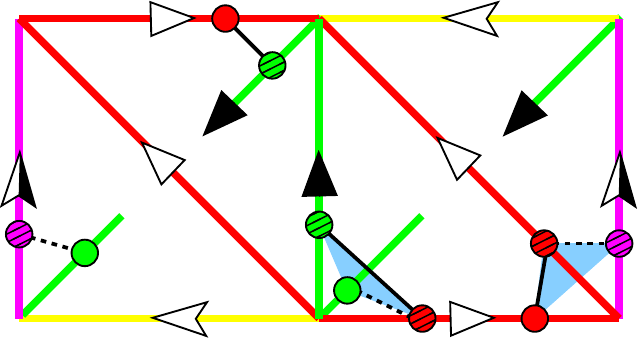}
      \qquad\qquad
          \includegraphics[height=3.3cm]{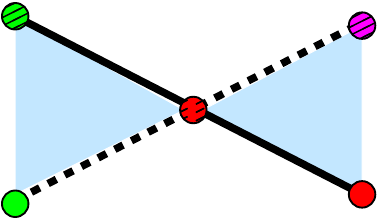}
          \end{center}
 \caption{Shown is the only possibility for the companion of a tetrahedron of type ACC. The indicated normal arcs form two simple closed curves that meet transversely in one point on the vertex link. This can be seen by viewing the shaded triangles from the vertex; as the two remaining normal arcs do not intersect in the vertex link, the latter cannot be a sphere.} 
 \label{fig:ACCfinalcase}
\end{figure}


\emph{Claim 3: \ The tetrahedron $\sigma_0$ is not of type $AAC.$}

Suppose $\sigma_0$ is of type $AAC;$ we may assume that it is oriented as shown in Figure~\ref{fig:special-tet-types-AAC}. We know that a companion for a square of type $C$ is of type $C$ or $D.$

First suppose that the companion square is also of type $C$ and supported by $\sigma_1.$ Also suppose that one of the squares of type $A$ has companion square supported by $\sigma_1.$ For either of the two choices of the square of type $A,$ this forces $\sigma_1$ to be of type $AAC$ also. 
Moreover, $\sigma_0$ and $\sigma_1$ then have M\"obius faces with the same core and boundary edges, and so these faces must be identified. This forces the orientation of $\sigma_1.$ 
Examining the signs of quadrilaterals, it follows that we may also suppose that the companion of the other square of type $A$ is supported by $\sigma_1.$ The result of this discussion is shown in Figure~\ref{fig:AAC_C comp C}. Examining the vertex linking triangles contradicts the fact that the vertex link is a sphere.

\begin{figure}[h]
  \begin{center}
      \includegraphics[height=3.7cm]{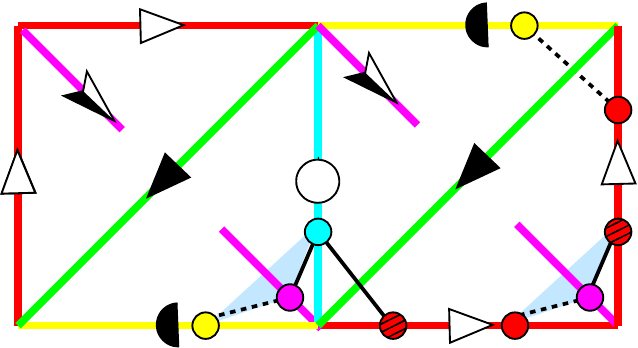}
      \qquad\qquad
         \includegraphics[height=3.3cm]{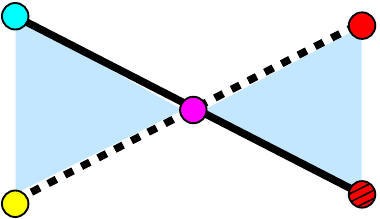}
          \end{center}
 \caption{Shown is the first possibility in case AAC. The indicated normal arcs form two simple closed curves that meet transversely in one point on the vertex link. The shaded triangles are viewed from the vertex; as the two additional normal arcs do not intersect in the vertex link, the latter cannot be a sphere.} 
 \label{fig:AAC_C comp C}
\end{figure}

Hence if the square of type $C$ has a companion square of type $C,$ then one of the squares of type $A$ has companion square supported by $\sigma_2\neq \sigma_1,$ and the other by $\sigma_3\neq \sigma_1.$ If $\sigma_2\neq \sigma_3,$ then the three companion squares are identified as shown in Figure~\ref{fig:square-triple-b}, contradicting our hypothesis. Hence $\sigma_2= \sigma_3.$ The 
union of the two squares of type $A$ in $\sigma_0$ is incident with exactly five pairwise distinct edges in $M$ and hence cannot have the same companion square in $\sigma_2.$ Moreover, by \emph{Claim 1}, $\sigma_2$ must be of type $AAC.$ Examining the signs of quadrilaterals, it follows that we may replace 
the solution $x_1$ by a solution $x'_1$ which is supported by squares of type $C$ in $\sigma_0$ and $\sigma_2.$ But this is again the same situation of the previous paragraph, where all three solutions are supported in two tetrahedra.

It follows that the companion square for $C$ is of type $D.$ This in particular implies that $\sigma_2 \neq \sigma_1$ and $\sigma_3\neq \sigma_1,$ since a tetrahedron containing a square of type $D$ has no square of type $A.$ If $\sigma_2 = \sigma_3,$ then as above, there is a solution $x'_1$ which is supported by squares of type $C$ in $\sigma_0$ and $\sigma_2,$ hence putting us into the above case. Whence $\sigma_2 \neq \sigma_3.$ The fact that the triangulation is face-pair reduced yields identifications between pairs of faces of the tetrahedra in such a way that not all of them can be orientation reversing; contradicting our hypothesis that the triangulation is oriented (see Figure~\ref{fig:AACfinalcase}). So AAC cannot happen.

\begin{figure}[h]
  \begin{center}
      \includegraphics[height=9cm]{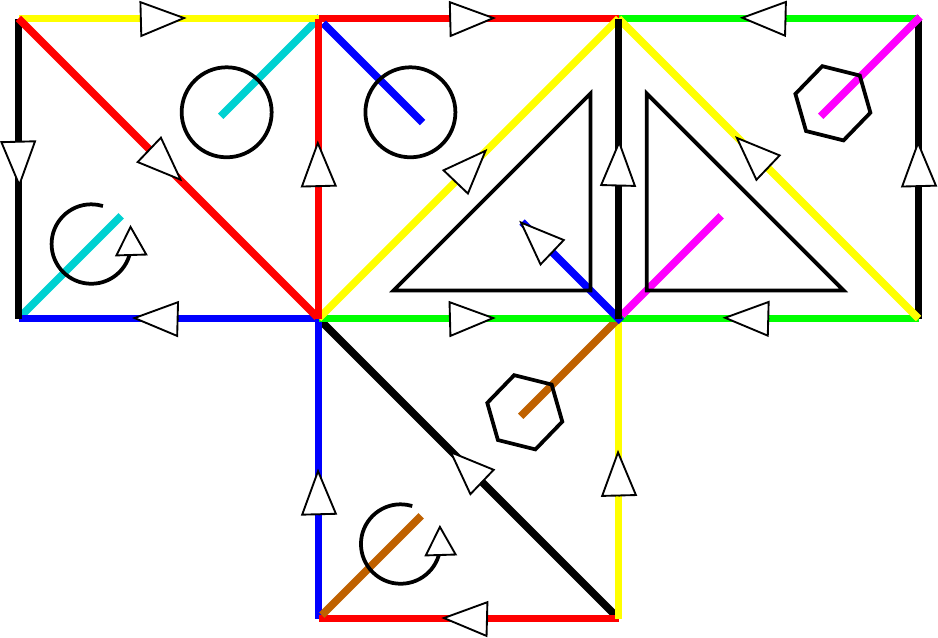}
          \end{center}
 \caption{Shown are $\sigma_0$ in the middle, with $\sigma_1$ on the right, $\sigma_3$ on the left and $\sigma_2$ below. The edge identifications force identifications of faces, and hence orientations of the tetrahedra. For instance, given an orientation of $\sigma_0,$ starting with the pair of faces labelled with a circle determines the orientation of $\sigma_3.$ The pair of faces labelled with a triangle determined the orientation of $\sigma_1.$ Then the faces labelled with hexagons determine the orientation of $\sigma_2.$ But then the faces labelled with a circular arc cannot be identified by an orientation reversing face pairing.} 
 \label{fig:AACfinalcase}
\end{figure}


\emph{Claim 4: \ The tetrahedron $\sigma_0$ is not of type $AAA.$}

First notice that again we may assume that either $\sigma_1=\sigma_2=\sigma_3,$ or else the three tetrahedra may be assumed to be pairwise distinct. In case they are pairwise distinct, the three companion squares give the capped projective plane shown in Figure~\ref{fig:square-triple-c}.

Hence $\sigma_1=\sigma_2=\sigma_3.$ There are two cases to distinguish depending on the orientation. The first case cannot occur as the vertex link is a sphere; see Figure~\ref{fig:AAAfirstfinal}. In the other case, two squares can be surgered to give a non-separating torus; see Figure~\ref{fig:AAAsecondfinal}.

\begin{figure}[h]
  \begin{center}
      \includegraphics[height=3.7cm]{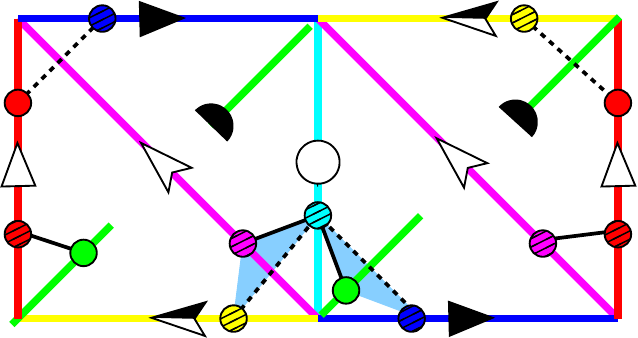}
       \qquad\qquad
         \includegraphics[height=3.7cm]{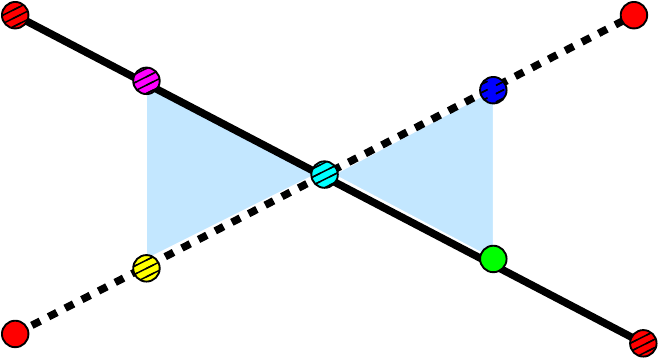}
          \end{center}
 \caption{Shown is one possible orientation for the companion of a tetrahedron of type AAA. The indicated normal arcs form two simple closed curves that meet transversely in one point on the vertex link. The transverse intersection can be seen by viewing the shaded triangles from the vertex link.} 
 \label{fig:AAAfirstfinal}
\end{figure}
\begin{figure}[h]
  \begin{center}
      \includegraphics[height=3.7cm]{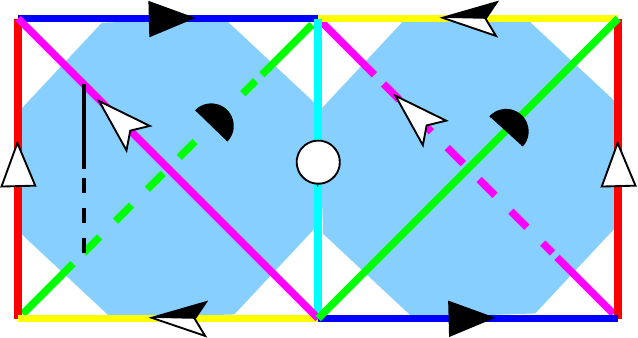}
          \end{center}
 \caption{Shown is the other possible orientation for the companion of a tetrahedron of type AAA. The capped 2--square torus can be viewed as made up of two octagons and two discs on a vertex link. The shown black arc meets the torus transversely and its endpoints can be joined on the vertex link by a path disjoint from the torus; thus certifying that the torus is non-separating.} 
 \label{fig:AAAsecondfinal}
\end{figure}

The four contradictory claims complete the proof of Theorem~\ref{thm:cluster}.
\end{proof}




\address{Feng Luo,\\ Department of Mathematics,\\ Rutgers University,\\ New Brunswick, NJ 08854, USA\\
(fluo@math.rutgers.edu)\\--}

\address{Stephan Tillmann,\\ School of Mathematics and Statistics F07,\\ The University of Sydney,\\ NSW 2006 Australia\\
(tillmann@maths.usyd.edu.au)}

\Addresses


\end{document}